\documentclass[a4paper, 12pt]{amsart}

\usepackage{amsmath}
\usepackage{amssymb}
\usepackage{amsthm}
\usepackage[mathscr]{eucal}

\newcommand{\ABH}{\operatorname{ABH}}
\newcommand{\ANR}{\operatorname{ANR}}
\newcommand{\cat}{\operatorname{cat}}
\newcommand{\CAT}{\operatorname{CAT}}
\newcommand{\GCBA}{\operatorname{GCBA}}
\newcommand{\LGC}{\operatorname{LGC}}

\newcommand{\id}{\operatorname{id}}
\newcommand{\R}{\operatorname{\mathbb{R}}}
\newcommand{\Z}{\operatorname{\mathbb{Z}}}
\newcommand{\N}{\operatorname{\mathbb{N}}}
\newcommand{\Sph}{\operatorname{\mathbb{S}}}

\newcommand{\Haus}{\operatorname{\mathcal{H}}}
\numberwithin{equation}{section}

\theoremstyle{plain}
\newtheorem{thm}{Theorem}[section]
\newtheorem{lem}[thm]{Lemma}  
\newtheorem{prop}[thm]{Proposition}

\theoremstyle{definition}

\newtheorem{exmp}{Example}[section]
\newtheorem{prob}{Problem}[section]

\theoremstyle{remark}
\newtheorem{rem}{Remark}[section]

\begin{document} 

\title
[Volume pinching theorems for CAT(1) spaces]
{Volume pinching theorems for \\ CAT(1) spaces}

\author
[Koichi Nagano]{Koichi Nagano}

\thanks{
Partially supported
by JSPS KAKENHI Grant Numbers 26610012, 21740036, 18740023,
and by the 2004 JSPS Postdoctoral Fellowships for Research Abroad.}

\address
[Koichi Nagano]
{\endgraf 
Institute of Mathematics, University of Tsukuba
\endgraf
Tennodai 1-1-1, Tsukuba, Ibaraki, 305-8571, Japan}

\email{nagano@math.tsukuba.ac.jp}

\date{January 15, 2021}

\keywords
{$\CAT(1)$ space, Sphere theorem, Homology manifold}
\subjclass
[2010]{53C20, 53C23, 57P05}

\begin{abstract}
We examine volume pinching problems of $\CAT(1)$ spaces.
We characterize a class of compact geodesically complete $\CAT(1)$ spaces
of small specific volume.
We prove a sphere theorem 
for compact $\CAT(1)$ homology manifolds of small volume.
We also formulate a criterion of manifold recognition for homology manifolds
on volume growths under an upper curvature bound.
\end{abstract}

\maketitle


.
\section{Introduction}

\subsection{Backgrounds}

Many problems of pinching theorems, including sphere theorems, 
on various metric invariants
have attracted our interests in global Riemannian geometry.
In this paper,
we examine volume pinching problems of $\CAT(1)$ spaces
as a subsequent study of the series of the works of Lytchak and the author 
\cite{lytchak-nagano1} and \cite{lytchak-nagano2}.

For every metric space with an upper curvature bound,
all the spaces of directions are $\CAT(1)$.
Lytchak and the author have proved 
a \emph{local topological regularity theorem}
\cite[Theorem 1.1]{lytchak-nagano2}:
A locally compact metric space with an upper curvature bound
is a topological $n$-manifold
if and only if
all the spaces of directions are homotopy equivalent to an $(n-1)$-sphere.
Once we would establish a sphere theorem for $\CAT(1)$ spaces,
we could obtain an infinitesimal characterization of topological manifolds
for spaces with an upper curvature bound.

We say that a triple of points in a $\CAT(1)$ space is a 
\emph{tripod}
if the three points have pairwise distance at least $\pi$.
Lytchak and the author have invented 
a \emph{capacity sphere theorem} 
\cite[Theorem 1.5]{lytchak-nagano2} 
for $\CAT(1)$ spaces:
If a compact, geodesically complete $\CAT(1)$ space admits no tripod,
then it is homeomorphic to a sphere.

Throughout this paper,
we denote by $\Sph^n$ the standard unit $n$-sphere,
and by $T$ the discrete metric space 
consisting of three points with pairwise distance $\pi$.
For instance,
the spherical join
$\Sph^{n-1} \, \ast \, T$ is a compact, geodesically complete
$\CAT(1)$ space containing the tripod $T$.

We say that a separable metric space is 
\emph{purely $n$-dimensional}
if every non-empty open subset has finite (Lebesgue) covering dimension $n$.
We denote by $\Haus^n$ the $n$-dimensional Hausdorff measure.
If $X$ is a purely $n$-dimensional, compact, 
geodesically complete $\CAT(1)$ space,
then $\Haus^n(X) \ge \Haus^n(\Sph^n)$;
the equality holds if and only if
$X$ is isometric to $\Sph^n$ (\cite[Lemma 3.1 and Proposition 7.1]{nagano1});
moreover, if $\Haus^n(X)$ is sufficiently close to $\Haus^n(\Sph^n)$,
then $X$ is bi-Lipschitz homeomorphic to $\Sph^n$ (\cite[Theorem 1.10]{nagano1}).
Lytchak and the author have proved 
a \emph{volume sphere theorem} 
\cite[Theorem 8.3]{lytchak-nagano2} 
for $\CAT(1)$ spaces:
If a purely $n$-dimensional, compact, geodesically complete
$\CAT(1)$ space $X$ satisfies 
\begin{equation}
\Haus^n(X) < \frac{3}{2} \Haus^n(\Sph^n),
\tag{$\ast$}
\label{eqn: volsphthma}
\end{equation}
then $X$ is homeomorphic to $\Sph^n$.

In the volume sphere theorem \cite[Theorem 8.3]{lytchak-nagano2}, 
the pureness on the dimension is essential 
since we can construct counterexamples
possessing lower dimensional subsets.
The assumption \eqref{eqn: volsphthma} of $\Haus^n$ is optimal
since the spherical join $\Sph^{n-1} \, \ast \, T$ satisfies
$\Haus^n(\Sph^{n-1} \, \ast \, T) = (3/2) \Haus^n(\Sph^n)$.

\subsection{Main results}

We construct a $\CAT(1)$ $n$-sphere admitting a tripod
whose $n$-dimensional Hausdorff measure is equal to 
$(3/2) \Haus^n(\Sph^n)$.

\begin{exmp}\label{exmp: triplex}
The spherical join $\Sph^{n-2} \ast T$
can be represented by the quotient metric space
$\bigsqcup_{i=1,2,3} \Sph_{+, i}^{n-1} / \sim$
obtained by gluing three closed unit $(n-1)$-hemispheres $\Sph_{+, i}^{n-1}$
along their boundaries 
$\partial \Sph_{+, i}^{n-1} = \partial \Sph_{+, j}^{n-1}$.
For $i = 1, 2, 3, 3+1=1$, 
let $\Sigma_i^{n-1}$ be
the isometrically embedded unit $(n-1)$-spheres 
$\Sph_{+, i}^{n-1} \sqcup \Sph_{+, i+1}^{n-1} / \sim$ in $\Sph^{n-2} \ast T$
obtained by the relation
$\partial \Sph_{+, i}^{n-1} = \partial \Sph_{+, i+1}^{n-1}$.
We take three copies of closed unit $n$-hemispheres 
$\Sph_{+, i}^n$, $i = 1, 2, 3$.
Let $X$ be the quotient metric space obtained as
\[
X := (\Sph^{n-2} \ast T) \sqcup \bigl( \bigsqcup_{i=1,2,3} \Sph_{+, i}^n \bigr) / \sim
\]
by attaching $\Sph_{+, i}^n$ to $\Sph^{n-2} \ast T$
along $\Sigma_i^{n-1} = \partial \Sph_{+, i}^n$ for each $i \in \{ 1, 2, 3 \}$.
We call $X$ the 
\emph{$n$-triplex}.
The $n$-triplex $X$ is a purely $n$-dimensional, compact, geodesically complete
$\CAT(1)$ space that is homeomorphic to $\Sph^n$.
This space has a tripod and satisfies
$\Haus^n(X) = (3/2) \Haus^n(\Sph^n)$.
We notice that the $1$-triplex is by definition a circle of length $3\pi$.
\end{exmp}

As one of the main results,
we prove the following characterization:

\begin{thm}\label{thm: critical}
Let $X$ be a purely $n$-dimensional, compact, 
geodesically complete $\CAT(1)$ space.
If $X$ satisfies 
\begin{equation}
\Haus^n(X) = \frac{3}{2} \Haus^n(\Sph^n),
\label{eqn: criticala}
\end{equation}
then $X$ is either
homeomorphic to $\Sph^n$ or
isometric to $\Sph^{n-1} \ast T$.
If in addition $X$ has a tripod,
then $X$ is isometric to either the $n$-triplex
or $\Sph^{n-1} \ast T$.
\end{thm}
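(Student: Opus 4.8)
The plan is to split the argument according to whether $X$ contains a tripod, obtaining the first dichotomy as a by-product of the sharper analysis in the tripod case. If $X$ admits no tripod, then the capacity sphere theorem \cite[Theorem~1.5]{lytchak-nagano2} applies at once and $X$ is homeomorphic to $\Sph^n$, realizing the first alternative. Thus the whole content is concentrated in the case that $X$ carries a tripod, where I aim to prove the stronger conclusion that $X$ is isometric to the $n$-triplex or to $\Sph^{n-1} \ast T$; since the $n$-triplex is homeomorphic to $\Sph^n$, the first statement follows in this case as well.

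So fix a tripod $\{p_1, p_2, p_3\}$, so that $d(p_i, p_j) \ge \pi$ for $i \ne j$. The first step is a sharp volume lower bound with rigidity. I would attach to each $p_i$ the closed half-ball $\bar{B}(p_i, \pi/2)$, which is contained in the nearest-point cell of $p_i$, and observe that these three regions are pairwise essentially disjoint: if $x \in \bar{B}(p_i, \pi/2) \cap \bar{B}(p_j, \pi/2)$, then $\pi \le d(p_i, p_j) \le d(p_i, x) + d(x, p_j) \le \pi$, forcing $d(p_i, p_j) = \pi$ and placing $x$ on the $(n-1)$-dimensional bisector, a set of $\Haus^n$-measure zero. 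The crux is then to show $\Haus^n(\bar{B}(p_i, \pi/2)) \ge \tfrac{1}{2} \Haus^n(\Sph^n)$ for each $i$; summing and using essential disjointness gives $\Haus^n(X) \ge \tfrac{3}{2} \Haus^n(\Sph^n)$, the mechanism underlying the volume sphere theorem \cite[Theorem~8.3]{lytchak-nagano2}. Under the critical equality \eqref{eqn: criticala} this forces $\Haus^n(\bar{B}(p_i, \pi/2)) = \tfrac{1}{2} \Haus^n(\Sph^n)$ for each $i$ and forces the three half-balls to cover $X$ up to measure zero.

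I expect this lower bound, and above all its rigidity, to be the \textbf{main obstacle}, since it does not follow from the Bishop--Gromov-type volume monotonicity, which only controls volumes from above; it must instead exploit geodesic completeness together with the base-case rigidity of \cite[Lemma~3.1 and Proposition~7.1]{nagano1}, namely $\Haus^{n-1}(\Sigma_p) \ge \Haus^{n-1}(\Sph^{n-1})$ with equality only for $\Sph^{n-1}$. Granting the bound, the equality case should propagate through the $\CAT(1)$ comparison: saturation of the Jacobian estimate $\le \sin^{n-1}$ along every geodesic issuing from $p_i$, together with $\Haus^{n-1}(\Sigma_{p_i}) = \Haus^{n-1}(\Sph^{n-1})$, ought to force each region $\bar{B}(p_i, \pi/2)$ to be isometric to the round closed hemisphere $\Sph_+^n$, with boundary an isometrically embedded round $\Sph^{n-1}$. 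Thus $X$ is exhibited as a union $H_1 \cup H_2 \cup H_3$ of three round hemispheres with disjoint interiors, glued to one another only along their boundary spheres $\partial H_i \cong \Sph^{n-1}$.

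The final step classifies the admissible gluings, and here I would argue by induction on $n$ through the spaces of directions along the branching locus. Off the set of non-manifold points the space is assembled from round hemispheres, so the combinatorics is governed by how the boundary spheres meet. At a branch point $x$ the three hemispheres meet, and the directions into $H_1, H_2, H_3$ exhibit $\Sigma_x$ as a union of three round $(n-1)$-hemispheres carrying a tripod, hence of critical volume $\tfrac{3}{2} \Haus^{n-1}(\Sph^{n-1})$; since $\Sigma_x$ is again a purely $(n-1)$-dimensional, compact, geodesically complete $\CAT(1)$ space, the inductive hypothesis gives that $\Sigma_x$ is isometric to the $(n-1)$-triplex or to $\Sph^{n-2} \ast T$. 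The second case occurs exactly when the branching locus is a codimension-one round $\Sph^{n-1}$ along which all three hemispheres are glued, whence $X$ is isometric to $\Sph^{n-1} \ast T$; the first case corresponds to a codimension-two branching locus, and propagating the local triplex model over it identifies $X$ with the $n$-triplex. The induction is seeded at $n = 1$, where a purely $1$-dimensional, compact, geodesically complete $\CAT(1)$ space of total length $\tfrac{3}{2}\Haus^1(\Sph^1) = 3\pi$ carrying a tripod is either the circle of length $3\pi$ (the $1$-triplex, with empty branching locus) or the graph $\Sph^0 \ast T$ (with branching locus $\Sph^0$ and $\Sigma_x \cong T$), matching the two alternatives. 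This completes the classification and hence the theorem.
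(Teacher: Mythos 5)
Your opening moves coincide with the paper's: the no-tripod case is settled at once by the capacity sphere theorem \cite[Theorem 1.5]{lytchak-nagano2}, and in the tripod case the critical equality forces $X = \bigcup_{i=1}^{3} B_{\pi/2}(p_i)$ with each $B_{\pi/2}(p_i)$ isometric to a closed unit $n$-hemisphere. However, what you single out as the \emph{main obstacle} --- the bound $\Haus^n\bigl(B_{\pi/2}(p_i)\bigr) \ge \tfrac12 \Haus^n(\Sph^n)$ with rigidity --- is not an obstacle at all: it is exactly the Bishop--G\"unther-type comparison with rigidity from \cite{nagano1}, quoted in the paper as Proposition \ref{prop: abvolcomp} and applied verbatim. (Also, in the $\CAT(\kappa)$ setting the Bishop--Gromov-type monotonicity of Proposition \ref{prop: relvolcomp} \emph{does} give lower bounds, since the ratio function takes values in $[1,\infty]$; what genuinely requires Proposition \ref{prop: abvolcomp} is only the equality case.)

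The genuine gap is in your final paragraph, the classification of the gluings, which is precisely where the paper invests essentially all of Section 4. Your induction on $n$ yields only pointwise infinitesimal information: at each point $x$ where all three hemispheres meet, $\Sigma_x X$ is isometric to the $(n-1)$-triplex or to $\Sph^{n-2} \ast T$. The passage from this to the global isometry type --- ``propagating the local triplex model over it identifies $X$ with the $n$-triplex'' --- is asserted, not proved, and it is exactly the hard local-to-global step. Concretely, you would still have to show (i) that the two infinitesimal models cannot mix along the branch locus, (ii) that the branch locus itself is a round $\Sph^{n-2}$ (resp.\ that the three boundary spheres coincide as a round $\Sph^{n-1}$), and (iii) that the three hemispheres are attached in the standard pattern, i.e.\ that $X$ splits off the branch locus as a spherical join factor. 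None of this follows from knowing $\Sigma_x$ at every point: a $\CAT(1)$ space is not determined by its tangent cones, so some local-to-global rigidity input is indispensable. The paper supplies it by a different route: it sets $Y = \bigcup_{i=1}^3 \partial B_{\pi/2}(p_i)$, proves $\pi$-convexity of the $\Sigma_i$ and of their pairwise unions, shows $Y$ is building-like (via the lune lemma of \cite{ballmann-brin}, an additivity lemma for spherical triangles, and the local-to-global theorem of \cite{balser-lytchak}), concludes from \cite{balser-lytchak} that $Y$ is a spherical building, and only then identifies $Y$ with $\Sph^{n-2} \ast T$ or $\Sph^{n-1}$ by Lytchak's spherical join decomposition theorem \cite{lytchak}, whence $X$ is the $n$-triplex or $\Sph^{n-1} \ast T$. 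Your sketch would need an analogous tool (for instance, a suspension/join rigidity theorem applied to the branch locus) to close the inductive step; as written, the induction does not go through.
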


Theorem \ref{thm: critical} for the case of $n \le 2$ 
was proved in \cite{nagano2}.

For $\CAT(1)$ homology manifolds,
one can hope that Theorem \ref{thm: critical}
enables us to relax the condition \eqref{eqn: volsphthma}
in the volume sphere theorem \cite[Theorem 8.3]{lytchak-nagano2}.
We note that
every $\CAT(1)$ homology manifold (without boundary) 
is geodesically complete.

The other main result is the following volume sphere theorem
for $\CAT(1)$ homology manifolds:

\begin{thm}\label{thm: volsphhm}
For every positive integer $n$,
there exists a sufficiently small positive number 
$\delta \in (0,\infty)$ depending only on $n$
such that
if a compact $\CAT(1)$ homology $n$-manifold $X$ satisfies
\begin{equation}
\Haus^n(X) < \frac{3}{2} \Haus^n(\Sph^n) + \delta,
\label{eqn: volsphhma}
\end{equation}
then $X$ is homeomorphic to $\Sph^n$.
\end{thm}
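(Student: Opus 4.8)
The plan is to argue by contradiction, reducing the borderline estimate to the exact critical characterization of Theorem \ref{thm: critical}. Fix $n$ and suppose no admissible $\delta$ exists. Then for each $i$ there is a compact $\CAT(1)$ homology $n$-manifold $X_i$ that is not homeomorphic to $\Sph^n$ and satisfies $\Haus^n(X_i) < \frac{3}{2}\Haus^n(\Sph^n) + 1/i$. By the volume sphere theorem \cite[Theorem 8.3]{lytchak-nagano2}, any space of volume strictly below $\frac{3}{2}\Haus^n(\Sph^n)$ is already a sphere; hence $\Haus^n(X_i) \ge \frac{3}{2}\Haus^n(\Sph^n)$, and therefore $\Haus^n(X_i) \to \frac{3}{2}\Haus^n(\Sph^n)$.

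First I would extract a limit. Since each $X_i$ is geodesically complete with a uniform volume bound, the structure theory of \cite{lytchak-nagano1} supplies a uniform diameter bound and Gromov--Hausdorff precompactness, so after passing to a subsequence $X_i \to Y$ with $Y$ a compact, geodesically complete $\CAT(1)$ space. The decisive point is that this convergence is non-collapsed: the space of directions at every point of $X_i$ is a purely $(n-1)$-dimensional, compact, geodesically complete $\CAT(1)$ space, hence has volume at least $\Haus^{n-1}(\Sph^{n-1})$ by \cite[Lemma 3.1]{nagano1}, so the volume density is everywhere at least $1$ and, by the monotonicity of ball volumes, small metric balls have uniformly bounded-below volume. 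I would use this to conclude that $Y$ is purely $n$-dimensional and geodesically complete and that $\Haus^n$ is continuous along the sequence, so $\Haus^n(Y) = \frac{3}{2}\Haus^n(\Sph^n)$. Applying Theorem \ref{thm: critical} then forces the dichotomy: either $Y$ is homeomorphic to $\Sph^n$, or $Y$ is isometric to $\Sph^{n-1} \ast T$.

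To finish I would eliminate both alternatives, the second by an induction on $n$. Let $\Phi(m)$ denote the assertion that no non-collapsed Gromov--Hausdorff limit of compact $\CAT(1)$ homology $m$-manifolds is isometric to $\Sph^{m-1} \ast T$; this holds vacuously for $m=1$, where $\CAT(1)$ homology $1$-manifolds are circles of length at least $2\pi$ whose non-collapsed limits are again such circles, never the non-manifold $\Sph^{0} \ast T$. Assuming $\Phi(n-1)$, I rule out $Y \cong \Sph^{n-1} \ast T$ as follows: choose $x_i \in X_i$ converging to a point $y$ on the singular equator, where $\Sigma_y Y = \Sph^{n-2} \ast T$. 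By convergence of spaces of directions under non-collapsed limits, the $(n-1)$-dimensional $\CAT(1)$ homology manifolds $\Sigma_{x_i} X_i$ converge to $\Sph^{n-2} \ast T$ with volumes tending to $\frac{3}{2}\Haus^{n-1}(\Sph^{n-1})$, contradicting $\Phi(n-1)$; this establishes $\Phi(n)$ and excludes the join. In the remaining case $Y$ is homeomorphic to $\Sph^n$, hence a homology manifold, and a topological stability result for non-collapsed limits of $\CAT(1)$ homology manifolds whose limit is a manifold (via \cite{lytchak-nagano1} and \cite{lytchak-nagano2}) gives that $X_i$ is homeomorphic to $Y \cong \Sph^n$ for large $i$, contradicting the choice of $X_i$.

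The main obstacle I anticipate is the analytic control that makes the limit well-behaved: showing that the density lower bound genuinely forces non-collapse and the \emph{continuity} (not merely lower semicontinuity) of $\Haus^n$, and that spaces of directions converge compatibly so that the inductive step closes. Closely tied to this is the topological stability step in the spherical case, which must promote Gromov--Hausdorff closeness to an actual homeomorphism. Both rely on the fine structure theory of \cite{lytchak-nagano1} and \cite{lytchak-nagano2} and form the technical heart of the argument.
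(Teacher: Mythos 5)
Your opening moves coincide with the paper's: argue by contradiction, use \cite[Theorem 8.3]{lytchak-nagano2} to force $\Haus^n(X_i)\to\frac32\Haus^n(\Sph^n)$, extract a Gromov--Hausdorff limit $Y$ via precompactness (Proposition \ref{prop: precpt}), use Lemmas \ref{lem: stab}, \ref{lem: npure} and Theorem \ref{thm: volconv} to see that $Y$ is purely $n$-dimensional, geodesically complete, of volume exactly $\frac32\Haus^n(\Sph^n)$, and then apply Theorem \ref{thm: critical}. But both of your concluding steps have genuine gaps. First, your elimination of $\Sph^{n-1}\ast T$ rests on ``convergence of spaces of directions under non-collapsed limits.'' No such statement exists in \cite{lytchak-nagano1} or \cite{lytchak-nagano2}, and it is false: Gromov--Hausdorff convergence, even with convergence of volumes, does not force $\Sigma_{x_i}X_i\to\Sigma_yY$ for $x_i\to y$. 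For instance, smooth surfaces of nonpositive curvature (all of whose spaces of directions are round circles of length $2\pi$) converge without collapse to a cone of angle $>2\pi$, whose space of directions at the apex is a circle of length $>2\pi$. So your induction does not close. It is also unnecessary: \cite[Lemma 3.3]{lytchak-nagano2} (Lemma \ref{lem: stabcathm} in the paper) says directly that any Gromov--Hausdorff limit of compact $\CAT(\kappa)$ homology $n$-manifolds is again a homology $n$-manifold, and $\Sph^{n-1}\ast T$ is not one; this single citation discards the join, with no induction and no convergence of directions.

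Second, and more seriously, your final step invokes ``a topological stability result for non-collapsed limits of $\CAT(1)$ homology manifolds whose limit is a manifold.'' No such result is available: the homeomorphism stability of \cite[Theorems 1.3 and 7.5]{lytchak-nagano2} is proved for sequences of \emph{Riemannian} manifolds, while for homology manifolds the Grove--Petersen--Wu/Petersen theory (Theorem \ref{thm: gpw}) yields only a homotopy equivalence $X_i\simeq\Sph^n$. In general a homology $n$-manifold homotopy equivalent to $\Sph^n$ need not be homeomorphic to it (e.g.\ the suspension of a non-simply-connected homology sphere), so whether this implication holds for $\CAT(1)$ homology manifolds is precisely the theorem being proved; you cannot outsource it to a stability theorem. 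The paper bridges this gap with a mechanism your proposal lacks: the Bishop--Gromov monotonicity (Proposition \ref{prop: relvolcomp}) together with Lemma \ref{lem: volgrowth} transfers the global volume bound to every space of directions $\Sigma_xX_i$, which by Proposition \ref{prop: hm} is a compact $\CAT(1)$ homology $(n-1)$-manifold; the homotopy sphere theorem (Proposition \ref{prop: volhomsphhm}) applied \emph{one dimension down} makes each $\Sigma_xX_i$ a homotopy $(n-1)$-sphere; and the local topological regularity theorem \cite[Theorem 1.1]{lytchak-nagano2} then shows each $X_i$ is a topological $n$-manifold (this is Theorem \ref{thm: vgtopreghm}). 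Only after this upgrade does the homotopy-theoretic information (via Proposition \ref{prop: volsphmfd}, or via Theorem \ref{thm: gpw} plus the resolutions of the Poincar\'e conjecture) yield a homeomorphism to $\Sph^n$. Note the structural contrast with your plan: the paper's ``dimension reduction'' is an application of the volume hypothesis to spaces of directions of the $X_i$ themselves, not an induction over limits of sequences, and that is what makes it rigorous.
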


Theorem \ref{thm: volsphhm} is new even for Riemannian manifolds.
We notice that 
a complete Riemannian manifold is $\CAT(1)$ 
if and only if 
it has sectional curvature $\le 1$ and injectivity radius $\ge \pi$.

\begin{rem}\label{rem: aftervolsphhm}
Let $M$ be a simply connected, compact,
$(2n)$-dimensional Riemannian manifold 
of positive sectional curvature $\le 1$.
Due to the Klingenberg estimate of injectivity radii,
we see that $M$ has injectivity radius $\ge \pi$;
in particular, $M$ is $\CAT(1)$.
By the sphere theorem of Coghlan and Itokawa \cite{coghlan-itokawa},
we know that 
if $\Haus^{2n}(M) \le (3/2) \Haus^{2n}(\Sph^{2n})$,
then $M$ is homeomorphic to $\Sph^{2n}$.
In the sphere theorem of Coghlan and Itokawa \cite{coghlan-itokawa},
the condition on the volume was relaxed by Wu \cite{jywu},
and by Wen \cite{ywen1, ywen2}
under lower sectional curvature bounds.
In the proofs in \cite{jywu} and in \cite{ywen1, ywen2},
the assumptions of the lower sectional curvature bounds 
for Riemannian manifolds is essential.
\end{rem}

In the proof of Theorem \ref{thm: volsphhm}, 
we need a criterion of manifold recognition for homology manifolds
on volume growths.
For $\kappa \in \R$,
we denote by $M_{\kappa}^n$
the simply connected, complete Riemannian $n$-manifold
of constant curvature $\kappa$. 
Let $D_{\kappa}$ denote
the diameter of $M_{\kappa}^n$.
For $r \in (0,D_{\kappa})$,
we denote by $\omega_{\kappa}^n(r)$ 
the $n$-dimensional Hausdorff measure of any metric ball in $M_{\kappa}^n$
of radius $r$ if $n \ge 2$,
and by $\omega_{\kappa}^1(r)$ 
the $1$-dimensional Hausdorff measure of $[-r,r]$.
From the local topological regularity theorem \cite[Theorem 1.1]{lytchak-nagano2}
and the volume sphere theorem \cite[Theorem 8.3]{lytchak-nagano2},
we deduce a
\emph{local topological regularity theorem on volume growths}:
Let $X$ be a locally compact, geodesically complete $\CAT(\kappa)$ space,
and let $W$ be a purely $n$-dimensional open subset of $X$.
If for every $x \in W$ there exists $r \in (0,D_{\kappa})$ satisfying
$\Haus^n \bigl( B_r(x) \bigr) / \omega_{\kappa}^n(r) < 3/2$,
then $W$ is a topological $n$-manifold,
where $B_r(x)$ is the closed metric ball of radius $r$ centered at $x$
(see Theorem \ref{thm: vgtopreg}).

As one of the key ingredients in the proof of Theorem \ref{thm: volsphhm}, 
we provide the following criterion of manifold recognition:

\begin{thm}\label{thm: vgtopreghm}
For every positive integer $n$,
there exists a sufficiently small
positive number $\delta \in (0,\infty)$ depending only on $n$
with the following property:
Let $X$ be a $\CAT(\kappa)$ homology $n$-manifold,
and let $W$ be an open subset of $X$.
If for every $x \in W$ there exists $r \in (0,D_{\kappa})$ satisfying
\begin{equation}
\frac{\Haus^n \bigl( B_r(x) \bigr)}{\omega_{\kappa}^n(r)} < \frac{3}{2} + \delta,
\label{eqn: vgtopreghma}
\end{equation}
then $W$ is a topological $n$-manifold.
\end{thm}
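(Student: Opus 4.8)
The plan is to follow the scheme of the local topological regularity theorem on volume growths (Theorem \ref{thm: vgtopreg}), verifying at every point of $W$ the hypothesis of the local topological regularity theorem \cite[Theorem 1.1]{lytchak-nagano2}: namely that the space of directions $\Sigma_x X$ is homotopy equivalent to $\Sph^{n-1}$ for each $x \in W$. Since $W$ is open in $X$, it is itself a locally compact $\CAT(\kappa)$ space whose spaces of directions agree with those of $X$, so this forces $W$ to be a topological $n$-manifold. The new feature, compared with Theorem \ref{thm: vgtopreg}, is that the homology-manifold hypothesis lets us replace the threshold volume sphere theorem \cite[Theorem 8.3]{lytchak-nagano2} by Theorem \ref{thm: volsphhm}, which tolerates volume slightly above $(3/2)\Haus^{n-1}(\Sph^{n-1})$; this is exactly what produces the extra room $\delta$. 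Accordingly I would argue by induction on $n$ --- the cases $n \le 1$ being immediate --- assuming Theorem \ref{thm: volsphhm} available in dimension $n-1$ with its constant $\delta_{n-1} \in (0,\infty)$.

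First I would record the infinitesimal geometry at a point $x \in W$. Since $X$ is a $\CAT(\kappa)$ homology $n$-manifold, the space of directions $\Sigma_x X$ is a compact $\CAT(1)$ space, and the structure theory of spaces of directions in geodesically complete spaces with an upper curvature bound --- via the identification of the local homology of $X$ at $x$ with the reduced homology of $\Sigma_x X$ shifted by one --- exhibits $\Sigma_x X$ as a $\CAT(1)$ homology $(n-1)$-manifold with the homology of $\Sph^{n-1}$; as such it is geodesically complete. Next I would use the monotonicity of the normalized volume ratio $r \mapsto \Haus^n\bigl(B_r(x)\bigr)/\omega_{\kappa}^n(r)$ for $\CAT(\kappa)$ spaces, together with its limit
\[
\lim_{r \to 0}\frac{\Haus^n\bigl(B_r(x)\bigr)}{\omega_{\kappa}^n(r)} = \frac{\Haus^{n-1}(\Sigma_x X)}{\Haus^{n-1}(\Sph^{n-1})},
\]
which holds because a small $r$-ball is asymptotically the $r$-ball in the Euclidean cone over $\Sigma_x X$ while $\omega_{\kappa}^n(r)$ is asymptotic to the Euclidean value. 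As the ratio is monotone non-decreasing, its value at any radius dominates the limit; feeding in \eqref{eqn: vgtopreghma} at the radius $r$ furnished by the hypothesis gives
\[
\frac{\Haus^{n-1}(\Sigma_x X)}{\Haus^{n-1}(\Sph^{n-1})} < \frac{3}{2} + \delta.
\]

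It then remains to choose $\delta = \delta_n$ small enough that $\delta_n \, \Haus^{n-1}(\Sph^{n-1}) \le \delta_{n-1}$, so that the last display rearranges to $\Haus^{n-1}(\Sigma_x X) < (3/2)\Haus^{n-1}(\Sph^{n-1}) + \delta_{n-1}$. Theorem \ref{thm: volsphhm} in dimension $n-1$, applied to the compact $\CAT(1)$ homology $(n-1)$-manifold $\Sigma_x X$, now yields that $\Sigma_x X$ is homeomorphic, hence homotopy equivalent, to $\Sph^{n-1}$. Since $x \in W$ was arbitrary, \cite[Theorem 1.1]{lytchak-nagano2} shows $W$ is a topological $n$-manifold.

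The step I expect to be most delicate is the infinitesimal one: establishing rigorously that $\Sigma_x X$ is a compact $\CAT(1)$ homology $(n-1)$-manifold, and pinning down the monotonicity and the density formula, so that the volume-growth bound imposed at a single (possibly large) radius can be transferred into the sharp volume bound on $\Sigma_x X$ that triggers Theorem \ref{thm: volsphhm}. Once the reduction to $\Sigma_x X$ is in place, selecting $\delta_n$ from $\delta_{n-1}$ and running the induction is routine.
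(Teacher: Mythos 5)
Your skeleton is the same as the paper's: monotonicity of $r \mapsto \Haus^n\bigl(B_r(x)\bigr)/\omega_{\kappa}^n(r)$ (Proposition \ref{prop: relvolcomp}) plus the density formula (Theorem \ref{thm: infvolconv}) --- packaged in the paper as Lemma \ref{lem: volgrowth} --- converts \eqref{eqn: vgtopreghma} into $\Haus^{n-1}(\Sigma_xX) < (3/2+\delta)\,\Haus^{n-1}(\Sph^{n-1})$; Proposition \ref{prop: hm} exhibits $\Sigma_xX$ as a compact $\CAT(1)$ homology $(n-1)$-manifold; a sphere theorem then makes $\Sigma_xX$ homotopy equivalent to $\Sph^{n-1}$, and \cite[Theorem 1.1]{lytchak-nagano2} concludes. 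Where you genuinely diverge is in which sphere theorem you apply to $\Sigma_xX$: you invoke Theorem \ref{thm: volsphhm} one dimension down, whereas the paper proves and uses Proposition \ref{prop: volhomsphhm}, a \emph{homotopy} sphere theorem for compact $\CAT(1)$ homology manifolds, established directly by a Gromov--Hausdorff compactness argument (Proposition \ref{prop: precpt}, Theorem \ref{thm: volconv}, Theorem \ref{thm: critical}, Lemma \ref{lem: stabcathm}, Theorem \ref{thm: gpw}) with no reference to Theorem \ref{thm: vgtopreghm} or Theorem \ref{thm: volsphhm}.

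This difference is not cosmetic, because in the paper Theorem \ref{thm: volsphhm} is itself deduced \emph{from} Theorem \ref{thm: vgtopreghm} in the same dimension; citing it as a black box inside the proof of Theorem \ref{thm: vgtopreghm} would therefore be circular. Your induction on $n$ is the correct repair, and it does close up: the recursion (Theorem \ref{thm: vgtopreghm} in dimension $n$) $\leftarrow$ (Theorem \ref{thm: volsphhm} in dimension $n-1$) $\leftarrow$ (Theorem \ref{thm: vgtopreghm} in dimension $n-1$) $\leftarrow \cdots$ is well-founded, since Theorem \ref{thm: volsphhm} in dimension $m$ needs only Theorem \ref{thm: vgtopreghm} in dimension $m$ together with the independently proved Proposition \ref{prop: volsphmfd}. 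But to be rigorous you must carry \emph{both} statements through a simultaneous induction --- re-deriving Theorem \ref{thm: volsphhm} in dimension $m$ at each stage --- rather than assume it ``available'' in dimension $n-1$, since nothing in your argument rules out that its proof uses dimension $n$; that check is what the paper's architecture makes unnecessary. The trade-off: the paper's route is dimension-free and needs only homotopy equivalence, while your route proves the stronger conclusion that $\Sigma_xX$ is homeomorphic to $\Sph^{n-1}$ at the cost of importing, through Proposition \ref{prop: volsphmfd} and Proposition \ref{prop: hs}, the resolutions of the (generalized) Poincar\'{e} conjecture in dimension $n-1$, which Proposition \ref{prop: volhomsphhm} avoids at this step.
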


\subsection{Outline}

The organization of this paper is as follows:
In Section 2,
we recall the known basic properties
of metric spaces with an upper curvature bound.
In Section 3,
we deduce the local topological regularity theorem 
(Theorem \ref{thm: vgtopreg})
on volume growths
mentioned above.

In Section 4,
we prove Theorem \ref{thm: critical}.
Due to the capacity sphere theorem for $\CAT(1)$ spaces
\cite{lytchak-nagano2},
it suffices to consider the case where  
$X$ is a purely $n$-dimensional, compact, geodesically complete $\CAT(1)$ space 
with \eqref{eqn: criticala} admitting a tripod.
By the volume rigidity of Bishop-G\"{u}nther type \cite{nagano1},
the space $X$ consists of three unit $n$-hemispheres.
Observing how the hemispheres meet each other,
we obtain the conclusion.
When we determine the geometric structure,
we use the volume sphere theorem for $\CAT(1)$ spaces
\cite{lytchak-nagano2}.

In Section 5,
we prove Theorems \ref{thm: volsphhm} and \ref{thm: vgtopreghm}
by contradiction.
To achieve the tasks,
we use Theorem \ref{thm: critical},
the local topological regularity theorem \cite{lytchak-nagano2},
and the volume convergence theorem \cite{nagano1}.

\subsection{Problem}

As a natural question beyond Theorem \ref{thm: volsphhm}, 
we pose the following volume pinching problem 
for $\CAT(1)$ spaces:

\begin{prob}\label{prob: pinching-problem}
Let $n \ge 2$.
Let $R_n$ be the supremum of $R \in (3/2,\infty)$
for which every $\CAT(1)$ homology $n$-manifold $X$
with $\Haus^n(X) / \Haus^n(\Sph^n) \le R$
is homeomorphic to $\Sph^n$.
\begin{enumerate}
\item
Find the concrete value $R_n$.
\item
Describe all compact $\CAT(1)$ homology $n$-manifolds $X$
satisfying $\Haus^n(X) / \Haus^n(\Sph^n) = R_n$
in the maximal critical case.
\end{enumerate}
\end{prob}

This problem seems to be interesting even for 
Riemannian manifolds.

A Riemannian manifold $M$ is said to be a 
\emph{$C_l$-manifold}
if every geodesic in $M$ is contained in a periodic closed geodesic of length $l$;
in this case,
the Riemannian metric of $M$ is called a 
\emph{$C_l$-metric}.
For any $n$-dimensional $C_{2\pi}$-manifold $M$,
the volume ratio $\Haus^n(M) / \Haus^n(\Sph^n)$
is an integer (\cite[Theorem A]{weinstein}),
called the \emph{Weinstein integer for $M$}
(see \cite[Theorem 2.21]{besse}).
We know the concrete values of the Weinstein integers for 
compact symmetric spaces of rank one
with the standard $C_{2\pi}$-metric
(see \cite[VI.7]{berger} and \cite[2.23]{besse}).

Every compact symmetric space of rank one with the standard $C_{2\pi}$-metric
is $\CAT(1)$.
The number $R_n$ in Problem \ref{prob: pinching-problem}
is not greater than the Weinstein integers
for the projective spaces with the $C_{2\pi}$-metric.

\subsection*{Acknowledgments}

The author would like to express his gratitude to
Alexander Lytchak for many valuable discussions and comments.
The author would like to thank an anonymous referee
for providing the proof of Proposition \ref{prop: building}
much shorter than a proof in a previous manuscript of this paper
(see Remark \ref{rem: referee}).


\section{Preliminaries}

We refer the readers to 
\cite{alexander-kapovitch-petrunin}, \cite{alexandrov-berestovskii-nikolaev},
\cite{ballmann}, \cite{bridson-haefliger}, \cite{burago-burago-ivanov},
\cite{buyalo-schroeder} 
for the basic facts
on metric spaces with an upper curvature bound.

\subsection{Metric spaces}

We denote by $d$ the metrics on metric spaces.
For $r \in (0,\infty)$, and for a point $p$ in a metric space,
we denote by $U_r(p)$, $B_r(p)$, and $\partial B_r(p)$ 
the open metric ball of radius $r$ centered at $p$,
the closed one, and the metric sphere, respectively.
A metric space is said to be 
\emph{proper}
if every closed metric ball is compact.

For a metric space $X$,
we denote by $C(X)$ the Euclidean cone over $X$.
For metric spaces $Y$ and $Z$,
we denote by $Y \ast Z$ the spherical join of $Y$ and $Z$.
Note that $C(Y \ast Z)$ is isometric to the $\ell^2$-direct product $C(Y) \times C(Z)$.
The spherical join $\Sph^{m-1} \ast \Sph^{n-1}$ is isometric to $\Sph^{m+n-1}$.

For $r \in (0,\infty]$,
a metric space $X$ is said to be
\emph{$r$-geodesic}
if every pair of points $p, q$ with distance $< r$
can be joined by a geodesic $pq$ in $X$,
where a \emph{geodesic} $pq$ means 
the image of an isometric embedding $\gamma \colon [a,b] \to X$
from a closed interval $[a,b]$
with $\gamma(a) = p$ and $\gamma(b) = q$.
A metric space is 
\emph{geodesic}
if it is $\infty$-geodesic.
A geodesic space is proper
if and only if
it is complete and locally compact.

For $r \in (0,\infty]$,
a subset $C$ in a metric space is said to be 
\emph{$r$-convex}
if $C$ itself is $r$-geodesic as a metric subspace, 
and if every geodesic joining two points in $C$
is contained in $C$.
A subset $C$ in a metric space is 
\emph{convex}
if $C$ is $\infty$-convex.

\subsection{CAT$\boldsymbol{(\kappa)}$ spaces}

For $\kappa \in \R$,
a complete metric space $X$ is said to be 
$\CAT(\kappa)$
if $X$ is $D_{\kappa}$-geodesic,
and if every geodesic triangle in $X$ with perimeter $< 2D_{\kappa}$
is not thicker than the comparison triangle in $M_{\kappa}^2$.
Our $\CAT(\kappa)$ spaces are assumed to be complete.
A metric space 
\emph{has an upper curvature bound $\kappa$}
if every point has a $\CAT(\kappa)$ neighborhood.

Let $X$ be a $\CAT(\kappa)$ space.
Every pair of points in $X$ with distance $< D_{\kappa}$
can be uniquely joined by a geodesic.
Let $p \in X$ be arbitrary.
For every $r \in (0,D_{\kappa}/2]$,
the ball $B_r(p)$ is convex.
Along the geodesics emanating from $p$,
for every $r \in (0,D_{\kappa})$
the ball $B_r(p)$ is contractible inside itself.
Every open subset of $X$ is an $\ANR$ (absolute neighborhood retract)
(\cite{ontaneda}, \cite{kramer}).
For $x, y \in U_{D_{\kappa}}(p) - \{p\}$,
we denote by $\angle_p(x,y)$ the angle at $p$
between $px$ and $py$.
Put $\Sigma_p'X := \{ \, px \mid x \in U_{D_{\kappa}}(p) - \{p\} \, \}$.
The angle $\angle_p$ at $p$
is a pseudo-metric on $\Sigma_p'X$.
The 
\emph{space of directions $\Sigma_pX$ at $p$}
is defined as the $\angle_p$-completion of
the quotient metric space $\Sigma_p'X / \angle_p = 0$.
For $x \in U_{D_{\kappa}}(p) - \{p\}$,
we denote by $x_p' \in \Sigma_pX$
the starting direction of $px$ at $p$.
The 
\emph{tangent space $T_pX$ at $p$} 
is defined as the Euclidean cone $C(\Sigma_pX)$
over $\Sigma_pX$.
We denote by $o_p \in T_pX$ the vertex of the cone $T_pX$.
The space $\Sigma_pX$ is $\CAT(1)$,
and the space $T_pX$ is $\CAT(0)$.
In fact,
for a metric space $\Sigma$,
the Eulcidean cone $C(\Sigma)$ is $\CAT(0)$ if and only if $\Sigma$ is $\CAT(1)$.
For metric spaces $Y$ and $Z$,
the spherical join $Y \ast Z$ is $\CAT(1)$
if and only if $Y$ and $Z$ are $\CAT(1)$.

\subsection{Geodesically complete CAT$\boldsymbol{(\kappa)}$ spaces}

We refer the readers to \cite{lytchak-nagano1} 
for the basic properties  of $\GCBA$ spaces, that is,
locally compact, separable, locally geodesically complete 
spaces with an upper curvature bound.
Recall that
a $\CAT(\kappa)$ space is said to be 
\emph{locally geodesically complete}
(or \emph{has geodesic extension property})
if every geodesic defined on a compact interval can be extended to
a local geodesic beyond endpoints.
A $\CAT(\kappa)$ space is said to be 
\emph{geodesically complete}
if every geodesic can be extended to
a local geodesic defined on $\R$.
Every locally geodesically complete $\CAT(\kappa)$ space
is geodesically complete.
The geodesic completeness for compact (resp.~proper) $\CAT(\kappa)$ spaces
is preserved under the (resp.~pointed) Gromov-Hausdorff limit.

Let $X$ be a proper, geodesically complete $\CAT(\kappa)$ space.
For every $p \in X$,
the space $\Sigma_pX$ coincides with the set $\Sigma_p'X$
of all starting directions at $p$.
Moreover, 
$\Sigma_pX$ is compact and geodesically complete,
and $T_pX$ is proper and geodesically complete.
In fact, for a $\CAT(1)$ space $\Sigma$,
the cone $C(\Sigma)$ is geodesically complete 
if and only if $\Sigma$ is geodesically complete and not a singleton.
For $\CAT(1)$ spaces $Y$ and $Z$,
the join $Y \ast Z$ is geodesically complete
if and only if so are $Y$ and $Z$.

\subsection{Dimension of CAT$\boldsymbol{(\kappa)}$ spaces}

Let $X$ be a separable $\CAT(\kappa)$ space.
The (Lebesgue) covering dimension $\dim X$ satisfies
\[
\dim X = 1 + \sup_{p \in X} \dim \Sigma_pX = \sup_{p \in X} \dim T_pX
\]
(\cite{kleiner}).
Let $X$ be proper and geodesically complete.
Every relatively compact open subset of $X$
has finite covering dimension
(see \cite[Subsection 5.3]{lytchak-nagano1}).
The dimension $\dim X$ 
is equal to the Hausdorff dimension of $X$;
moreover, $\dim X$ is equal to the supremum of $m$
such that $X$ admits an open subset $U$ homeomorphic to 
the Euclidean $m$-space $\R^m$
(\cite[Theorem 1.1]{lytchak-nagano1}).
If $\dim X = n$,
then the support of $\Haus^n$ coincides with
the set of all points $x \in X$ with $\dim \Sigma_xX = n-1$
(\cite[Theorem 1.2]{lytchak-nagano1}). 

From the studies in \cite[Subsection 11.3]{lytchak-nagano1}
on the stability of dimension,
we can immediately derive the following three lemmas:

\begin{lem}\label{lem: lstab}
Let $(X_i,p_i)$, $i = 1, 2, \dots$, be a sequence
of pointed proper geodesically complete $\CAT(\kappa)$ spaces.
Assume that
$(X_i,p_i)$ converges to some $(X,p)$
in the pointed Gromov-Hausdorff topology.
Then 
\[
\dim X \le
\liminf_{i \to \infty} \dim X_i.
\]
\end{lem}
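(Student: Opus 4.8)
The plan is to argue by contradiction after a subsequence reduction, thereby isolating the only nontrivial content: that a lower bound on the dimension of the limit is inherited by the approximating spaces. Set $N := \liminf_{i\to\infty}\dim X_i$. If $N = \infty$ there is nothing to prove, so assume $N < \infty$ and pass to a subsequence (not relabeled) along which $\dim X_i = N$ for every $i$. By the stability of geodesic completeness under pointed Gromov--Hausdorff limits recalled above, the limit $X$ is again a proper, geodesically complete $\CAT(\kappa)$ space, so the structural facts recalled in this section apply to it: the dimension formula $\dim X = \sup_{q\in X}\dim T_qX$, the identification of $\dim X$ with the Hausdorff dimension, and the description of the support of $\Haus^n$ from \cite[Theorem 1.2]{lytchak-nagano1}. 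It thus suffices to establish the following claim: \emph{if $(X_i,p_i)\to(X,p)$ and $\dim X \ge n$, then $\dim X_i \ge n$ for all sufficiently large $i$.} Indeed, applying the claim with $n = N+1$ would contradict $\dim X_i = N$, forcing $\dim X \le N$.

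To prove the claim I would first localize the dimension. Since $\dim X = \sup_{q}\dim T_qX$, a lower bound $\dim X \ge n$ is already witnessed at a single point: choose $x\in X$ with $\dim T_xX \ge n$, equivalently $\dim\Sigma_xX \ge n-1$, and fix $R$ so that $B_R(x)$ is compact and lies in the region controlled by the pointed convergence. Blowing up at $x$ realizes the $n$-dimensional tangent cone $T_xX$ as a pointed Gromov--Hausdorff limit of rescalings of $X$ near $x$. Selecting $x_i\in X_i$ with $x_i \to x$, a diagonal argument over scales then lets me compare the infinitesimal geometry of $X_i$ at $x_i$ with that of $X$ at $x$, so that the $n$-dimensionality seen at $x$ should be detectable, for large $i$, already at $x_i$.

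The engine for the transport step is the stability-of-dimension analysis of \cite[Subsection 11.3]{lytchak-nagano1}. In the class of proper, geodesically complete $\CAT(\kappa)$ spaces, a lower bound on the local dimension is encoded by a finite amount of metric data --- a configuration of \emph{extendable} geodesics realizing $n$ near-independent directions at a point. Such a configuration is an open condition on finitely many points and their mutual distances, hence it persists under Gromov--Hausdorff approximation; and by the structure theory its presence forces $\dim\Sigma_{x_i}X_i \ge n-1$, so that $\dim X_i \ge n$, for all large $i$. This would complete the proof of the claim and hence of the lemma.

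The main obstacle is exactly this transport, i.e.\ ruling out a \emph{jump} of dimension in the limit. For arbitrary metric spaces the analogous statement is false --- increasingly dense finite subsets of a square have dimension $0$ yet converge to a $2$-dimensional space --- so the argument must use the hypotheses essentially. Geodesic completeness, which is preserved in the limit, together with the upper curvature bound, is what prevents a high-dimensional limit from arising from low-dimensional approximators: they guarantee that the $n$ independent directions at $x$ are carried by genuine, extendable geodesics with scale-invariant lower bounds on their spread, so that the certifying configuration is robust rather than an artifact of the limit. Making this robustness quantitative, and verifying that the finitary certificate genuinely detects dimension inside each $X_i$, is the crux; it is precisely this content that is packaged in the stability results of \cite[Subsection 11.3]{lytchak-nagano1} which we invoke, making the deduction of the lemma essentially formal.
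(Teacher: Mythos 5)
Your proposal is correct and takes essentially the same route as the paper: both localize the dimension bound at a single point $x\in X$ with $\dim\Sigma_xX\ge n-1$, take $x_i\to x$, and transport the bound to the $X_i$ via the stability-of-dimension results of \cite[Subsection 11.3]{lytchak-nagano1} (the paper cites Lemma 11.5 there), concluding $\dim X_i\ge n$ for large $i$. Your contradiction/subsequence framing and the blow-up narrative are only cosmetic wrappers around the same key step, which in both cases is fully delegated to that cited lemma.
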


\begin{proof}
Assume that for some positive integer $n$
there exists $x_n \in X$ with $\dim \Sigma_{x_n}X = n-1$. 
In this case, we have $\dim X \ge n$.
We can take a sequence 
$x_{n,i} \in X_i$, $i = 1, 2, \dots$, converging to the point $x_n \in X$.
Since $\dim \Sigma_{x_n}X = n-1$,
there exists $r_n \in (0,D_{\kappa})$ such that
we have $\dim U_{r_n}(x_{n,i}) = n$ for all sufficiently large $i$
(\cite[Lemma 11.5]{lytchak-nagano1}).
This implies $n \le \liminf_{i \to \infty} \dim X_i$,
and the lower semi-continuity.
\end{proof}

On the Gromov-Hausdorff topology, we have:

\begin{lem}\label{lem: stab}
Let $X_i$, $i = 1, 2, \dots$, be a sequence
of compact geodesically complete $\CAT(\kappa)$ spaces.
Assume that $X_i$ converges to some $X$
in the Gromov-Hausdorff topology,
then
\[
\lim_{i \to \infty} \dim X_i = \dim X.
\]
\end{lem}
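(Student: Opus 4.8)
The plan is to prove the two one-sided inequalities $\dim X \le \liminf_{i\to\infty}\dim X_i$ and $\limsup_{i\to\infty}\dim X_i \le \dim X$; together they force the limit to exist and to equal $\dim X$. The first inequality is immediate from Lemma~\ref{lem: lstab}: the spaces $X_i$ are compact, hence proper, and their Gromov--Hausdorff convergence to $X$ yields pointed Gromov--Hausdorff convergence $(X_i,p_i)\to(X,p)$ for any choice of basepoints $p_i\to p$, so the lower semi-continuity applies without change. Thus the entire content of the lemma is the reverse inequality, which asserts that no dimensional collapse occurs in the limit.

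To prove $\limsup_{i\to\infty}\dim X_i \le \dim X$, I would argue by contradiction. Write $n:=\dim X$ and suppose, after passing to a subsequence, that $\dim X_i \ge n+1$ for every $i$. By the cone formula $\dim X_i = 1 + \sup_{x}\dim\Sigma_x X_i$ together with the identification of the support of the top-dimensional measure in \cite[Theorem~1.2]{lytchak-nagano1}, for each $i$ there is a point $p_i\in X_i$ with $\dim\Sigma_{p_i}X_i \ge n$, that is, of local dimension at least $n+1$. Since the $X_i$ are uniformly compact, after a further subsequence $p_i \to p$ for some $p\in X$. The goal is then to show $\dim\Sigma_p X \ge n$, which gives $\dim X \ge n+1$ and contradicts $\dim X = n$.

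The decisive step is to pass the local dimension at $p_i$ to the limit point $p$, and this is exactly where geodesic completeness enters. Lower semi-continuity alone is useless here, since it bounds $\dim\Sigma_p X$ from above rather than from below. Instead I would exploit the volume non-collapse guaranteed by geodesic completeness: at a point of local dimension at least $n+1$ in a geodesically complete $\CAT(\kappa)$ space, a Bishop--G\"{u}nther type lower bound from \cite{nagano1} gives $\Haus^{n+1}\bigl(B_r(p_i)\bigr)\ge v(r)$ for all small $r$, with a positive bound $v(r)$ depending only on $\kappa$ and $n$ and not on $i$. Because the geodesic completeness of compact $\CAT(\kappa)$ spaces is preserved under Gromov--Hausdorff limits (as recalled in Section~2), and because the balls $B_r(p_i)$ converge to $B_r(p)$, the volume convergence theorem of \cite{nagano1} together with the stability estimates of \cite[Subsection~11.3]{lytchak-nagano1} transfer this lower bound to the limit, yielding $\Haus^{n+1}\bigl(B_r(p)\bigr) > 0$. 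Hence $\dim X \ge n+1$, the desired contradiction.

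I expect the main obstacle to be precisely this transfer of positive $(n+1)$-dimensional measure across the limit: Gromov--Hausdorff convergence is in general only lower semi-continuous for the dimension, and Hausdorff measure may collapse, so one must use the rigidity supplied by geodesic completeness---both the uniform volume lower bound and the fact that geodesic completeness survives the limit---to foreclose collapse. Once this is secured, combining the two inequalities gives $\limsup_{i\to\infty}\dim X_i \le \dim X \le \liminf_{i\to\infty}\dim X_i$, so that $\lim_{i\to\infty}\dim X_i = \dim X$.
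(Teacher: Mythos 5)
Your overall skeleton coincides with the paper's: reduce to the upper semi-continuity $\limsup_{i\to\infty}\dim X_i \le \dim X$ via Lemma \ref{lem: lstab}, argue by contradiction along a subsequence with $\dim X_j \ge n+1$, pick points $p_j$ with $\dim \Sigma_{p_j}X_j \ge n$, and pass to a limit point $p \in X$. The gap is in your decisive ``transfer'' step, and it is a genuine one. The volume convergence theorem (Theorem \ref{thm: volconv}, i.e.\ \cite[Theorem 1.1]{nagano1}) is stated for a sequence of compact spaces whose dimension \emph{equals} the exponent of the Hausdorff measure being compared: to say anything about $\Haus^{n+1}$ of the limit you would need $\dim X_j = n+1$ for all $j$. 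In your contradiction setup you only know $\dim X_j \ge n+1$; these dimensions may exceed $n+1$, may vary with $j$, and are not a priori uniformly bounded --- uniform boundedness is essentially part of what the lemma asserts --- so you cannot even extract a constant-dimension subsequence by pigeonhole without a further argument (one can be made, e.g.\ a packing bound coming from the $1$-Lipschitz surjections $\Phi_{p_j}$ onto balls in $M_{\kappa}^m$ combined with the uniform total boundedness of a convergent sequence, but you do not give it). In addition, the theorem compares measures of whole compact spaces, not of the balls $B_r(p_j)$ and $B_r(p)$ (repairable via $\Haus^m(X_j)\ge\Haus^m(B_r(p_j))$), and your final inference from $\Haus^{n+1}(B_r(p))>0$ to $\dim X \ge n+1$ silently uses the equality of covering and Hausdorff dimension for the limit, i.e.\ \cite[Theorem 1.1]{lytchak-nagano1} together with geodesic completeness of $X$; for general separable metric spaces the covering dimension is only bounded \emph{above} by the Hausdorff dimension, so this step also needs the citation. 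As written, ``transfer this lower bound to the limit'' is not an argument but a restatement of the non-collapse rigidity to be proved.

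Your parallel appeal to ``the stability estimates of \cite[Subsection 11.3]{lytchak-nagano1}'' points at the actual resolution, and it is exactly what the paper uses: by \cite[Lemma 11.5]{lytchak-nagano1}, since $x_j \to x$ and $\dim \Sigma_x X \le n-1$ (because $\dim X = n$), one gets $\dim \Sigma_{x_j}X_j \le n-1$ for all sufficiently large $j$, contradicting $\dim \Sigma_{x_j}X_j \ge n$ in a single line --- no volumes, no case distinctions on dimensions, no boundedness issue, and it works even if $\dim X_j \to \infty$. So your proposal is in an unstable position: either that stability lemma is doing all the work, in which case the whole volume detour is superfluous and you have reproduced the paper's proof, or the volume argument must stand on its own, in which case the dimension-matching and boundedness gaps above must be filled. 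As written, it does neither.
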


\begin{proof}
By Lemma \ref{lem: lstab},
it is enough to 
show the upper semi-continuity $\limsup_{i \to \infty} \dim X_i \le \dim X$.
We may assume that $\dim X$ is finite. 
Set $n = \dim X$.
Then all the spaces of directions in $X$ have dimension $\le n-1$.
Suppose that
the sequence $X_i$, $i = 1, 2, \dots$, 
has a subsequence $X_j$, $j = 1, 2, \dots$, such that
$\dim X_j \ge n+1$ for all $j$.
Then we can take a sequence $x_j \in X_j$, $j = 1, 2, \dots$, 
such that $\dim \Sigma_{x_j}X_j \ge n$ for all $j$,
and a point $x \in X$ 
to which the sequence $x_j \in X_j$, $j = 1, 2, \dots$, converges.
Since $\dim \Sigma_xX \le n-1$,
we have $\dim \Sigma_{x_j}X_j \le n-1$ for all sufficiently large $j$
(\cite[Lemma 11.5]{lytchak-nagano1}).
This is a contradiction, and proves the upper semi-continuity.
\end{proof}

On the pureness on the dimension, 
we have:

\begin{lem}\label{lem: npure}
Let $(X_i,p_i)$, $i = 1, 2, \dots$, be a sequence
of pointed proper geodesically complete $\CAT(\kappa)$ spaces.
Assume that
$(X_i,p_i)$ converges to some $(X,p)$
in the pointed Gromov-Hausdorff topology.
If each $X_i$ is purely $n$-dimensional,
then so is $X$.
\end{lem}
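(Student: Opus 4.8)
The plan is to reduce the pure $n$-dimensionality of the limit $X$ to a density statement about its top-dimensional points, and then to rule out low-dimensional open balls by the same local stability mechanism (\cite[Lemma 11.5]{lytchak-nagano1}) that drives Lemmas \ref{lem: lstab} and \ref{lem: stab}. First I would record that $\dim X_i = n$ for every $i$, so Lemma \ref{lem: lstab} gives $\dim X \le \liminf_{i\to\infty}\dim X_i = n$; in particular every nonempty open subset of $X$ has dimension at most $n$. Recalling the formula $\dim V = 1 + \sup_{y \in V}\dim\Sigma_y X$ for open $V \subseteq X$ and the identity $\operatorname{supp}\Haus^n = \{\,y \in X \mid \dim\Sigma_y X = n-1\,\}$, it then suffices to prove that every nonempty open ball in $X$ has dimension exactly $n$, equivalently that the top-dimensional points are dense.

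Second, I would argue by contradiction, assuming that some ball $B_\rho(x)$ in $X$ satisfies $\dim B_\rho(x) \le n-1$, so that $\dim\Sigma_y X \le n-2$ for every $y \in B_\rho(x)$. Choosing $x_i \in X_i$ with $x_i \to x$, the pure $n$-dimensionality of $X_i$ forces $\dim U_{\rho/2}(x_i) = n$, whence $U_{\rho/2}(x_i)$ meets $\operatorname{supp}\Haus^n$ and contains a point $z_i$ with $\dim\Sigma_{z_i}X_i = n-1$. By properness the points $z_i$, which lie within $\rho/2$ of $x_i \to x$, admit a subsequence converging to some $z \in B_{\rho/2}(x) \subseteq B_\rho(x)$; hence $\dim\Sigma_z X \le n-2$.

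Finally, I would apply \cite[Lemma 11.5]{lytchak-nagano1} exactly as in the proof of Lemma \ref{lem: stab}, now to the relocated convergent basepoints $(X_i, z_i) \to (X, z)$: since $\dim\Sigma_z X \le n-2$, the local stability yields $\dim\Sigma_{z_i}X_i \le n-2$ for all large $i$, contradicting $\dim\Sigma_{z_i}X_i = n-1$. This contradiction shows that no open ball of $X$ has dimension below $n$, and together with $\dim X \le n$ it gives that every nonempty open subset has dimension $n$, as required. The only delicate point, and the step I expect to carry the weight, is the transfer of the dimension bound through the limit with shifted basepoints via \cite[Lemma 11.5]{lytchak-nagano1}; the remaining ingredients --- the dimension formula, the description of $\operatorname{supp}\Haus^n$, and the extraction of a convergent subsequence from properness --- are routine.
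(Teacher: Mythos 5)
Your proof is correct and takes essentially the same route as the paper's: both arguments reduce the statement to a contradiction that is resolved by transferring a dimension bound on spaces of directions through the convergence via \cite[Lemma 11.5]{lytchak-nagano1}, after first getting $\dim X \le n$ from Lemma \ref{lem: lstab}. The only difference is a harmless detour: you relocate to nearby regular points $z_i$ with $\dim \Sigma_{z_i}X_i = n-1$ and extract a convergent subsequence, whereas the paper observes that \emph{every} point of the purely $n$-dimensional space $X_i$ has $\dim \Sigma_{x_i}X_i = n-1$ (by the local dimension formula), so any sequence $x_i \to x$ yields the contradiction directly, with no subsequence extraction needed.
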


\begin{proof}
Assume that each $X_i$ is purely $n$-dimensional.
Then all the spaces of directions in $X_i$ have dimension $n-1$.
From Lemma \ref{lem: lstab} we derive $\dim X \le n$,
so that all the spaces of directions in $X$ have dimension $\le n-1$.
Moreover, we see $\dim X = n$.
Indeed,
if we would have $\dim X \le n-1$, then we could 
find a point $x_0 \in X$ with $\dim \Sigma_{x_0}X \le n-2$,
and a sequence $x_i \in X_i$, $i = 1, 2, \dots$, 
converging to the point $x_0 \in X$,
so that $\dim \Sigma_{x_i}X_i \le n-2$ for all sufficiently large $i$
(\cite[Lemma 11.5]{lytchak-nagano1}).
Similarly, we see that for every $x \in X$ we have $\dim \Sigma_xX = n-1$.
Therefore $X$ is purely $n$-dimensional too.
\end{proof}

We say that a separable metric space is
\emph{pure-dimensional}
if it is purely $n$-dimensional for some $n$.

We have the following characterization (\cite[Proposition 8.1]{lytchak-nagano2}):

\begin{prop}\label{prop: pure}
\emph{(\cite{lytchak-nagano2})}
Let $X$ be a proper, geodesically complete,
geodesic $\CAT(\kappa)$ space.
Let $W$ be a connected open subset of $X$.
Then the following are equivalent:
\begin{enumerate}
\item
$W$ is pure-dimensional;
\item
for every $p \in W$ the space $\Sigma_pX$ is pure-dimensional;
\item
for every $p \in W$ the space $T_pX$ is pure-dimensional.
\end{enumerate}
\end{prop}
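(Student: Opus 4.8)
The plan is to reduce the statement to the behaviour of the \emph{local dimension} $q \mapsto \dim T_qX = 1 + \dim \Sigma_qX$, and to transport this behaviour to and from the tangent cones by means of the stability Lemmas \ref{lem: lstab} and \ref{lem: npure}. First I would record that, for a proper geodesically complete $\CAT(\kappa)$ space, every nonempty relatively compact open set $U$ satisfies $\dim U = \sup_{q \in U} \dim T_qX$, and that the ball dimensions $\dim U_r(q)$ decrease to $\dim T_qX$ as $r \to 0$. Consequently $W$ is purely $n$-dimensional if and only if the local dimension is constant equal to $n$ on $W$: if $\dim T_{q_0}X = m < n$ for some $q_0 \in W$, then a small ball about $q_0$ is an open set of dimension $m$, contradicting pureness; conversely, constancy of the local dimension forces $\dim U = n$ for every nonempty open $U \subseteq W$. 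This recasts (1) as the assertion that $q \mapsto \dim T_qX$ is constantly $n$ on the connected set $W$.

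Next I would settle the equivalence of (2) and (3), which is purely infinitesimal and needs no connectedness. Since $T_pX = C(\Sigma_pX)$ and, for $t > 0$ and $\xi \in \Sigma_pX$, the tangent space of the cone at $t\xi$ is isometric to $T_\xi\Sigma_pX \times \R$, one has $\dim T_{t\xi}T_pX = 1 + \dim T_\xi\Sigma_pX$, while at the vertex $\dim T_{o_p}T_pX = 1 + \dim\Sigma_pX$. Comparing these values with the first paragraph, $T_pX$ is purely $n$-dimensional exactly when $\Sigma_pX$ is purely $(n-1)$-dimensional. As this holds at each $p \in W$ separately, (2) and (3) are equivalent.

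For the implication (1) $\Rightarrow$ (3) I would pass to the tangent cone as a blow-up limit. Fix $p \in W$ and choose $\lambda_i \to \infty$, so that the rescalings $(X, \lambda_i d, p)$, which are proper geodesically complete $\CAT(\kappa/\lambda_i^2)$ spaces, converge in the pointed Gromov--Hausdorff topology to $(T_pX, o_p)$. Every $v \in T_pX$ is a limit of points $\lambda_i q_i$ with $q_i \to p$, hence with $q_i$ eventually in the purely $n$-dimensional open set $W$; since the argument of Lemma \ref{lem: npure} is local about the basepoints, it yields $\dim T_vT_pX = n$ for each such $v$, so $T_pX$ is purely $n$-dimensional.

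Finally, (3) $\Rightarrow$ (1). Fix $p \in W$ and put $m = \dim T_pX$; by (3) the cone $T_pX$ is purely $m$-dimensional, so $\dim T_vT_pX = m$ for every $v$. On a small ball $U_{r_0}(p)$ one has $\dim U_{r_0}(p) = m$, whence $\dim T_qX \le m$ for all $q$ near $p$; it remains to exclude that $\dim T_qX < m$ for points $q$ arbitrarily close to $p$. If $q_i \to p$ with $\dim T_{q_i}X \le m-1$, then after rescaling by $\lambda_i = 1/d(p,q_i)$ the points $\lambda_i q_i$ subconverge to some $v \in T_pX$ with $|v| = 1$; realizing the iterated tangent cone $T_vT_pX$ as a pointed Gromov--Hausdorff limit of the cones $T_{q_i}X$ and invoking Lemma \ref{lem: lstab}, I would obtain $\dim T_vT_pX \le \liminf_i \dim T_{q_i}X \le m-1$, contradicting $\dim T_vT_pX = m$. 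Thus $\dim T_qX \equiv m$ near $p$, so the integer-valued local dimension is locally constant and therefore constant on the connected set $W$; by the first paragraph, $W$ is purely $n$-dimensional. The main obstacle is precisely this last lower bound: the delicate point is to see that a drop of dimension accumulating at $p$ must persist, as a genuinely lower-dimensional point, in the tangent cone $T_pX$, which is where the stability of dimension of \cite[Subsection 11.3]{lytchak-nagano1} underlying Lemmas \ref{lem: lstab} and \ref{lem: npure} does the real work.
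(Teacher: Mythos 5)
The paper itself gives no proof of this proposition: it is quoted verbatim from \cite[Proposition 8.1]{lytchak-nagano2}, so your attempt can only be judged on its own merits. Your first paragraph (pureness $\Leftrightarrow$ constancy of the local dimension $q \mapsto \dim T_qX$), the equivalence (2)$\Leftrightarrow$(3) via the splitting $\Sigma_{t\xi}C(\Sigma) = \Sph^0 \ast \Sigma_\xi\Sigma$, and the implication (1)$\Rightarrow$(3) are essentially sound; in particular the localization of Lemma \ref{lem: npure} to the blow-up sequence $(X,\lambda_i d, p) \to (T_pX, o_p)$ is legitimate, since every point of $T_pX$ is approximated by points $q_i$ with $d(p,q_i) \to 0$, hence eventually lying in $W$ where pureness is available.

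The genuine gap is in (3)$\Rightarrow$(1), at the step ``realizing the iterated tangent cone $T_vT_pX$ as a pointed Gromov--Hausdorff limit of the cones $T_{q_i}X$.'' This is asserted, not proved, and it is false as a general principle: $T_{q_i}X$ records the geometry of $X$ at scales $o\bigl(d(p,q_i)\bigr)$ around $q_i$, whereas $T_vT_pX$ records the geometry at scales comparable to $d(p,q_i)$; a diagonal argument realizes $T_vT_pX$ only as a limit of rescalings $(X,\mu_i\lambda_i d, q_i)$ with $\mu_i \to \infty$ slowly, and the dimension of those balls is bounded \emph{below}, not above, by $\dim T_{q_i}X$. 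The semicontinuity actually supplied by the stability results behind Lemmas \ref{lem: lstab}--\ref{lem: npure} (\cite[Lemma 11.5]{lytchak-nagano1}) goes the opposite way to what you need: it gives $\dim \Sigma_v T_pX \ge \limsup_i \dim \Sigma_{q_i}X$, while your contradiction requires $\dim T_vT_pX \le \liminf_i \dim T_{q_i}X$. Concretely, a dimension drop at $q_i$ could a priori occur at scales invisible in the blow-up at $p$ --- say, one-dimensional ``pinches'' of length $o\bigl(d(p,q_i)\bigr)$ separating two-dimensional pieces --- and then $T_pX$ could still be pure (a wedge of two planes glued at $v$ is purely $2$-dimensional), so pureness of $T_pX$ alone yields no contradiction by your route. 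Excluding this invisible collapse is the actual content of \cite[Proposition 8.1]{lytchak-nagano2}, and it requires an argument specific to locally compact, geodesically complete $\CAT(\kappa)$ spaces (for instance: a piece of diameter much smaller than $D_{\kappa}$ attached to the rest of the space at countably many points is a union of geodesics between those points, hence $1$-dimensional, and fixed-size pieces cannot accumulate without destroying local compactness); it does not follow formally from the lemmas quoted in this paper.
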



\section{Topological regularity on volume growths}

In this section,
we discuss direct consequences of the study in \cite{nagano1}
and the studies in \cite{lytchak-nagano1} and \cite{lytchak-nagano2}.

\subsection{Volume comparisons of CAT$\boldsymbol{(\kappa)}$ spaces}

We recall that for every proper, geodesically complete $\CAT(\kappa)$ space
$X$ of $\dim X = n$,
the support of $\Haus^n$ coincides with the set of all points $x \in X$
with $\dim \Sigma_xX = n - 1$
(\cite[Theorem 1.2]{lytchak-nagano1}).
We can reformulate the volume comparisons studied in \cite{nagano1}
in the following way.

Let $X$ be a proper, geodesically complete $\CAT(\kappa)$ space.
Let $p \in X$ be a point with $\dim \Sigma_pX = n-1$.
Then there exists $u \in \Sigma_pX$
such that
$\Sph^{n-2}$ is isometrically embedded into $\Sigma_u\Sigma_pX$
(\cite[Theorem B]{kleiner}, \cite[Theorem 1.3]{lytchak-nagano1}).
Since $\Sigma_pX$ is geodesically complete,
there exists a surjective $1$-Lipschitz map $\varphi_p$ from $\Sigma_pX$ onto 
the unit tangent sphere $\Sigma_oM_{\kappa}^n$ 
at a point $o \in M_{\kappa}^n$
with
$d(\varphi_p(u),\varphi_p(v)) = d(u,v)$ 
(\cite[Lemma 3.1]{nagano1}, \cite[Lemma 2.2]{lytchak}, 
\cite[Proposition 11.3]{lytchak-nagano1}).
For every $r \in (0,D_{\kappa})$,
there exists a surjective $1$-Lipschitz map 
$\Phi_p \colon B_r(p) \to B_r(o)$
defined by
$\Phi_p(x) := \exp_o d(p,x) \varphi_p(x_p')$,
where $\exp_o$ is the exponential map at $o$.
The map $\Phi_p$ gives us an absolute volume comparison
of Bishop-G\"{u}nther type.
If in addition $X$ is purely $n$-dimensional,
then we see a volume rigidity (\cite[Proposition 6.1]{nagano1}).
Namely, we have:

\begin{prop}\label{prop: abvolcomp}
\emph{(\cite{nagano1})}
Let $X$ be a proper, geodesically complete $\CAT(\kappa)$ space,
and let $p \in X$ be a point with $\dim \Sigma_pX = n-1$.
Then for every $r \in (0,D_{\kappa})$ we have
\[
\Haus^n \bigl( B_r(p) \bigr) \ge \omega_{\kappa}^n(r).
\]
Moreover,
if in addition $X$ is purely $n$-dimensional,
then the equality holds if and only if
the pair $(B_r(p),p)$ is isometric to $(B_r(o),o)$
for any point $o \in M_{\kappa}^n$.
\end{prop}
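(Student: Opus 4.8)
The plan is to read off both assertions from the two $1$-Lipschitz surjections constructed just above, namely the radial map $\Phi_p\colon B_r(p)\to B_r(o)$ and its angular companion $\varphi_p\colon\Sigma_pX\to\Sigma_oM_{\kappa}^n=\Sph^{n-1}$. The inequality is then immediate: since $\Phi_p$ is $1$-Lipschitz it does not increase $n$-dimensional Hausdorff measure, so $\Haus^n(\Phi_p(A))\le\Haus^n(A)$ for every $A\subseteq B_r(p)$; taking $A=B_r(p)$ and using surjectivity $\Phi_p(B_r(p))=B_r(o)$ yields
\[
\omega_{\kappa}^n(r)=\Haus^n\bigl(B_r(o)\bigr)=\Haus^n\bigl(\Phi_p(B_r(p))\bigr)\le\Haus^n\bigl(B_r(p)\bigr).
\]
No hypothesis on pureness enters here.

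For the rigidity, assume in addition that $X$ is purely $n$-dimensional; the ``if'' direction is trivial, so suppose $\Haus^n(B_r(p))=\omega_{\kappa}^n(r)$. Under pureness the support of $\Haus^n$ fills $B_r(p)$, so using the geodesic extension property and the unique starting direction of each point I would write $\Haus^n(B_r(p))=\int_{\Sigma_pX}\int_0^r J_p(t,\xi)\,dt\,d\Haus^{n-1}(\xi)$ in geodesic polar coordinates, where $J_p(t,\xi)$ is the radial Jacobian. The $\CAT(\kappa)$ condition is an upper curvature bound, so a G\"{u}nther-type comparison gives $J_p(t,\xi)\ge j_{\kappa}(t)^{n-1}$, with $j_{\kappa}(t)^{n-1}$ the polar volume density of $M_{\kappa}^n$. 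Moreover, since $\dim\Sigma_pX=n-1$ by hypothesis and $X$ is purely $n$-dimensional, Proposition \ref{prop: pure} makes $\Sigma_pX$ a purely $(n-1)$-dimensional, compact, geodesically complete $\CAT(1)$ space, whence $\Haus^{n-1}(\Sigma_pX)\ge\Haus^{n-1}(\Sph^{n-1})$ by the base volume bound of \cite{nagano1}. Recalling $\omega_{\kappa}^n(r)=\Haus^{n-1}(\Sph^{n-1})\int_0^r j_{\kappa}(t)^{n-1}\,dt$, these combine into
\[
\Haus^n\bigl(B_r(p)\bigr)\ge\Haus^{n-1}(\Sigma_pX)\int_0^r j_{\kappa}(t)^{n-1}\,dt\ge\Haus^{n-1}(\Sph^{n-1})\int_0^r j_{\kappa}(t)^{n-1}\,dt=\omega_{\kappa}^n(r).
\]
Since the extreme terms coincide, both inequalities are equalities: first $\Haus^{n-1}(\Sigma_pX)=\Haus^{n-1}(\Sph^{n-1})$, so by the rigidity in the base bound (\cite[Lemma 3.1 and Proposition 7.1]{nagano1}) $\Sigma_pX$ is isometric to $\Sph^{n-1}$ and $\varphi_p$ is an isometry; and second $J_p(t,\xi)=j_{\kappa}(t)^{n-1}$ for almost every $(\xi,t)$, i.e.\ the G\"{u}nther comparison is everywhere an equality.

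It remains to assemble these two equalities into the metric rigidity $(B_r(p),p)\cong(B_r(o),o)$, and this is where the main difficulty lies. The G\"{u}nther equality must be upgraded from an identity of volume densities to an identity of the full metric: one has to show that equality forces the radial Jacobi fields along every geodesic from $p$ to coincide with the model ones, so that the polar metric on $B_r(p)\setminus\{p\}$ is the warped product $dt^2+j_{\kappa}(t)^2\,d_{\Sigma_pX}^2$. Combined with the isometry $\Sigma_pX\cong\Sph^{n-1}$ just obtained, this warped product is exactly the model ball $B_r(o)\subseteq M_{\kappa}^n$, and the identification is realized by $\Phi_p$, which thereby becomes a bijective isometry. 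This last upgrade, from infinitesimal equality to metric rigidity in the singular, merely geodesically complete $\CAT(\kappa)$ setting, is the crux; it is precisely the content of the volume rigidity \cite[Proposition 6.1]{nagano1}, on which I would rely.
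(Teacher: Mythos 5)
Your proposal takes essentially the same route as the paper: the inequality is obtained exactly as there, from the surjective $1$-Lipschitz map $\Phi_p$ (a $1$-Lipschitz map cannot increase $\Haus^n$, and surjectivity converts this into the lower bound), and the paper gives no independent argument for the rigidity either, deferring it entirely to \cite[Proposition 6.1]{nagano1} --- precisely the citation on which your argument ultimately rests. Your intermediate polar-coordinate/Jacobian analysis is therefore redundant given that citation (and, taken on its own, would need substantial justification, since radial Jacobians, Jacobi fields, and warped-product rigidity are not off-the-shelf tools in singular geodesically complete $\CAT(\kappa)$ spaces), but since the crux is delegated to the same reference the paper itself invokes, the proof is correct as written.
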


Furthermore, we have the following relative volume comparison
of Bishop-Gromov type (\cite[Proposition 6.3]{nagano1}):

\begin{prop}\label{prop: relvolcomp}
\emph{(\cite{nagano1})}
Let $X$ be a proper, geodesically complete $\CAT(\kappa)$ space,
and let $p \in X$ be a point with $\dim \Sigma_pX = n-1$.
Then the function $f \colon (0,D_{\kappa}) \to [1,\infty]$ defined as
\[
f(t) := \frac{\Haus^n \bigl( B_t(p) \bigr)}{\omega_{\kappa}^n(t)}
\]
is monotone non-decreasing.
\end{prop}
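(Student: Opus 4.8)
The plan is to reduce the monotonicity of $f$ to a comparison between the $(n-1)$-dimensional Hausdorff measures of the metric spheres $\partial B_s(p)$, and then to invoke the standard integral ratio lemma. Write $\rho := d(p,\cdot)$ and $a(s) := \Haus^{n-1}\bigl(\partial B_s(p)\bigr)$, and let $\operatorname{sn}_\kappa$ denote the solution of $y'' + \kappa y = 0$ with $y(0) = 0$ and $y'(0) = 1$, so that $A(s) := \Haus^{n-1}\bigl(\partial B_s(o)\bigr) = \theta_{n-1}\,\operatorname{sn}_\kappa(s)^{n-1}$ in the model $M_\kappa^n$, where $\theta_{n-1} := \Haus^{n-1}(\Sph^{n-1})$ and $\omega_\kappa^n(t) = \int_0^t A(s)\,ds$. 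The goal then becomes to show that $s \mapsto a(s)/A(s)$ is non-decreasing on $(0,D_\kappa)$: granting this, the elementary lemma that $\int_0^t a \big/ \int_0^t A$ is non-decreasing whenever $a/A$ is non-decreasing and $A > 0$ yields that $f(t) = \Haus^n\bigl(B_t(p)\bigr)/\omega_\kappa^n(t)$ is non-decreasing, the lower bound $f \ge 1$ being Proposition \ref{prop: abvolcomp}.

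First I would establish the coarea decomposition $\Haus^n\bigl(B_t(p)\bigr) = \int_0^t a(s)\,ds$. Since $p$ lies in the support of $\Haus^n$, and $\Haus^n$ is carried by the set of points $x$ with $\dim \Sigma_x X = n-1$ (\cite[Theorem 1.2]{lytchak-nagano1}), the relevant part of $B_t(p)$ is $n$-rectifiable by the structure theory of \cite{lytchak-nagano1}; moreover $\rho$ is $1$-Lipschitz with $|\nabla \rho| = 1$ holding $\Haus^n$-almost everywhere on this part, because the radial geodesics emanating from $p$ realize the distance to $p$. The coarea formula for Lipschitz functions on rectifiable metric spaces then gives the displayed identity, with $a(s) < \infty$ for almost every $s$.

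The geometric heart is the monotonicity of $a(s)/A(s)$, which I would obtain from a radial expansion estimate. For $0 < s < t < D_\kappa$, geodesic completeness together with the uniqueness of geodesics of length $< D_\kappa$ makes the outward radial projection $P_{s,t} \colon \partial B_s(p) \to \partial B_t(p)$, sending $x$ to the point at distance $t$ on the extension of $px$, well defined $\Haus^{n-1}$-almost everywhere. For $x, y \in \partial B_s(p)$ the four points $x, y, P_{s,t}(x), P_{s,t}(y)$ share the starting directions $x_p', y_p'$ at $p$, so the monotonicity of comparison angles in a $\CAT(\kappa)$ space gives $\tilde\angle_p^\kappa\bigl(P_{s,t}(x), P_{s,t}(y)\bigr) \ge \tilde\angle_p^\kappa(x,y)$. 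Combined with the elementary model inequality $\bar d_\kappa(t,t,\beta)/\bar d_\kappa(s,s,\beta) \ge \operatorname{sn}_\kappa(t)/\operatorname{sn}_\kappa(s)$, where $\bar d_\kappa(r,r,\beta)$ is the side opposite the angle $\beta$ in the isosceles comparison triangle in $M_\kappa^2$ with both legs equal to $r$, this shows $d\bigl(P_{s,t}(x), P_{s,t}(y)\bigr) \ge \bigl(\operatorname{sn}_\kappa(t)/\operatorname{sn}_\kappa(s)\bigr)\,d(x,y)$. Thus $P_{s,t}$ expands distances by at least the factor $\operatorname{sn}_\kappa(t)/\operatorname{sn}_\kappa(s)$, its almost-everywhere-defined inverse is Lipschitz with the reciprocal constant, and hence $a(t) \ge \bigl(\operatorname{sn}_\kappa(t)/\operatorname{sn}_\kappa(s)\bigr)^{n-1} a(s)$, which is exactly $a(t)/A(t) \ge a(s)/A(s)$.

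I expect the main obstacle to be the rigorous execution of the two measure-theoretic steps in the singular setting: verifying the coarea identity, and the almost-everywhere definition, measurability, and Hausdorff-measure expansion property of $P_{s,t}$, since $X$ is not assumed purely $n$-dimensional and the spheres $\partial B_s(p)$ need not be manifolds. These points rest on the rectifiability and dimension theory of \cite{lytchak-nagano1}, whereas the model-space inequality and the integral ratio lemma are routine. This is in substance the argument of \cite[Proposition 6.3]{nagano1}, which may also be cited directly.
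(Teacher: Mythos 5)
The paper itself contains no proof of this proposition to compare against --- it is quoted verbatim from \cite{nagano1} (Proposition 6.3 there) --- so your proposal must be judged on its own terms, and it has a genuine gap at its geometric heart. The ``elementary model inequality'' $\bar d_\kappa(t,t,\beta)/\bar d_\kappa(s,s,\beta) \ge \operatorname{sn}_\kappa(t)/\operatorname{sn}_\kappa(s)$ is false, and consequently the claimed expansion property of $P_{s,t}$ fails even in the model spaces themselves. For $\kappa=1$ one has $\bar d_1(r,r,\beta) = 2\arcsin\bigl(\sin r\,\sin(\beta/2)\bigr)$, so $\bar d_1(r,r,\beta)/\sin r = h(\sin r)$ with $h(u) = 2\arcsin\bigl(u\sin(\beta/2)\bigr)/u$ non-decreasing (convexity of $\arcsin$); since $\sin r$ decreases on $(\pi/2,\pi)$, the ratio $\bar d_1(r,r,\beta)/\sin r$ decreases there. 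Concretely, take $X=\Sph^2$ (a legitimate instance of the proposition), $p$ the north pole, $x,y$ antipodal points of the equator, $s=\pi/2$, $t=3\pi/4$: then $d(x,y)=\pi$, while the radially extended points satisfy $d\bigl(P_{s,t}(x),P_{s,t}(y)\bigr)=\pi/2$, which is far smaller than $(\sin t/\sin s)\,d(x,y)=\pi/\sqrt{2}$. For $\kappa<0$ the failure is worse: spheres in $M_\kappa^2$ have chordal diameter $2r$ but circumference proportional to $\operatorname{sn}_\kappa(r)$, which grows exponentially, so no map $\partial B_s(p)\to\partial B_t(p)$ can expand all distances by the factor $\operatorname{sn}_\kappa(t)/\operatorname{sn}_\kappa(s)$ once $t$ is large. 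Your estimate is correct only for $\kappa=0$ (where it is an identity) and, for $\kappa>0$, in the restricted range $t\le D_\kappa/2$; since the proposition asserts monotonicity on all of $(0,D_\kappa)$ for every $\kappa$ --- and the paper applies it with $\kappa=1$ and $r$ ranging over all of $(0,\pi)$ in the proof of Theorem \ref{thm: volsphhm} --- this is a fatal gap, not a boundary technicality. (A secondary issue: for $\kappa>0$ and $t>D_\kappa/2$ the triangles involved may have perimeter $\ge 2D_\kappa$, so even the comparison-angle monotonicity step needs extra care there.)

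The inequality you want is true only infinitesimally, and that is how the argument can be repaired: $\bar d_\kappa(r,r,\beta) = \beta\,\operatorname{sn}_\kappa(r) + O(\beta^3)$ as $\beta\to 0$, so the $\CAT(\kappa)$ comparison (now applied only to degenerating triangles, where it is sharp and the perimeter restriction is harmless) shows that the \emph{inward} radial map $c_{t,s}\colon \partial B_t(p)\to\partial B_s(p)$, $z\mapsto\gamma_{pz}(s)$, has pointwise Lipschitz constant at most $\operatorname{sn}_\kappa(s)/\operatorname{sn}_\kappa(t)$ at every point. This map is canonically defined (no choice of extensions) and surjective, by geodesic completeness together with the fact that local geodesics of length $<D_\kappa$ in a $\CAT(\kappa)$ space are geodesics; the standard countable-decomposition argument for maps with pointwise Lipschitz bounds then gives $\Haus^{n-1}\bigl(\partial B_s(p)\bigr) \le \bigl(\operatorname{sn}_\kappa(s)/\operatorname{sn}_\kappa(t)\bigr)^{n-1}\Haus^{n-1}\bigl(\partial B_t(p)\bigr)$, which is exactly the sphere-ratio monotonicity $a(s)/A(s)\le a(t)/A(t)$ you were after. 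With this replacement, your outer reduction --- the coarea identity (which does genuinely require the rectifiability theory of \cite{lytchak-nagano1}, plus the trivial remark that $f$ is monotone past the first $t$ with $\Haus^n\bigl(B_t(p)\bigr)=\infty$), the integral ratio lemma, and Proposition \ref{prop: abvolcomp} for the bound $f\ge 1$ --- goes through; this infinitesimal, Jacobian-type mechanism, rather than a macroscopic expansion estimate, is also what underlies the proof in \cite{nagano1}.
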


\subsection{Volume convergence of CAT$\boldsymbol{(\kappa)}$ spaces}

Let $X_i$, $i = 1, 2, \dots$,
be a sequence of compact geodesically complete 
$\CAT(\kappa)$ spaces of $\dim X_i = n$.
Assume that
$X_i$ converges to some compact metric space $X$
in the Gromov-Hausdorff topology.
By Lemmas \ref{lem: stab} and \ref{lem: npure}, 
the compact, geodesically complete $\CAT(\kappa)$ space 
$X$ satisfies $\dim X = n$;
if in addition each $X_i$ is purely $n$-dimensional,
then so is $X$.

We can quote the volume convergence theorem for $\CAT(\kappa)$ spaces
in \cite[Theorem 1.1]{nagano1} in the following form:

\begin{thm}\label{thm: volconv}
\emph{(\cite{nagano1})}
Let $X_i$, $i = 1, 2, \dots$, 
be a sequence of compact, geodesically complete 
$\CAT(\kappa)$ spaces of $\dim X_i = n$.
If $X_i$ converges to some compact metric space $X$
in the Gromov-Hausdorff topology,
then 
\[
\Haus^n(X) =
\lim_{i \to \infty} \Haus^n(X_i).
\]
\end{thm}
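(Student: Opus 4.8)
The plan is to reduce the statement to the weak convergence of the measures $\Haus^n$ along the sequence, and then to a local volume convergence on metric balls, where the Bishop--G\"unther and Bishop--Gromov comparisons of Propositions \ref{prop: abvolcomp} and \ref{prop: relvolcomp} do the essential work. By Lemma \ref{lem: stab} the limit $X$ is a compact, geodesically complete $\CAT(\kappa)$ space with $\dim X = n$, so $\Haus^n$ is the relevant top-dimensional measure on every space in the sequence and on $X$; moreover the support of $\Haus^n$ is exactly the set of points $x$ with $\dim \Sigma_x X = n-1$. Since each $X_i$ converges to $X$, I fix $\varepsilon_i$-approximations with $\varepsilon_i \to 0$, along which points of $X$ are lifted to points of $X_i$ and conversely. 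Because the total masses will be seen to be uniformly bounded, it suffices to prove that $\Haus^n$ on $X_i$ converges weakly to $\Haus^n$ on $X$: testing against the constant function $1$ then yields $\Haus^n(X_i) \to \Haus^n(X)$. Weak convergence in turn follows once I establish convergence of the volumes of metric balls, $\Haus^n(B_r(p_i)) \to \Haus^n(B_r(p))$, for $p_i \to p$ with $p$ a top-dimensional point of $X$ and all but countably many radii $r$.

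I would obtain the two inequalities separately. For the lower bound, fix $\varepsilon > 0$ and take a maximal $\varepsilon$-separated subset $\{p_1,\dots,p_N\}$ of the support of $\Haus^n$ in $X$; the balls $B_{\varepsilon/2}(p_j)$ are pairwise disjoint, and lifting the $p_j$ to top-dimensional points $p_{j,i} \in X_i$ keeps the balls $B_{(\varepsilon/2)-\varepsilon_i}(p_{j,i})$ disjoint for large $i$. Summing the local ball convergence over $j$ and letting $\varepsilon \to 0$ gives $\liminf_i \Haus^n(X_i) \ge \Haus^n(X)$. For the upper bound, the relative comparison of Proposition \ref{prop: relvolcomp} provides, at every top-dimensional point $p$ and all $r \le \rho < D_{\kappa}$, the doubling-type estimate $\Haus^n(B_r(p)) \le (\omega_{\kappa}^n(r)/\omega_{\kappa}^n(\rho)) \, \Haus^n(B_{\rho}(p))$; combined with a finite covering of each $X_i$ by balls centered at top-dimensional points, this bounds $\Haus^n(X_i)$ uniformly and, more to the point, prevents concentration of volume, so that no mass can be created in the limit. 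Covering $X$ by balls centered in the support and summing the local convergence then yields $\limsup_i \Haus^n(X_i) \le \Haus^n(X)$.

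The crux, and the step I expect to be the main obstacle, is the local volume convergence on balls $\Haus^n(B_r(p_i)) \to \Haus^n(B_r(p))$. The absolute comparison of Proposition \ref{prop: abvolcomp} already pins the density from below by $1$ and, through its rigidity clause, identifies the equality locus with the round model ball $(B_r(o),o)$ in $M_{\kappa}^n$; the relative comparison makes the ratio $f_p(r) = \Haus^n(B_r(p))/\omega_{\kappa}^n(r)$ monotone, hence continuous in $r$ off a countable set and stable under passing to limits of the pointed spaces $(X_i,p_i)$. To convert this stability into genuine convergence I would use the surjective $1$-Lipschitz comparison maps $\Phi_{p_i} \colon B_r(p_i) \to B_r(o)$ and their limit $\Phi_p$, together with the geodesic completeness of the spaces of directions, to show that the volume ratios pass to the limit without loss; the delicate point is ruling out a drop of $\Haus^n$-density across the approximation, which is precisely where the rigidity in Proposition \ref{prop: abvolcomp} and the monotonicity in Proposition \ref{prop: relvolcomp} must be combined. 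Once the local ball convergence is in hand, the two semicontinuities above close the argument and give $\Haus^n(X) = \lim_i \Haus^n(X_i)$.
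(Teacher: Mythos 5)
You should first note that the paper itself contains no proof of this statement: it is quoted directly from \cite[Theorem 1.1]{nagano1}, whose proof occupies most of that paper and proceeds by covering the spaces $X_i$, up to sets of small measure, by strainer-type almost-isometric (bi-Lipschitz with controlled constants) charts, through which volumes can be compared across the Gromov--Hausdorff approximation. Judged as a self-contained argument, your proposal has a genuine gap, and it sits exactly where you place the ``crux''. The convergence $\Haus^n(B_r(p_i)) \to \Haus^n(B_r(p))$ is the theorem itself in localized form, so both of your semicontinuity arguments, which ``sum the local ball convergence'', are circular until that step is proved; and the tools you propose cannot prove it. The rigidity clause of Proposition \ref{prop: abvolcomp} applies only when the equality $\Haus^n(B_r(p)) = \omega_{\kappa}^n(r)$ holds exactly, hence says nothing about spaces whose ball volumes strictly exceed the model value; the monotonicity of Proposition \ref{prop: relvolcomp} concerns the radius variable at a fixed point of a fixed space and is completely insensitive to perturbing the space in the Gromov--Hausdorff sense; and the surjective $1$-Lipschitz maps $\Phi_{p_i}$ only reproduce the lower bound $\Haus^n(B_r(p_i)) \ge \omega_{\kappa}^n(r)$, since Lipschitz maps do not increase Hausdorff measure.

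The second, related, gap is that your upper-bound direction misreads the comparison. Under an \emph{upper} curvature bound the ratio $f(t) = \Haus^n\bigl(B_t(p)\bigr)/\omega_{\kappa}^n(t)$ is monotone \emph{non-decreasing}, so Proposition \ref{prop: relvolcomp} bounds the volume of a small ball by that of a large one --- a reverse-doubling inequality. This cannot ``prevent concentration of volume'', and it yields no uniform bound on $\Haus^n(X_i)$: no such bound is assumed in the statement, and for a fixed radius a ball in a $\CAT(\kappa)$ space can have arbitrarily large $\Haus^n$-measure (many sheets glued along a lower-dimensional set), in contrast with the lower-curvature-bound setting where Bishop's inequality makes the $\limsup$ direction easy. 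Ruling out excess volume in the limit is the hard half of the theorem, and it is precisely what the chart-by-chart analysis of \cite{nagano1} (or, later, the capacity-versus-volume estimates of \cite{lytchak-nagano1}) supplies. For what it is worth, the $\liminf$ half of your sketch can be salvaged with additional input --- almost-everywhere Euclidean tangent cones to get density one, a Vitali covering of $X$, and the observation that lifted centers, which need not be top-dimensional in $X_i$, have top-dimensional points nearby --- but the $\limsup$ half cannot be extracted from Propositions \ref{prop: abvolcomp} and \ref{prop: relvolcomp} alone, so the proposal does not prove the theorem.
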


From Proposition \ref{prop: abvolcomp}
we deduce the following
(\cite[Proposition 6.5]{nagano2}):

\begin{prop}\label{prop: precpt}
\emph{(\cite{nagano2})}
Let $c \in (0,\infty)$.
Then every isometry class of purely $n$-dimensional compact 
geodesically complete $\CAT(\kappa)$ spaces 
whose $n$-dimensional Hausdorff measures are bounded above by $c$
are precompact
in the Gromov-Hausdorff topology.
\end{prop}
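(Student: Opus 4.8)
The plan is to verify the hypotheses of Gromov's precompactness theorem: it suffices to show that the family is \emph{uniformly totally bounded}, that is, that there is a uniform upper bound on diameters and, for every $\epsilon > 0$, a uniform bound $N(\epsilon)$ on the number of $\epsilon$-balls needed to cover each space. The entire argument consists of converting the volume bound $\Haus^n(X) \le c$ into metric packing estimates by means of the absolute volume comparison of Proposition \ref{prop: abvolcomp}.

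First I would record that the comparison applies at \emph{every} point. Since $X$ is purely $n$-dimensional, we have $\dim X = n$ and $\dim \Sigma_pX = n-1$ for every $p \in X$ (as noted in the proof of Lemma \ref{lem: npure}). Hence Proposition \ref{prop: abvolcomp} applies at each point and yields $\Haus^n \bigl( B_r(p) \bigr) \ge \omega_{\kappa}^n(r)$ for all $p \in X$ and all $r \in (0,D_{\kappa})$. This is the one structural fact that makes the volume bound usable as a packing bound.

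Next comes the covering estimate. Fix $\epsilon \in (0,2D_{\kappa})$ and let $\{x_1,\dots,x_N\}$ be a maximal $\epsilon$-separated subset of $X$. The open balls $U_{\epsilon/2}(x_i)$ are pairwise disjoint, and each satisfies $\Haus^n \bigl( U_{\epsilon/2}(x_i) \bigr) \ge \omega_{\kappa}^n(\epsilon/2)$ by the previous step (approximating $U_{\epsilon/2}(x_i)$ from inside by closed balls). Summing over $i$ and using $\Haus^n(X) \le c$ gives $N \le c/\omega_{\kappa}^n(\epsilon/2) =: N(\epsilon)$, a bound depending only on $c$, $\kappa$, $n$, and $\epsilon$, but not on $X$. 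By maximality the balls $U_{\epsilon}(x_i)$ cover $X$, so $N(\epsilon)$ is exactly the desired uniform covering number at scale $\epsilon$.

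Finally the diameter, which is where I expect the main obstacle. For $\kappa \le 0$ it is immediate: $\omega_{\kappa}^n(r) \to \infty$ as $r \to \infty$, while $\Haus^n \bigl( B_r(p) \bigr) \le c$, forcing $\operatorname{diam}(X) \le (\omega_{\kappa}^n)^{-1}(c)$. For $\kappa > 0$ the comparison saturates at $r = D_{\kappa}$, where the model sphere is exhausted and $\omega_{\kappa}^n$ no longer grows, so no direct bound on large balls is available; this is the delicate case. Here I would instead use connectivity: a compact geodesically complete $\CAT(\kappa)$ space is connected, since local geodesics extend indefinitely and any two points are joined by a path. A connected space covered by $N$ balls of radius $\epsilon$ has diameter at most $2\epsilon N$, because the overlap graph of the cover is connected and one chains through at most $N$ mutually overlapping balls. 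Taking $\epsilon = D_{\kappa}/2$ and inserting the bound $N(\epsilon)$ from the previous paragraph produces a uniform diameter bound. With both the diameters and all covering numbers uniformly controlled, Gromov's precompactness theorem gives the assertion.
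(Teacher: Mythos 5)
Your route is the one the paper intends: the paper does not reprove this proposition (it quotes it from \cite[Proposition 6.5]{nagano2}) but explicitly presents it as a deduction from Proposition \ref{prop: abvolcomp}, which is precisely your conversion of the volume bound into packing data. Your first two steps are correct: pure $n$-dimensionality gives $\dim \Sigma_pX = n-1$ at every point, so the comparison $\Haus^n\bigl(B_r(p)\bigr) \ge \omega_{\kappa}^n(r)$ applies everywhere, and the disjoint-ball argument yields the uniform covering bound $N(\epsilon) \le c/\omega_{\kappa}^n(\epsilon/2)$. (For $\kappa \le 0$ the case is actually vacuous when $n \ge 1$: in a $\CAT(\kappa)$ space with $\kappa\le 0$ local geodesics are geodesics, so geodesic completeness forces the space to contain isometric copies of $\R$ and hence to be non-compact; your volume-growth bound is harmless but bounds an empty family.)

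The genuine gap is the sentence ``a compact geodesically complete $\CAT(\kappa)$ space is connected, since local geodesics extend indefinitely and any two points are joined by a path.'' Geodesic completeness only extends geodesics that already exist, and for $\kappa > 0$ a $\CAT(\kappa)$ space is merely required to be $D_{\kappa}$-geodesic, so points at distance $\ge D_{\kappa}$ need not be joined by any path. The claim is false: the paper's own space $T$ is a disconnected compact $\CAT(1)$ space, and for $n \ge 1$ the disjoint union of two unit $n$-spheres, with all distances between the two copies set equal to a constant $R \ge \pi$, is a purely $n$-dimensional, compact, geodesically complete $\CAT(1)$ space (every geodesic, hence every geodesic triangle, lies in a single copy). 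This is not a patchable technicality in your argument but a constraint on the statement itself: letting $R \to \infty$ produces a family with $\Haus^n \equiv 2\Haus^n(\Sph^n)$ and unbounded diameter, hence with no Gromov--Hausdorff convergent subsequence; so for $\kappa>0$ and $c \ge 2\Haus^n(\Sph^n)$ no proof of the literal statement for possibly disconnected spaces can exist. The proposition must therefore be read for connected spaces, or one must note that connectedness is forced whenever $c < 2\Haus^n(\Sph^n)$: distinct components are at mutual distance $\ge D_{\kappa}$, each component is itself a purely $n$-dimensional, compact, geodesically complete $\CAT(1)$ space, and so each has $\Haus^n \ge \Haus^n(\Sph^n)$ by Proposition \ref{prop: abvolcomp}; this is exactly the situation in the paper's applications (Propositions \ref{prop: volsphmfd} and \ref{prop: volhomsphhm}). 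Once connectedness is secured, your chaining bound, diameter at most $2\epsilon N(\epsilon)$ with $\epsilon = D_{\kappa}/2$, is correct and completes the proof.
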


We have the following infinitesimal regularity of Hausdorff measures
on $\CAT(\kappa)$ spaces 
(\cite[Theorem 1.4]{nagano1}):

\begin{thm}\label{thm: infvolconv}
\emph{(\cite{nagano1})}
Let $X$ be a proper, geodesically complete $\CAT(\kappa)$ space,
and let $p \in X$ be a point with $\dim \Sigma_pX = n-1$.
Then
\[
\lim_{t \to 0} \frac{\Haus^n \bigl( B_t(p) \bigr)}{t^n}
= \Haus^n \bigl( B_1(o_p) \bigr),
\]
where $B_1(o_p)$ is the unit ball centered at the vertex $o_p$ in $T_pX$.
\end{thm}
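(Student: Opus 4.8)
The plan is to compute the limit by a blow-up argument, realizing the quantity $\Haus^n\bigl(B_t(p)\bigr)/t^n$ as the $n$-dimensional volume of the unit ball in a rescaled space that converges to the tangent space $T_pX$. First I would rescale the metric: for $t \in (0,D_{\kappa})$ set $X_t := (X, t^{-1}d)$. Each $X_t$ is again a proper, geodesically complete $\CAT(\kappa t^2)$ space, and by the very construction of the tangent space the pointed spaces $(X_t, p)$ converge to $(T_pX, o_p)$ in the pointed Gromov-Hausdorff topology as $t \to 0$; note that $\dim \Sigma_{o_p}T_pX = \dim \Sigma_pX = n-1$, so $o_p$ lies in the support of $\Haus^n$ on $T_pX$. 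Since the $n$-dimensional Hausdorff measure scales by $t^{-n}$ under $d \mapsto t^{-1}d$, the closed unit ball $B_1^{X_t}(p)$ of $X_t$ coincides with $B_t(p)$ as a set and satisfies
\[
\frac{\Haus^n\bigl(B_t(p)\bigr)}{t^n} = \Haus^n_{X_t}\bigl(B_1^{X_t}(p)\bigr).
\]

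Before identifying the limit I would first secure its existence, which lets me argue along any convenient null sequence and pass freely to subsequences. By the relative volume comparison (Proposition \ref{prop: relvolcomp}) the function $f(t) = \Haus^n\bigl(B_t(p)\bigr)/\omega_{\kappa}^n(t)$ is monotone non-decreasing, so $\lim_{t \to 0^+} f(t) = \inf_t f(t) \in [1,\infty)$ exists. Because small balls of $M_{\kappa}^n$ are nearly Euclidean, $\omega_{\kappa}^n(t)/t^n$ tends to the volume of the Euclidean unit $n$-ball as $t \to 0$; hence $L := \lim_{t \to 0} \Haus^n\bigl(B_t(p)\bigr)/t^n$ exists.

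It then remains to identify $L$ with $\Haus^n\bigl(B_1(o_p)\bigr)$ through volume convergence. Since $T_pX = C(\Sigma_pX)$ is a Euclidean cone, it is self-similar under scaling about $o_p$, so $\Haus^n\bigl(B_r(o_p)\bigr) = r^n \Haus^n\bigl(B_1(o_p)\bigr)$; in particular $r \mapsto \Haus^n\bigl(B_r(o_p)\bigr)$ is continuous and the boundary sphere carries no measure, $\Haus^n\bigl(\partial B_1(o_p)\bigr) = 0$. Now choose any null sequence $t_i \downarrow 0$; then $X_{t_i} \to T_pX$ and the closed unit balls $B_1^{X_{t_i}}(p)$ converge in the Gromov-Hausdorff topology to $B_1(o_p)$. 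I would then invoke the volume convergence theorem (Theorem \ref{thm: volconv}) to conclude $\Haus^n_{X_{t_i}}\bigl(B_1^{X_{t_i}}(p)\bigr) \to \Haus^n\bigl(B_1(o_p)\bigr)$; together with the displayed identity and the existence of $L$ this would yield $L = \Haus^n\bigl(B_1(o_p)\bigr)$, as desired.

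I expect the main obstacle to lie precisely in this last application of volume convergence at the level of balls. Theorem \ref{thm: volconv} is stated for compact geodesically complete $\CAT(\kappa)$ spaces, whereas the closed balls $B_1^{X_{t_i}}(p)$ fail to be geodesically complete (a geodesic reaching $\partial B_1$ cannot be extended inside the ball), so the theorem does not apply verbatim. I would resolve this by working instead with the full proper spaces $X_{t_i} \to T_pX$ and a localized form of volume convergence, using the monotone comparison of Proposition \ref{prop: relvolcomp} together with the absolute comparison of Proposition \ref{prop: abvolcomp} to squeeze the measures of the open and closed unit balls between those of slightly smaller and larger concentric balls; the vanishing $\Haus^n\bigl(\partial B_1(o_p)\bigr) = 0$ then guarantees that no volume concentrates on or escapes across the limiting boundary sphere. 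This no-concentration estimate, which upgrades the Gromov-Hausdorff convergence of the balls to convergence of their $n$-volumes, is the technical crux of the argument.
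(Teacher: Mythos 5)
First, a point of context: the paper does not prove this statement at all --- Theorem \ref{thm: infvolconv} is imported verbatim from \cite{nagano1} (Theorem 1.4 there), so there is no internal proof to match your argument against. Your blow-up strategy is indeed the natural one, and it is essentially how the result is recast in the later generalization mentioned in Remark \ref{rem: ainfvolconv} (volume convergence for pointed Gromov--Hausdorff limits of $\GCBA$ spaces in \cite{lytchak-nagano1}). Your preliminary steps are sound: the rescaled spaces $(X,t^{-1}d)$ are proper, geodesically complete and $\CAT(\kappa t^2)$; the blow-ups converge to $(T_pX,o_p)$ (though this is a theorem about proper geodesically complete spaces, not something true ``by the very construction'' of $T_pX$); the existence of the limit $L$ via Proposition \ref{prop: relvolcomp} is correct; and the cone identities $\Haus^n\bigl(B_r(o_p)\bigr)=r^n\Haus^n\bigl(B_1(o_p)\bigr)$ and $\Haus^n\bigl(\partial B_1(o_p)\bigr)=0$ are fine.

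The genuine gap is the step you yourself flag as the ``technical crux'': upgrading pointed Gromov--Hausdorff convergence of the blow-ups to convergence of the $n$-volumes of unit balls. This is not a technicality that Propositions \ref{prop: abvolcomp} and \ref{prop: relvolcomp} plus the null boundary can deliver. Both propositions are \emph{intra-space} comparisons of a single space against the constant-curvature model $M_{\kappa}^n$: Proposition \ref{prop: abvolcomp} bounds $\Haus^n\bigl(B_r(p)\bigr)$ below by $\omega_{\kappa}^n(r)$, not by $\Haus^n\bigl(B_r(o_p)\bigr)$ (which may be strictly larger, e.g.\ by a factor $3/2$ for $\Sph^{n-1}\ast T$), and neither proposition transfers any measure information across a Gromov--Hausdorff approximation. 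In general, $n$-dimensional Hausdorff measure is neither upper nor lower semicontinuous under Gromov--Hausdorff convergence; making it converge for geodesically complete spaces with upper curvature bounds is precisely the content of the volume convergence theorem, and the pointed/local version you need is essentially equivalent in depth to the statement being proved (it is the main result of \cite{nagano1} in its local form, reproved in \cite[Section 12]{lytchak-nagano1}). As you correctly note, Theorem \ref{thm: volconv} as stated in this paper cannot be invoked: the blow-ups do not converge as compact spaces, and closed balls are not geodesically complete. So your argument reduces the theorem to an unproved statement of the same order of difficulty, rather than to anything available in the paper. A self-contained proof would instead have to work directly, e.g.\ using the logarithm map $x\mapsto\bigl(d(p,x),x_p'\bigr)\in C(\Sigma_pX)$, which by angle comparison and geodesic completeness is surjective onto $B_t(o_p)$ and almost $1$-Lipschitz on small balls (giving $L\ge\Haus^n\bigl(B_1(o_p)\bigr)$), together with a genuinely nontrivial covering argument for the reverse inequality --- this is the actual work carried out in \cite{nagano1}.
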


\begin{rem}\label{rem: ainfvolconv}
Lytchak and the author have generalized
Theorems \ref{thm: volconv}, \ref{thm: infvolconv},
and Proposition \ref{prop: precpt} 
for a canonical geometric volume measure
in \cite[Theorems 1.4, 1.5, and Subsection 12.5]{lytchak-nagano1}.
\end{rem}

\subsection{Topological regularity}

In what follows, we will use:

\begin{lem}\label{lem: volgrowth}
Let $X$ be a proper, geodesically complete $\CAT(\kappa)$ space,
and let $p \in X$ be a point with $\dim \Sigma_pX = n-1$.
Then for every $r \in (0,D_{\kappa})$ we have
\[
\frac{\Haus^{n-1} (\Sigma_pX)}{\Haus^{n-1} (\Sph^{n-1})} \le
\frac{\Haus^n \bigl( B_r(p) \bigr)}{\omega_{\kappa}^n(r)}.
\]
\end{lem}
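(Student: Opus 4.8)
The plan is to identify the limit of the relative volume ratio $f(t) := \Haus^n\bigl(B_t(p)\bigr)/\omega_{\kappa}^n(t)$ as $t \to 0$ with the normalized Hausdorff measure of the link $\Sigma_pX$, and then to invoke the monotonicity of $f$ to compare that limiting value with the value at the prescribed radius $r$. Thus the inequality will come out as ``infimum $\le$ any value''.

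First I would compute the infinitesimal behaviour of both the numerator and the denominator of $f(t)$ as $t \to 0$. For the numerator, Theorem \ref{thm: infvolconv} gives
\[
\lim_{t \to 0} \frac{\Haus^n \bigl( B_t(p) \bigr)}{t^n} = \Haus^n \bigl( B_1(o_p) \bigr),
\]
where $B_1(o_p)$ is the unit ball about the vertex in the tangent cone $T_pX = C(\Sigma_pX)$. Since $\dim \Sigma_pX = n-1$, a cone volume formula yields $\Haus^n \bigl( B_1(o_p) \bigr) = \tfrac{1}{n} \Haus^{n-1}(\Sigma_pX)$: writing points of the Euclidean cone as $(s,\xi)$ with $s \in [0,1]$ and $\xi \in \Sigma_pX$, the $n$-dimensional Hausdorff measure disintegrates with weight $s^{n-1}\,ds\,d\Haus^{n-1}$, so integrating $s^{n-1}$ over $[0,1]$ produces the factor $1/n$. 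For the denominator, small balls in the model space $M_{\kappa}^n$ are asymptotically Euclidean, so $\lim_{t \to 0} \omega_{\kappa}^n(t)/t^n$ equals the volume of the unit Euclidean $n$-ball, namely $\tfrac{1}{n} \Haus^{n-1}(\Sph^{n-1})$.

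Dividing these two limits, the common factor $1/n$ and the powers $t^n$ cancel, giving
\[
\lim_{t \to 0} f(t) = \frac{\Haus^{n-1}(\Sigma_pX)}{\Haus^{n-1}(\Sph^{n-1})}.
\]
Finally I would apply the Bishop--Gromov type monotonicity of Proposition \ref{prop: relvolcomp}: since $f$ is monotone non-decreasing on $(0,D_{\kappa})$, its limit as $t \to 0$ is its infimum, whence $\lim_{t \to 0} f(t) \le f(r)$ for every $r \in (0,D_{\kappa})$. Combining this with the displayed identity yields exactly the claimed inequality.

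I expect the only delicate point to be the justification of the cone volume formula $\Haus^n \bigl( B_1(o_p) \bigr) = \tfrac{1}{n} \Haus^{n-1}(\Sigma_pX)$ in the metric (rather than smooth) setting; this rests on the product/coarea structure of the Euclidean cone $C(\Sigma_pX)$, together with the fact that $\Sigma_pX$ is a compact, geodesically complete $\CAT(1)$ space on which $\Haus^{n-1}$ is well behaved. Everything else is a direct assembly of Theorem \ref{thm: infvolconv} and Proposition \ref{prop: relvolcomp}.
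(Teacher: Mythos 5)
Your proposal is correct and follows essentially the same route as the paper: it combines Theorem \ref{thm: infvolconv}, the cone volume identity $\Haus^n\bigl(B_1(o_p)\bigr)/\omega_0^n(1) = \Haus^{n-1}(\Sigma_pX)/\Haus^{n-1}(\Sph^{n-1})$, and the Bishop--Gromov monotonicity of Proposition \ref{prop: relvolcomp}, exactly as in the paper's one-line chain of (in)equalities. The only difference is presentational: you spell out the factor $1/n$ and the cone disintegration explicitly, which the paper leaves implicit.
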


\begin{proof}
From Theorem \ref{thm: infvolconv} and Proposition \ref{prop: relvolcomp},
we derive
\[
\frac{\Haus^{n-1} (\Sigma_pX)}{\Haus^{n-1} (\Sph^{n-1})}
= \frac{\Haus^n \bigl( B_1(o_p) \bigr)}{\omega_0^n(1)}
= \lim_{t \to 0} \frac{\Haus^n \bigl( B_t(p) \bigr)}{\omega_{\kappa}^n(t)}
\le \frac{\Haus^n \bigl( B_r(p) \bigr)}{\omega_{\kappa}^n(r)}
\]
(cf.~\cite[Remark 2.10]{nagano2}),
as required.
\end{proof}

Now we prove the following regularity:

\begin{thm}\label{thm: vgtopreg}
Let $X$ be a proper, geodesically complete $\CAT(\kappa)$ space,
and let $W$ be a purely $n$-dimensional open subset of $X$.
If for every $x \in W$ there exists $r \in (0,D_{\kappa})$ satisfying
\begin{equation}
\frac{\Haus^n \bigl( B_r(x) \bigr)}{\omega_{\kappa}^n(r)} < \frac{3}{2},
\label{eqn: vgtoprega}
\end{equation}
then $W$ is a topological $n$-manifold.
\end{thm}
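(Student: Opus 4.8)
The plan is to reduce the topological manifold conclusion to the local topological regularity theorem of Lytchak and the author (\cite[Theorem 1.1]{lytchak-nagano2}), which states that a locally compact space with an upper curvature bound is a topological $n$-manifold exactly when all its spaces of directions are homotopy equivalent to $\Sph^{n-1}$. Thus it suffices to prove that for every $x \in W$ the space of directions $\Sigma_xX$ is homeomorphic (in fact it will turn out to be homeomorphic to, not merely homotopy equivalent to) an $(n-1)$-sphere. Since $W$ is purely $n$-dimensional and open, Proposition \ref{prop: pure} guarantees that each $\Sigma_xX$ is itself pure-dimensional; combined with $\dim \Sigma_xX = n-1$ (a point in the support of $\Haus^n$), every $\Sigma_xX$ is a purely $(n-1)$-dimensional, compact, geodesically complete $\CAT(1)$ space.

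The crucial step is to transfer the volume-growth hypothesis \eqref{eqn: vgtoprega} on balls in $X$ into a bound on the total Hausdorff measure of the link $\Sigma_xX$. This is exactly what Lemma \ref{lem: volgrowth} provides: for the point $x$ with $\dim\Sigma_xX = n-1$ and the radius $r$ furnished by the hypothesis, we have
\[
\frac{\Haus^{n-1}(\Sigma_xX)}{\Haus^{n-1}(\Sph^{n-1})}
\le \frac{\Haus^n\bigl(B_r(x)\bigr)}{\omega_{\kappa}^n(r)}
< \frac{3}{2}.
\]
Therefore $\Haus^{n-1}(\Sigma_xX) < \tfrac{3}{2}\,\Haus^{n-1}(\Sph^{n-1})$, that is, $\Sigma_xX$ satisfies the optimal volume bound \eqref{eqn: volsphthma} of the volume sphere theorem \cite[Theorem 8.3]{lytchak-nagano2} in dimension $n-1$.

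Applying that volume sphere theorem to $\Sigma_xX$ yields that $\Sigma_xX$ is homeomorphic to $\Sph^{n-1}$, and in particular homotopy equivalent to $\Sph^{n-1}$. Since this holds for every $x \in W$, the hypotheses of the local topological regularity theorem \cite[Theorem 1.1]{lytchak-nagano2} are met on $W$, and we conclude that $W$ is a topological $n$-manifold. I expect the only delicate point to be the bookkeeping that $\Sigma_xX$ genuinely inherits pure $(n-1)$-dimensionality and geodesic completeness so that the volume sphere theorem applies verbatim; this is handled by Proposition \ref{prop: pure} together with the standard facts recalled in Subsection 2.3 that $\Sigma_xX$ is compact and geodesically complete whenever $X$ is proper and geodesically complete. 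One small case to keep in mind is $n=1$, where the link is a two-point set and the statement is immediate, so the substance of the argument lives in $n \ge 2$.
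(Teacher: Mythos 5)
Your proposal is correct and follows essentially the same route as the paper's own proof: Lemma \ref{lem: volgrowth} converts the volume-growth bound \eqref{eqn: vgtoprega} into $\Haus^{n-1}(\Sigma_xX) < \tfrac{3}{2}\Haus^{n-1}(\Sph^{n-1})$, Proposition \ref{prop: pure} ensures $\Sigma_xX$ is a purely $(n-1)$-dimensional, compact, geodesically complete $\CAT(1)$ space, the volume sphere theorem of \cite{lytchak-nagano2} gives $\Sigma_xX \cong \Sph^{n-1}$, and the local topological regularity theorem of \cite{lytchak-nagano2} then yields the conclusion. Your extra remarks on the $n=1$ case and on verifying the hypotheses for $\Sigma_xX$ are harmless elaborations of the same argument.
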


\begin{proof}
By Lemma \ref{lem: volgrowth},
for every $x \in W$ the condition \eqref{eqn: vgtoprega} leads to
$\Haus^{n-1} (\Sigma_xX) < (3/2) \Haus^{n-1} (\Sph^{n-1})$.
From Proposition \ref{prop: pure} it follows that
$\Sigma_xX$ is a purely $(n-1)$-dimensional, compact, 
geodesically complete $\CAT(1)$ space.
Due to the volume sphere theorem \cite[Theorem 8.3]{lytchak-nagano2},
the space $\Sigma_xX$ is homeomorphic to $\Sph^{n-1}$.
Applying the local topological regularity theorem 
\cite[Theorem 1.1]{lytchak-nagano2} to $W$,
we conclude that $W$ is a topological $n$-manifold.
\end{proof}

\begin{rem}\label{rem: aftervgtopreg}
The assumption \eqref{eqn: vgtoprega} in Theorem \ref{thm: vgtopreg}
is optimal.
\end{rem}

\begin{exmp}\label{exmp: rescaledsusp}
For $\kappa \in (0,\infty)$,
let $X$ be the $(1/\sqrt{\kappa})$-rescaled space
$(1/\sqrt{\kappa}) (\Sph^{n-1} \ast T)$
of the spherical join $\Sph^{n-1} \ast T$.
The space $X$ is a purely $n$-dimensional, compact, 
geodesically complete $\CAT(\kappa)$ space,
and not a topological $n$-manifold
at any point in the spherical factor $\Sph^{n-1}$.
For every point $x \in X$ in the spherical factor $\Sph^{n-1}$ of $X$,
and for every $r \in (0,D_{\kappa})$,
we have $\Haus^n \bigl( B_r(x) \bigr) / \omega_{\kappa}^n(r) = 3/2$.
\end{exmp}

\begin{exmp}\label{exmp: kcone}
For $\kappa \in (-\infty,0]$,
let $X$ be the $\kappa$-cone $C_{\kappa}(\Sph^{n-2} \ast T)$ 
over $\Sph^{n-2} \ast T$
(see \cite[Definition I.5.6]{bridson-haefliger}).
Since $\Sph^{n-2} \ast T$ is $\CAT(1)$,
the $\kappa$-cone $X$ is a purely $n$-dimensional, proper, 
geodesically complete $\CAT(\kappa)$ space,
and not a topological $n$-manifold
at the vertex $o$ of the cone $X$.
For every $r \in (0,D_{\kappa})$,
we have $\Haus^n \bigl( B_r(o) \bigr) / \omega_{\kappa}^n(r) = 3/2$.
\end{exmp}


\section{A classification of CAT(1) spaces of small volume}

\subsection{Spherical convex subsets}

We begin with the following:

\begin{prop}\label{prop: building}
Let $X$ be a compact, geodesically complete $\CAT(1)$ space
of $\dim X = n$
with decomposition $X = \bigcup_{i=1}^3 \Sigma_i$
for some subsets $\Sigma_1, \Sigma_2, \Sigma_3$ of $X$
satisfying the following:
\begin{enumerate}
\item
$\Sigma_i$ is a closed $\pi$-convex subset in $X$
that is isometric to $\Sph^n$
for all $i \in \{ 1, 2, 3 \}$;
\item
$\Sigma_i \subset \Sigma_j \cup \Sigma_k$
for all $i, j, k \in \{ 1, 2, 3 \}$.
\end{enumerate}
Then $X$ is isometric to either $\Sph^{n-1} \ast T$ or $\Sph^n$.
\end{prop}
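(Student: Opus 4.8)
Given the three pieces $\Sigma_1,\Sigma_2,\Sigma_3$, each isometric to $\Sph^n$, I would first analyze the pairwise intersections $\Sigma_i\cap\Sigma_j$. The plan is to show that $X$ is building-like by verifying the hypotheses of Lemma \ref{lem: buildlike}, then invoke Theorem \ref{thm: buildinglike} to conclude $X$ is a spherical building, and finally use the spherical join decomposition (Theorem \ref{thm: lytchak-sjd}) to pin down the isometry type. The key preliminary observation is that, by (2), each $\Sigma_i$ is covered by the other two, so the intersection locus $\Sigma_i\cap\Sigma_j$ must be large; combined with (3) (the $\pi$-convexity of each union $\Sigma_i\cup\Sigma_j$) this should force each pairwise intersection to be a totally geodesic $\Sph^{n-1}$ sitting inside each $\Sph^n$ as an equator, with the two hemispheres on either side lying in the two respective copies.

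\textbf{Identifying the intersections.}
First I would argue that $\Sigma_i\cap\Sigma_j$ is a closed, $\pi$-convex subset of the round sphere $\Sigma_i\cong\Sph^n$, hence itself a totally geodesic subsphere $\Sph^k\subset\Sph^n$. The covering condition (2) says $\Sigma_1\subset\Sigma_2\cup\Sigma_3$; since $\Sigma_1\cong\Sph^n$ cannot be covered by two proper subspheres of dimension $\le n-2$ (dimension/measure reasons, using that $\dim X=n$ and each $\Sigma_i$ is purely $n$-dimensional), the subspheres $\Sigma_1\cap\Sigma_2$ and $\Sigma_1\cap\Sigma_3$ must each be $(n-1)$-dimensional equators of $\Sigma_1$. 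I would then show these two equators coincide: if they were distinct great $\Sph^{n-1}$'s in $\Sph^n$, their union would not cover $\Sph^n$, contradicting (2). So inside each $\Sigma_i$ there is a single equatorial $\Sph^{n-1}$, call it $E_i$, and $\Sigma_i$ is split by $E_i$ into two hemispheres, one lying in $\Sigma_j$ and one in $\Sigma_k$; moreover all three equators are identified, $E_1=E_2=E_3=:E\cong\Sph^{n-1}$, and $E=\Sigma_i\cap\Sigma_j$ for every pair.

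\textbf{Checking the building-like hypotheses and concluding.}
With this structure, hypotheses (1), (2), (3) of Lemma \ref{lem: buildlike} follow directly (each $\Sigma_i$ being a round sphere is spherical and $\pi$-convex). The one hypothesis requiring real work is (4): for a small triangle $\triangle xyz$ with $x\in\Sigma_i\cap\Sigma_j=E$ and $y,z\in\Sigma_k$, and $p\in yz-\{y,z\}$ landing in $E$, one must show $\angle_p(x,y)+\angle_p(x,z)=\pi$. I expect \emph{this to be the main obstacle}. The idea is that $x\in E$ and $p\in E$ are joined within $E$, and the two directions $y_p',z_p'$ lie in the two hemispheres on opposite sides of $E$; using geodesic completeness together with the lune lemma \cite{ballmann-brin} applied to antipodal structure across $E$ (points of $E$ have their antipodes in $E$, and the geodesic $yz$ crosses $E$ transversally at $p$), one shows the two half-geodesics $py$ and $pz$ continue each other as a single local geodesic, giving the angle sum $\pi$. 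Once Lemma \ref{lem: buildlike} applies, $X$ is building-like, hence by Theorem \ref{thm: buildinglike} a spherical building, and by Theorem \ref{thm: lytchak-sjd} decomposes as a join. A spherical building of dimension $n$ built from exactly three $n$-spheres glued along a common equatorial $\Sph^{n-1}$ is, in the reducible case, exactly $\Sph^{n-1}\ast T$ (three $1$-dimensional "chambers" joined with the $(n-1)$-sphere $E$); the only alternative is that the three hemispheres reassemble without branching into a single round $\Sph^n$. Ruling out any thick irreducible building factor (a building of this combinatorial type with three chambers must be the join with the discrete $T$) then yields that $X$ is isometric to either $\Sph^{n-1}\ast T$ or $\Sph^n$, as claimed.
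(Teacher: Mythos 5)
Your overall skeleton coincides with the paper's proof: verify the hypotheses of Lemma \ref{lem: buildlike}, conclude via Theorem \ref{thm: buildinglike} that $X$ is a spherical building, and finish with Theorem \ref{thm: lytchak-sjd}. But your ``key preliminary observation'' is false, and the step you yourself flag as the main obstacle is built on it. You claim each pairwise intersection $\Sigma_i \cap \Sigma_j$ is a totally geodesic equatorial $\Sph^{n-1}$, common to all three pairs. Test this against the model case $X = \Sph^{n-1} \ast T$: writing $X$ as three closed unit $n$-hemispheres $\Sph^n_{+,1}, \Sph^n_{+,2}, \Sph^n_{+,3}$ glued along their common boundary $\Sph^{n-1}$, the decomposition satisfying the hypotheses is $\Sigma_i = \Sph^n_{+,j} \cup \Sph^n_{+,k}$, so that $\Sigma_i \cap \Sigma_j = \Sph^n_{+,k}$ is a closed \emph{$n$-dimensional hemisphere}, not an $(n-1)$-dimensional equator; only the triple intersection $\Sigma_1 \cap \Sigma_2 \cap \Sigma_3$ is the equator. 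The first error is your assertion that a closed $\pi$-convex subset of $\Sph^n$ must be a subsphere: points, closed balls of radius $\le \pi/2$, and closed hemispheres are all $\pi$-convex without being subspheres. Your picture is moreover internally inconsistent: you say one hemisphere of $\Sigma_i$ lies in $\Sigma_j$, which forces $\Sigma_i \cap \Sigma_j$ to contain that $n$-dimensional hemisphere, contradicting $\Sigma_i \cap \Sigma_j = E \cong \Sph^{n-1}$.

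Because of this, your verification of condition (4) of Lemma \ref{lem: buildlike} --- phrased in terms of ``$yz$ crossing $E$ transversally at $p$'' and ``antipodal structure across $E$'' --- does not get off the ground, and it is in any case only a sketch. The paper's proof of this condition (Claim \ref{clm: building0}) is a genuine case analysis on where $p$ lies, run relative to the three spheres $\Sigma_i$ themselves rather than to a (nonexistent) common equator: if $y \in \Sigma_i$, $z \in \Sigma_j$ and $p \notin \Sigma_i$ (resp.\ $p \notin \Sigma_j$), one picks an auxiliary point of $\Sigma_j$ (resp.\ $\Sigma_i$) on the segment $yz$ and uses sphericity of triangles inside a single $\Sigma$; in the remaining case $p \in \Sigma_1 \cap \Sigma_2 \cap \Sigma_3$, one produces a common antipode $\bar{x} \in \Sigma_i \cap \Sigma_j$ of $x$ (comparing the antipodes of $x$ inside $\Sigma_i$ and inside $\Sigma_j$, which either coincide or force such a point on $\partial B_r(p)$), applies the lune lemma of \cite{ballmann-brin} to the length-$\pi$ geodesic $xp \cup p\bar{x}$ to get $\angle_p(x,y) + \angle_p(\bar{x},y) = \pi$ and $\angle_p(x,z) + \angle_p(\bar{x},z) = \pi$, and concludes with the triangle inequality for angles at $p$. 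So the tools you name (antipodes, the lune lemma) are the right ones, but the argument you would hang them on collapses with the false equator structure; this is a genuine gap, not a stylistic difference.
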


\begin{proof}
If for some distinct $i, j \in \{ 1, 2, 3 \}$
we have $\Sigma_i \subset \Sigma_j$,
then by (1) we have $\Sigma_i = \Sigma_j$,
and hence by (2) we see that
$X$ is isometric to $\Sph^n$.

Assume now that
for all distinct $i, j \in \{ 1, 2, 3 \}$
we have $\Sigma_i \not\subset \Sigma_j$.
Put $\Sigma_{ij} := \Sigma_i \cap \Sigma_j$.
Then $\Sigma_{ij}$
is isometric to a non-empty, proper, closed $\pi$-convex subset of $\Sph^n$,
and hence contained in a closed unit $n$-hemisphere.
Therefore
we have $\Haus^n(\Sigma_{ij}) \le (1/2) \Haus^n(\Sph^n)$.
From (2) we derive $X = \Sigma_{12} \cup \Sigma_{23} \cup \Sigma_{31}$.
Hence we obtain
\[
\Haus^n(X) \le 
\Haus^n(\Sigma_{12}) + \Haus^n(\Sigma_{23}) + \Haus^n(\Sigma_{31})
\le \frac{3}{2} \Haus^n(\Sph^n).
\]
From the present assumption it follows that 
$X$ is not a topological $n$-manifold;
indeed,
for all points in $\Sigma_i - \Sigma_j$,
the closest points on $\Sigma_{ij}$ possess no neighborhoods
homeomorphic to $\R^n$.
Since $X$ is not homeomorphic to $\Sph^n$,
the volume sphere theorem \cite[Theorem 8.3]{lytchak-nagano2}
implies $\Haus^n(X) \ge (3/2) \Haus^n(\Sph^n)$.
Thus for all distinct $i, j$ we have
$\Haus^n(\Sigma_{ij}) = (1/2) \Haus^n(\Sph^n)$;
in particular,
$\Sigma_{ij}$ is isometric to a closed unit $n$-hemisphere.
Since by (2) we have $X = \Sigma_i \cup \Sigma_j$ for all distinct $i, j$,
we conclude that
$X$ is isometric to $\Sph^{n-1} \ast T$.
\end{proof}

\begin{rem}\label{rem: referee}
An anonymous referee tells us the proof of 
Proposition \ref{prop: building} discussed above.
In a previous manuscript of this paper,
when we proved Proposition \ref{prop: building},
we used 
the lune lemma of Ballmann and Brin \cite[Lemma 2.5]{ballmann-brin},
the metric characterizations of spherical buildings 
of Balser and Lytchak \cite[Theorems 1.1 and 1.4]{balser-lytchak},
and the spherical join decomposition theorem of Lytchak \cite[Corollary 1.2]{lytchak}.
\end{rem}

\subsection{Proof of Theorem \ref{thm: critical}}

If $X$ is a purely $1$-dimensional, compact, 
geodesically complete $\CAT(1)$ space 
with $\Haus^1(X) = (3/2) \Haus^1(\Sph^1)$,
and if $X$ has a tripod, then
$X$ is isometric to either the $1$-triplex or $\Sph^0 \ast T$.
For the $2$-dimensional case,
we know the following (\cite[Theorem B]{nagano2}):

\begin{prop}\label{prop: extremal2}
\emph{(\cite{nagano2})}
Let $X$ be a purely $2$-dimensional, compact, 
geodesically complete $\CAT(1)$ space
with $\Haus^2(X) = (3/2) \Haus^2(\Sph^2)$.
If $X$ has a tripod, then
$X$ is isometric to either the $2$-triplex or $\Sph^1 \ast T$.
\end{prop}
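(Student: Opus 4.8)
The plan is to carry out the tripod case of the strategy outlined in the introduction in the bottom dimension, where the two-dimensional classification of Ballmann and Brin \cite{ballmann-brin} is available. Fix a tripod $\{a,b,c\}$, so that $d(a,b),d(b,c),d(c,a)\ge\pi$. First I would extract the rigid metric data from the volume equality. Every $\Sigma_pX$ is one-dimensional, and by Lemma \ref{lem: volgrowth},
\[
\frac{\Haus^1(\Sigma_pX)}{2\pi}\le\frac{\Haus^2(B_r(p))}{\omega_1^2(r)}\le\frac{\Haus^2(X)}{\omega_1^2(r)}\qquad\text{for }r\in(0,\pi);
\]
letting $r\to\pi$, so that $\omega_1^2(r)\to4\pi$, gives $\Haus^1(\Sigma_pX)\le3\pi$ for every $p$. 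Thus each $\Sigma_pX$ is a geodesically complete one-dimensional $\CAT(1)$ graph of length in $[2\pi,3\pi]$, and I expect the same bookkeeping to force the three tripod distances to equal $\pi$ and the diameter of $X$ to equal $\pi$.

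The crux is to promote this into a genuine splitting $X=H_1\cup H_2\cup H_3$ into three closed round hemispheres $H_i$ isometric to $\Sph^2_+$ that meet only along their boundary great circles. For two points at distance $\pi$ the lune lemma \cite{ballmann-brin} says any two geodesics between them bound a spherical lune, so the geodesics issuing from $a$, $b$, $c$ sweep out spherical regions; on these regions the density is critical, and the absolute volume rigidity of Bishop--G\"{u}nther type (Proposition \ref{prop: abvolcomp}) identifies them with pieces of a round sphere. Because $3\,\Haus^2(\Sph^2_+)=3\cdot2\pi=(3/2)\Haus^2(\Sph^2)=\Haus^2(X)$, the area budget is exactly three hemispheres: the $H_i$ cover $X$ with pairwise intersections of vanishing $\Haus^2$-measure, so there is no further two-dimensional material and all overlaps are one-dimensional seams. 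In particular $X$ is a purely two-dimensional spherical polyhedron of diameter $\pi$.

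It then remains to classify the gluing of the three hemispheres along their boundary circles, which is detected at the singular set. By the length bound above, the space of directions at a singular point is a geodesically complete one-dimensional $\CAT(1)$ graph of length $3\pi$, and there are exactly two of these, namely $\Sph^0\ast T$ and the circle of length $3\pi$. Correspondingly I would separate two configurations. If the three boundary circles coincide in a single great circle $E$, then the three full spheres $\Sigma_i:=H_j\cup H_k$ are $\pi$-convex copies of $\Sph^2$ satisfying the hypotheses of Proposition \ref{prop: building}; since a round sphere admits no tripod, that proposition yields $X$ isometric to $\Sph^1\ast T$. If instead the boundary circles pairwise share a half-circle and together form the skeleton $\Sph^0\ast T$, then reassembling the three hemispheres along this skeleton reproduces exactly the $2$-triplex. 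All intermediate seam patterns are excluded, either directly by the Ballmann--Brin classification \cite{ballmann-brin} of purely two-dimensional compact geodesically complete $\CAT(1)$ spherical polyhedra of diameter $\pi$, or by the building-like machinery (Lemma \ref{lem: add}, Proposition \ref{prop: building}, Theorem \ref{thm: buildinglike}), since any other pattern would either violate geodesic completeness or contradict the local sphericity established above.

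The step I expect to be the main obstacle is the second paragraph: upgrading the pointwise rigidity of Proposition \ref{prop: abvolcomp} to a clean global decomposition into exactly three embedded hemispheres with only one-dimensional overlaps, while ruling out the possibility of a more fragmented spherical polyhedron of the same total area. The volume equality supplies the correct global budget, but organizing the spherical lunes into three coherently embedded hemispheres is delicate, and it is precisely here, and in the ensuing seam analysis, that the dimension-two classification of Ballmann and Brin does the decisive work.
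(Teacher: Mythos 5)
First, note that the paper itself does not prove this proposition: it is imported from \cite{nagano2} (Theorem B there), precisely so that the proof of Proposition \ref{prop: extremal} may assume $n \ge 3$. Your outline therefore has to be measured against the paper's higher-dimensional template, which it mirrors; and its first half is sound --- indeed easier than you fear. Since $d(p_i,p_j) \ge \pi$, the open balls $U_{\pi/2}(p_i)$ are pairwise disjoint; Proposition \ref{prop: abvolcomp} gives $\Haus^2\bigl(B_{\pi/2}(p_i)\bigr) \ge \omega_1^2(\pi/2) = 2\pi$ for each $i$, so the hypothesis $\Haus^2(X) = 6\pi$ forces equality throughout; the rigidity clause of Proposition \ref{prop: abvolcomp} (using pure $2$-dimensionality) then makes each $B_{\pi/2}(p_i)$ a round closed hemisphere, and the complement of their union, being an open set of measure zero in a purely $2$-dimensional space, is empty. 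So what you call the ``main obstacle'' in your final paragraph is not an obstacle at all; it is exactly the step the paper performs for every $n$.

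The genuine gap is the exclusion of ``intermediate seam patterns,'' and neither tool you invoke can close it. The building-like machinery is unavailable in this dimension: Theorem \ref{thm: localtoglobal}, hence Lemma \ref{lem: buildlike} and Proposition \ref{prop: building}, requires dimension $\ge 2$, which is why the paper applies it to the seam $Y = \Sigma_1 \cup \Sigma_2 \cup \Sigma_3$ of dimension $n-1$ and must treat $n=2$ separately by citing \cite{nagano2}. Applying it to $X$ itself can never work, because the $2$-triplex --- one of your two targets --- is geodesically complete yet not a spherical building (its two cone points have links that are circles of length $3\pi$, whereas links in a spherical building are again buildings, of diameter $\pi$), hence by Theorem \ref{thm: buildinglike} it is not building-like; any argument showing your $X$ to be building-like would wrongly eliminate the triplex. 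The Ballmann--Brin classification \cite{ballmann-brin} concerns polyhedra of diameter $\pi$, and you never establish that $X$ has diameter $\pi$ (you only ``expect'' it); proving this from the hemisphere decomposition already requires knowing how the three boundary circles meet, i.e.\ it is the seam analysis itself, so this route is circular. Likewise, your claim that singular links have length exactly $3\pi$ ``by the length bound'' is unjustified: the bound only gives $\le 3\pi$. What actually closes the gap is elementary and should be made explicit: each $\Sigma_i = \partial B_{\pi/2}(p_i)$ is $\pi$-convex (the ball is convex and equatorial arcs are the unique short geodesics), so each $\Sigma_i \cap \Sigma_j$ is a closed $\pi$-convex subset of a circle of length $2\pi$, i.e.\ empty, an arc of length $\le \pi$, a pair of antipodal points, or all of $\Sigma_i$; geodesic completeness gives $\Sigma_i \subset \Sigma_j \cup \Sigma_k$; and a circle of length $2\pi$ can be covered by two such proper closed subsets only by two complementary semicircles. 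Hence either all three circles coincide, giving $\Sph^1 \ast T$, or all pairwise intersections are semicircles meeting along a common antipodal pair, giving the theta-graph seam $\Sph^0 \ast T$ and the $2$-triplex. With that covering argument inserted your outline becomes a proof; without it, the decisive step is missing.
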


To finish the proof of Theorem \ref{thm: critical},
we show:

\begin{prop}\label{prop: extremal}
Let $X$ be a purely $n$-dimensional, compact, 
geodesically complete $\CAT(1)$ space
satisfying \eqref{eqn: criticala}.
If $X$ has a tripod, then
$X$ is isometric to either the $n$-triplex or $\Sph^{n-1} \ast T$.
\end{prop}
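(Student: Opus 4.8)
The plan is to convert the volume equality into rigidity via the Bishop--G\"{u}nther comparison, splitting $X$ into three round hemispheres, and then to determine the gluing pattern by examining how the boundary spheres of these hemispheres overlap. Throughout I assume $n\ge 2$, the cases $n=1$ and $n=2$ being the one-dimensional statement recalled above and Proposition~\ref{prop: extremal2}; these also serve as the base of an induction. Since $X$ is purely $n$-dimensional, every point $x$ satisfies $\dim\Sigma_xX=n-1$, so Proposition~\ref{prop: abvolcomp} applies at each point. Let $a_1,a_2,a_3$ be a tripod and set $H_i:=B_{\pi/2}(a_i)$; then $\Haus^n(H_i)\ge\omega_1^n(\pi/2)=\tfrac12\Haus^n(\Sph^n)$. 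As $d(a_i,a_j)\ge\pi$, the triangle inequality forces $H_i\cap H_j$ to lie in the metric sphere $\partial B_{\pi/2}(a_i)$, hence to be $\Haus^n$-null, so $\Haus^n\bigl(\bigcup_i H_i\bigr)=\sum_i\Haus^n(H_i)\ge\tfrac32\Haus^n(\Sph^n)=\Haus^n(X)$. Since $\bigcup_i H_i\subseteq X$, equality holds throughout: each $\Haus^n(H_i)=\tfrac12\Haus^n(\Sph^n)$, so by the rigidity clause of Proposition~\ref{prop: abvolcomp} each $H_i$ is isometric to a closed hemisphere $\Sph^n_+$, while $X\setminus\bigcup_i H_i$ is open and $\Haus^n$-null, hence empty by pure $n$-dimensionality. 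Thus $X=H_1\cup H_2\cup H_3$ and $d(a_i,a_j)=\pi$.

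Next I would analyse the equators $S_i:=\partial H_i\cong\Sph^{n-1}$. Geodesic completeness lets every radius $a_ib$ with $b\in S_i$ be prolonged past $b$ to a point at distance $>\pi/2$ from $a_i$, which therefore lies in $H_j$ or $H_k$; shrinking the prolongation back to $b$ gives $b\in S_j\cup S_k$, so $S_i\subseteq S_j\cup S_k$. For $b\in S_i\cap S_j$ the concatenation $a_ib\cup ba_j$ has length $\pi=d(a_i,a_j)$ and is thus a geodesic, so $S_i\cap S_j$ is exactly the midpoint set of the length-$\pi$ geodesics from $a_i$ to $a_j$; the lune lemma \cite[Lemma 2.5]{ballmann-brin}, together with Lemma~\ref{lem: add}, then shows $S_i\cap S_j$ is spherically convex in $S_i$. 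Since $S_i=(S_i\cap S_j)\cup(S_i\cap S_k)$ is a union of two closed spherically convex sets, a sphere-covering argument, after ruling out (by geodesic completeness of the spaces of directions) the configuration in which exactly two of the $S_i$ coincide, yields a clean dichotomy: either $S_1=S_2=S_3$, or the $S_i$ are pairwise distinct, each $S_i\cap S_j$ is a closed hemisphere of $S_i$, and all three share one common great subsphere $E:=S_1\cap S_2\cap S_3\cong\Sph^{n-2}$.

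If $S_1=S_2=S_3$, I would set $\Sigma_i:=H_j\cup H_k$. Each $\Sigma_i$ is a pair of hemispheres glued along their common equator, hence isometric to $\Sph^n$ and, by the symmetric three-hemisphere picture, $\pi$-convex; the inclusions $\Sigma_i\subseteq\Sigma_j\cup\Sigma_k$ and the $\pi$-convexity of $\Sigma_i\cup\Sigma_j=X$ are immediate. Proposition~\ref{prop: building} then gives $X\cong\Sph^{n-1}\ast T$ or $X\cong\Sph^n$; as $\Sph^n$ has no three pairwise antipodal points it carries no tripod, so $X\cong\Sph^{n-1}\ast T$. In the remaining case the $S_i$ are pairwise distinct, and here I would show that the common equator $E\cong\Sph^{n-2}$ is a spherical join factor, $X\cong E\ast X'$ with $X'$ a one-dimensional compact geodesically complete $\CAT(1)$ space; recognising this splitting is where Lytchak's decomposition theorem (Theorem~\ref{thm: lytchak-sjd}) enters. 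Granting it, join multiplicativity of volume gives $\Haus^1(X')=\tfrac32\Haus^1(\Sph^1)$, and $X'$ inherits a tripod, so the one-dimensional case forces $X'$ to be either the $1$-triplex or $\Sph^0\ast T$; the latter would give $X\cong\Sph^{n-2}\ast\Sph^0\ast T\cong\Sph^{n-1}\ast T$, whose equators coincide, contradicting the present case. Hence $X'$ is the $1$-triplex and $X\cong\Sph^{n-2}\ast(\text{$1$-triplex})$ is the $n$-triplex.

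The main obstacle is precisely the geometric analysis of how the three hemispheres meet: showing that the pairwise intersections of the equators are exactly hemispheres with a single common great subsphere $E$, and, in the non-building case, promoting $E$ to a genuine spherical join factor so that Theorem~\ref{thm: lytchak-sjd} and a reduction to the one-dimensional case can conclude. The building case, by contrast, is reduced cleanly to Proposition~\ref{prop: building} once the three equators are seen to coincide. (One may alternatively bypass the explicit construction of $E$ by applying Theorem~\ref{thm: lytchak-sjd} to $X$ directly: the ball argument of the first paragraph shows that any join factor carrying a tripod has volume ratio at least $\tfrac32$, so the ratios multiplying to $\tfrac32$ localises the tripod to a single irreducible factor $F$ of ratio $\tfrac32$ with all other factors round spheres; the lower-dimensional cases of the theorem show that a critical factor of dimension $\ge2$ with a tripod would be a join, forcing $\dim F\le 1$ and hence $F\cong\text{$1$-triplex}$ or $F\cong T$ — but this route still requires first establishing that $X$ itself is reducible, which is exactly the hemisphere-meeting analysis flagged above.)
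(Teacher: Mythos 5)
Your first paragraph is sound and agrees with the paper's opening move: the volume rigidity of Proposition \ref{prop: abvolcomp} splits $X$ into three closed unit hemispheres $H_i = B_{\pi/2}(a_i)$ meeting only along their equators $S_i$, and geodesic completeness gives $S_i \subseteq S_j \cup S_k$. (Incidentally, convexity of $S_i\cap S_j$ is immediate: $H_i\cap H_j = S_i\cap S_j$ is an intersection of two convex balls.) The genuine gap is in your second case, which is the heart of the proposition, since it is the case that produces the $n$-triplex: there you explicitly \emph{grant} that the common great subsphere $E \cong \Sph^{n-2}$ is a spherical join factor of $X$, and you assert that Theorem \ref{thm: lytchak-sjd} supplies this. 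It does not. Lytchak's theorem is an existence statement for an abstract decomposition into irreducible factors; it provides no criterion for a \emph{given} isometrically embedded round sphere to be a join factor, and it says nothing if $X$ happens to be irreducible (the decomposition is then trivial). Embedded round spheres do not force splittings: apartments in a thick irreducible spherical building are isometrically embedded round spheres of full dimension, yet the building is not a join over any of them. So the step you yourself flag as "the main obstacle" -- both in the main argument and in your parenthetical alternative, which is circular for the same reason (it needs reducibility of $X$ as an input) -- is exactly what is missing, and nothing in the proposal closes it.

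The paper closes it by never attempting to split $X$ at all. It passes to the union of the equators $Y = S_1 \cup S_2 \cup S_3$: a short argument (a geodesic between a point of $S_i - S_j$ and a point of $S_j - S_i$ lies in $S_k$) shows each $S_i \cup S_j$, and hence $Y$, is $\pi$-convex in $X$, so $Y$ is a compact, geodesically complete $\CAT(1)$ space of dimension $n-1 \ge 2$ covered by three round $(n-1)$-spheres. Proposition \ref{prop: building} is then applied to $Y$: its proof verifies via Lemma \ref{lem: add}, the lune lemma of \cite{ballmann-brin}, and Lemma \ref{lem: buildlike} (Claim \ref{clm: building0}) that $Y$ is building-like, recognizes $Y$ as a spherical building by the Balser--Lytchak theorem (Theorem \ref{thm: buildinglike}), and only then invokes Theorem \ref{thm: lytchak-sjd}, which \emph{is} effective once one knows the space is a building. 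This yields $Y \cong \Sph^{n-2} \ast T$ or $Y \cong \Sph^{n-1}$, from which $X$ is read off. Note that this handles both of your cases uniformly, so your dichotomy on the $S_i$ is not needed; your "all equators equal" case is essentially Proposition \ref{prop: building} applied one dimension up (with the pieces $H_j \cup H_k$), but in the critical pairwise-distinct case you would likewise have to run the building-recognition machinery on $Y$ rather than appeal to an unproven splitting of $X$.
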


\begin{proof}
By Proposition \ref{prop: extremal2}, we may assume $n \ge 3$.
Let $p_1, p_2, p_3 \in X$ be elements of a tripod.
By Proposition \ref{prop: abvolcomp} and \eqref{eqn: criticala},
we see that $X$ has the decomposition
$X = \bigcup_{i=1}^3 B_{\pi/2}(p_i)$
such that
$B_{\pi/2}(p_i)$ is isometric to a closed unit $n$-hemisphere for each $i \in \{ 1, 2, 3 \}$;
in particular,
$\partial B_{\pi/2}(p_i)$ is isometric to $\Sph^{n-1}$.
Put $\Sigma_i := \partial B_{\pi/2}(p_i)$.

Let $Y := \bigcup_{i=1}^3 \Sigma_i$.
Note that $\Sigma_i$ is a closed $\pi$-convex subset in $X$
for each $i \in \{ 1, 2, 3 \}$.
The geodesical completeness of $X$ implies that
$\Sigma_i$ is contained in $\Sigma_j \cup \Sigma_k$
for all $i, j, k \in \{ 1, 2, 3 \}$.

We show that $\Sigma_i \cup \Sigma_j$ is $\pi$-convex in $X$
for all distinct $i, j \in \{ 1, 2, 3 \}$.
For distinct $i, j \in \{ 1, 2, 3 \}$,
take $y_1, y_2$ in $\Sigma_i \cup \Sigma_j$
with $d(y_1,y_2) < \pi$,
and let $y_1y_2$ be the geodesic joining them.
We may assume that
$y_1 \in \Sigma_i - \Sigma_j$ and $y_2 \in \Sigma_j - \Sigma_i$.
By the geodesical completeness of $X$,
the points $y_1$ and $y_2$ must belong to $\Sigma_k$
for $k \in \{ 1, 2, 3 \}$ distinct to $i, j$.
The $\pi$-convexity of $\Sigma_k$ in $X$
implies that $y_1y_2$ is contained in $\Sigma_k$,
and hence $y_1y_2$ is contained in $\Sigma_i \cap \Sigma_j$;
indeed, the set $\Sigma_k$ is contained in $\Sigma_i \cup \Sigma_j$.
Hence $\Sigma_i \cup \Sigma_j$ is $\pi$-convex.

Since $\Sigma_i \cup \Sigma_j$ is $\pi$-convex in $X$
for all distinct $i, j \in \{ 1, 2, 3 \}$,
so is the whole $Y$.
Note that every closed $\pi$-convex subspace of a $\CAT(1)$ space 
is also $\CAT(1)$.
Hence the subspace $Y$ is a $\CAT(1)$ space of $\dim Y = n-1$.
The present assumption $n \ge 3$ implies $\dim Y \ge 2$.
Observe that $Y$ is geodesically complete,
and $\Sigma_i$ is also closed and $\pi$-convex in $Y$ 
for all $i \in \{ 1, 2, 3 \}$.
By Proposition \ref{prop: building},
we see that $Y$ is isometric to either $\Sph^{n-2} \ast T$ or $\Sph^{n-1}$.
If $Y$ is $\Sph^{n-2} \ast T$, then $X$ is isometric to the $n$-triplex.
If $Y$ is $\Sph^{n-1}$, then $X$ is isometric to $\Sph^{n-1} \ast T$.
\end{proof}

Proposition \ref{prop: extremal} 
and the capacity sphere theorem
\cite[Theorem 1.5]{lytchak-nagano2}
complete the proof of Theorem \ref{thm: critical}.
\qed


\section{A sphere theorem for CAT(1) homology manifolds}

\subsection{Homology manifolds}

Let $H_{\ast}$ denote the singular homology with $\Z$-coefficients.
A locally compact, separable metric space $M$ is said to be a
\emph{homology $n$-manifold} 
if for every $p \in M$ 
the local homology
$H_{\ast}(M,M - \{ p \})$ at $p$
is isomorphic to $H_{\ast}(\R^n,\R^n - \{ 0 \})$,
where $0$ is the origin of $\R^n$.
A homology $n$-manifold $M$ is a 
\emph{generalized $n$-manifold} 
if $M$ is an $\ANR$ of $\dim M < \infty$.
Every generalized $n$-manifold has dimension $n$.
Due to the theorem of Moore (see \cite[Chapter IV]{wilder}),
for each $n \in \{ 1, 2 \}$,
every generalized $n$-manifold is a topological $n$-manifold.

Every homology $n$-manifold with an upper curvature bound
is a geodesically complete generalized $n$-manifold.
Thurston \cite[Theorem 3.3]{thurston} proved that
every homology $3$-manifold with an upper curvature bound
is a topological $3$-manifold.
We refer the readers to \cite{lytchak-nagano2} for advanced studies
of homology manifolds with an upper curvature bound.

We recall the following 
(\cite[Lemma 3.1 and Corollary 3.4]{lytchak-nagano2}):

\begin{prop}\label{prop: hm}
{\em (\cite{lytchak-nagano2})}
Let $X$ be a metric space with an upper curvature bound.
A locally compact open subset $M$ of $X$
is a homology $n$-manifold
if and only if 
for every $p \in M$ the space $\Sigma_pX$
has the same homology as $\Sph^{n-1}$;
in this case,
$\Sigma_pX$ is a homology $(n-1)$-manifold
and $T_pX$ is a homology $n$-manifold.
\end{prop}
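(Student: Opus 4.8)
The plan is to reduce the entire statement to a single local computation relating the local homology at a point to the homology of its space of directions, and then to read off all assertions from the cone and spherical-suspension structure of tangent spaces. Concretely, I would first establish, for every $p$ in a locally compact metric space $X$ with an upper curvature bound, the natural isomorphism
\begin{equation}
H_k(M, M - \{p\}) \cong \tilde{H}_{k-1}(\Sigma_pX)
\label{eqn: loclink}
\end{equation}
(reduced homology, with a degree shift by one). Since $M$ is open in $X$, excision gives $H_*(M, M-\{p\}) \cong H_*(X, X-\{p\})$, and a further excision identifies this with $H_*(U_r(p), U_r(p)-\{p\})$ for a small convex $\CAT(\kappa)$ ball $U_r(p)$. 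The ball $U_r(p)$ is contractible via the geodesic contraction toward $p$, and the radial structure of geodesics emanating from $p$ exhibits a deformation retraction of the punctured ball $U_r(p) - \{p\}$ onto the metric sphere $\partial B_{\epsilon}(p)$, which the direction map $x \mapsto x_p'$ carries onto $\Sigma_pX$ up to homotopy. Feeding these into the long exact sequence of the pair $(U_r(p), U_r(p)-\{p\})$ yields \eqref{eqn: loclink}.

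With \eqref{eqn: loclink} in hand the equivalence is immediate. The local homology $H_*(\R^n, \R^n - \{0\})$ is $\Z$ in degree $n$ and vanishes otherwise, so by \eqref{eqn: loclink} the condition $H_*(M, M-\{p\}) \cong H_*(\R^n, \R^n - \{0\})$ holds exactly when $\tilde{H}_j(\Sigma_pX) = \Z$ for $j = n-1$ and $\tilde{H}_j(\Sigma_pX) = 0$ otherwise, that is, when $\Sigma_pX$ has the same homology as $\Sph^{n-1}$. Imposing this at every $p \in M$ gives the asserted characterization.

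For the supplementary conclusions I would observe that \emph{both} reduce to the single assertion that $\Sigma_u\Sigma_pX$ has the homology of $\Sph^{n-2}$ for every $p \in M$ and every $u \in \Sigma_pX$. Indeed, $T_pX = C(\Sigma_pX)$ is a $\CAT(0)$ space, and applying the characterization inside it, its space of directions at the vertex $o_p$ is $\Sigma_pX$, while at a point $tu$ with $t > 0$ the standard local structure of a Euclidean cone gives $\Sigma_{tu}T_pX = \Sph^0 \ast \Sigma_u\Sigma_pX$, whose reduced homology is that of $\Sph^{n-1}$ precisely when $\Sigma_u\Sigma_pX$ is a homology $(n-2)$-sphere; the same condition, via the characterization applied inside the $\CAT(1)$ space $\Sigma_pX$, is exactly what makes $\Sigma_pX$ a homology $(n-1)$-manifold. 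To produce the homology of $\Sph^{n-2}$ I would invoke the hypothesis that $X$ is already a homology $n$-manifold at the \emph{nearby} points: fix a geodesic $\gamma$ with $\gamma(0) = p$ and initial direction $u$, and set $q = \gamma(t)$ for small $t > 0$. Because $q$ is interior to $\gamma$, the forward and backward directions of $\gamma$ are antipodal in $\Sigma_qX$; since $X$ is geodesically complete, $\Sigma_qX$ then splits as a spherical suspension $\Sph^0 \ast E_q$ of its transverse link $E_q$. As $q \in M$, the space $\Sigma_qX$ has the homology of $\Sph^{n-1}$, so $E_q$ has the homology of $\Sph^{n-2}$, and it remains to identify $E_q$, homologically, with $\Sigma_u\Sigma_pX$.

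The main obstacle is exactly this last identification: the transverse link $E_q$ of a geodesic at an interior point is not literally isometric to the iterated space of directions $\Sigma_u\Sigma_pX$, and spaces of directions are in general unstable under Gromov--Hausdorff limits. I would control it by a blow-up argument, rescaling $(X, p)$ so that $q = \gamma(t)$ converges to the unit-distance point $u \in \Sigma_pX \subset T_pX$ and $\Sigma_qX$ approaches $\Sph^0 \ast \Sigma_u\Sigma_pX$; combined with the uniform geometric control available for compact geodesically complete $\CAT(1)$ spaces (in the spirit of the dimension-stability Lemmas \ref{lem: stab} and \ref{lem: npure}), this forces the homologies of $E_q$ and $\Sigma_u\Sigma_pX$ to agree for small $t$. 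Organising the whole argument as an induction on $n$, with the low-dimensional base cases supplied by Moore's theorem for generalized $1$- and $2$-manifolds, then simultaneously delivers that each $\Sigma_pX$ is a homology $(n-1)$-manifold and, by the equivalence applied to $T_pX$, that $T_pX$ is a homology $n$-manifold.
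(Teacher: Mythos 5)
This proposition is not proved in the paper at all: it is quoted verbatim from Lytchak--Nagano \cite[Lemma 3.1 and Corollary 3.4]{lytchak-nagano2}, so your attempt has to be measured against that source. There, the ``if and only if'' is exactly your isomorphism $H_k(M,M-\{p\})\cong\tilde{H}_{k-1}(\Sigma_pX)$, but this isomorphism is not the routine exercise your first paragraph makes it appear: it is a theorem of Kramer \cite{kramer}. Your sketch of it breaks at the retraction step. Without geodesic completeness you cannot push points of $U_r(p)-\{p\}$ outward onto a metric sphere (geodesics need not extend); you can only retract inward onto smaller punctured balls. Moreover the direction map $x\mapsto x_p'$, although continuous, has image the possibly proper dense subset $\Sigma_p'X\subset\Sigma_pX$ and can have enormous fibers: in the $\CAT(0)$ region between the $x$-axis and the parabola $y=x^2$ (intrinsic metric), \emph{all} directions at the cusp coincide, so the fiber is the entire punctured neighborhood. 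Declaring this map a homotopy equivalence ``up to homotopy'' is precisely the content of Kramer's theorem, not a proof of it; this part of your argument is repairable only by citing \cite{kramer}, which is what \cite{lytchak-nagano2} does.

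The serious error is in the supplementary part. Your claim that at an interior point $q$ of a geodesic in a geodesically complete space the link splits as $\Sigma_qX=\Sph^0\ast E_q$, merely because it contains a pair of antipodes, is false. Take $X$ to be the Euclidean cone over a circle of length $2\pi+a$ with $a>0$: this is a geodesically complete $\CAT(0)$ space homeomorphic to $\R^2$ (hence a homology $2$-manifold, so it even satisfies your standing hypotheses), the apex $q$ lies in the interior of many geodesics, yet $\Sigma_qX$ is a circle of length $2\pi+a$, which is not a spherical suspension---any join $\Sph^0\ast E$ has diameter $\pi$, while this circle has diameter $\pi+a/2$. The suspension splitting you implicitly invoke requires every point of $\Sigma_qX$ to lie on a geodesic between the two antipodes, and that is exactly what fails. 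Consequently $E_q$ need not exist, and the step you yourself flag as the ``main obstacle''---identifying transverse links with $\Sigma_u\Sigma_pX$ homologically via a blow-up---is not filled by appealing to dimension stability: singular homology is not continuous under Gromov--Hausdorff convergence without uniform contractibility control, and spaces of directions are only semicontinuous; this is where \cite{lytchak-nagano1} and \cite{lytchak-nagano2} invest their real technical work. A route that actually closes the argument (and is essentially the one behind the cited Corollary 3.4) avoids transverse links entirely: first show that a homology manifold with an upper curvature bound is geodesically complete; then realize $T_pX$ as the pointed Gromov--Hausdorff limit of the rescalings $(\lambda X,p)$ as $\lambda\to\infty$, which are homology $n$-manifolds with the uniform contractibility function coming from the $\CAT(\kappa)$ condition, and apply the stability of homology manifolds under such limits (the paper's Lemma \ref{lem: stabcathm}) to conclude that $T_pX$ is a homology $n$-manifold; finally use the homeomorphism $T_pX-\{o_p\}\approx\Sigma_pX\times\R$ to conclude that $\Sigma_pX$ is a homology $(n-1)$-manifold, its sphere homology being supplied by Kramer's isomorphism.
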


\begin{rem}\label{rem: afterhm}
Lytchak and the author \cite[Theorems 1.2 and 6.5]{lytchak-nagano2} 
have proved that
for every homology $n$-manifold $M$ with an upper curvature bound
there exists a locally finite subset $E$ of $M$ such that
$M-E$ is a topological $n$-manifold;
moreover, every point in $M$ has a neighborhood
homeomorphic to some cone over a closed topological $(n-1)$-manifold.
\end{rem}

\subsection{Locally geometrical contractivity}

Following the terminology in \cite{grove-petersen-wu} and \cite{petersen}, we say that 
a function $\rho \colon [0,r) \to [0,\infty)$ with $\rho(0) = 0$ is a
\emph{contractivity function}
if $\rho$ is continuous at $0$,
and if $\rho \ge \id_{[0,r)}$,
where $\id_{[0,r)}$ is the identity function on $[0,r)$.
For a contractivity function $\rho \colon [0,r) \to [0,\infty)$,
a metric space $X$ is 
\emph{$\LGC(\rho)$},
\emph{locally geometrically contractible with respect to $\rho$},
if for every $p \in X$ and for every $s \in (0,r)$
the ball $B_s(p)$ is contractible inside 
the concentric ball $B_{\rho(s)}(p)$.
Every $\CAT(\kappa)$ space
is $\LGC(\id_{[0,D_{\kappa})})$.

We recall the following,
which is just a combination of
the theorem of Grove, Petersen, and Wu \cite[Theorem 2.1]{grove-petersen-wu} 
and the theorems of Petersen \cite[Theorem A, and Theorem in Section 5]{petersen}.

\begin{thm}\label{thm: gpw}
\emph{(\cite{grove-petersen-wu}, \cite{petersen})}
Let $\rho \colon [0,r) \to [0,\infty)$ be a contractivity function.
If a sequence of compact $\LGC(\rho)$ spaces $X_i$, $i = 1, 2, \dots$,  
of dimension $\le n$ converges to
some compact metric space $X$ of finite dimension
in the Gromov-Hausdorff topology,
then 
\begin{enumerate}
\item
$X$ is an $\LGC(\rho)$ space of $\dim X \le n$;
\item
$X$ is homotopy equivalent to $X_i$
for all sufficiently large $i$;  
\item
if in addition each $X_i$ is a topological $n$-manifold,
then $X$ is a generalized $n$-manifold.
\end{enumerate}
\end{thm}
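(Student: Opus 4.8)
The plan is to assemble the statement from the two cited sources, whose common engine is the Grove--Petersen--Wu comparison principle: two compact $\LGC(\rho)$ spaces that are sufficiently Gromov--Hausdorff close, with the same contractivity function $\rho$, are homotopy equivalent, the closeness threshold depending only on $\rho$ and an upper dimension bound. First I would establish (1). The key point is that the defining inequality of $\LGC(\rho)$---that $B_s(p)$ contracts inside $B_{\rho(s)}(p)$---is a quantitatively local, and hence Gromov--Hausdorff stable, condition. Given $\varepsilon_i$-Hausdorff approximations realizing $d_{GH}(X_i,X)\to 0$, a ball $B_s(p)\subset X$ is shadowed by balls $B_{s+\varepsilon_i}(p_i)\subset X_i$; transporting the $X_i$-contractions back through the approximations and passing to a limit yields a contraction of $B_s(p)$ inside $B_{\rho(s)}(p)$, so that $X$ is $\LGC(\rho)$. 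The bound $\dim X \le n$ follows because dimension is upper semicontinuous on the class of $\LGC(\rho)$ spaces: the uniform local contractibility forbids the limit from developing richer local structure than the approximants.

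Next, for (2), I would invoke the comparison principle directly. Since $X$ is $\LGC(\rho)$ of dimension $\le n$ by (1), and $d_{GH}(X_i,X)\to 0$, for all large $i$ the pair $(X_i,X)$ lies below the comparison threshold, whence $X$ and $X_i$ are homotopy equivalent. The construction of the homotopy equivalence is the technical heart: one chooses a fine cover of each space, forms the associated nerve, uses a Gromov--Hausdorff approximation to define a map on vertices, and extends over skeleta one dimension at a time, each extension being possible because $\rho$ lets one fill in the image of a simplex inside a controlled concentric ball. Running this in both directions and verifying that the composites are $\rho$-controlled homotopic to the identities produces mutually inverse homotopy equivalences.

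Finally, for (3), the hypothesis that each $X_i$ is a topological $n$-manifold must be promoted to a statement about the local homology of $X$. Here I would localize the comparison: small metric balls in $X$ are homotopy equivalent, through the same nerve-and-filling machinery applied to concentric balls, to small balls in $X_i$, and likewise for the punctured balls. Since balls in a topological $n$-manifold are contractible while punctured balls carry the homology of $\Sph^{n-1}$, this transfer computes $H_\ast(X, X-\{p\})\cong H_\ast(\R^n,\R^n-\{0\})$ for every $p\in X$, so that $X$ is a homology $n$-manifold. Combined with the fact that a finite-dimensional $\LGC(\rho)$ space is an $\ANR$, one concludes that $X$ is a generalized $n$-manifold.

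I expect the main obstacle to be the simplex-filling construction underlying (2) and (3): controlling, uniformly in the skeletal dimension and in $i$, that the extensions stay inside the prescribed balls and that the two composites are homotopic to the identities. The local version required for (3)---matching punctured balls up to homotopy while tracking the radius inflation produced by $\rho$---is the most delicate bookkeeping, since the homotopies must be confined to concentric balls small enough to detect the local homology at each point.
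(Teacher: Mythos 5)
First, a point of calibration: the paper does not prove this theorem at all. It is quoted as a combination of results of Grove--Petersen--Wu \cite{grove-petersen-wu} and Petersen \cite{petersen}, so the only thing your sketch can be compared with is the cited literature, whose overall architecture (nerve of a fine cover, extension over skeleta by $\rho$-controlled filling, transfer of local data through controlled equivalences) your proposal does reproduce at the level of slogans.

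Two of your steps, however, have genuine gaps. The serious one is in (1): you claim $\dim X \le n$ because ``dimension is upper semicontinuous on the class of $\LGC(\rho)$ spaces,'' and you never use the hypothesis, written explicitly into the statement, that the limit $X$ has \emph{finite} dimension. That hypothesis is there precisely because unconditional semicontinuity is not available: this is the issue behind the Erratum to \cite{grove-petersen-wu} (cited in the bibliography). What the controlled maps actually give is a domination of $X$ by $n$-dimensional polyhedra, hence a bound on the \emph{cohomological} dimension of $X$; passing from cohomological to covering dimension requires knowing beforehand that $\dim X < \infty$ (Alexandroff's theorem), and ``uniform local contractibility forbids richer local structure'' is not an argument. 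Relatedly, your proof that $X$ is $\LGC(\rho)$ by ``transporting the $X_i$-contractions back through the approximations and passing to a limit'' does not work as stated: Gromov--Hausdorff approximations are not continuous maps, contractions cannot be pulled back through them, and the contractions in $X_i$ have no equicontinuity allowing a limit; in the cited papers the local contractibility of the limit is itself extracted from the nerve-and-filling machinery, not from a limiting argument. A smaller but real error occurs in (3): metric balls in a compact topological $n$-manifold that is merely $\LGC(\rho)$ need not be contractible, and punctured metric balls need not carry the homology of $\Sph^{n-1}$; what the argument must use instead is the local homology $H_\ast(X_i, X_i - \{q\}) \cong H_\ast(\R^n,\R^n - \{0\})$ furnished by excision in the manifolds $X_i$, transferred to the limit through the controlled $\varepsilon$-equivalences --- exactly the delicate bookkeeping you anticipate at the end, but set up there on a false premise.
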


For sequences of compact $\CAT(\kappa)$ homology $n$-manifolds,
we have the following
(\cite[Lemma 3.3]{lytchak-nagano2}):

\begin{lem}\label{lem: stabcathm}
\emph{(\cite{lytchak-nagano2})}
If a sequence of compact $\CAT(\kappa)$ homology $n$-manifolds 
$X_i$, $i = 1, 2, \dots$, converges to
some compact metric space $X$ in the Gromov-Hausdorff topology,
then $X$ is a homology $n$-manifold
\end{lem}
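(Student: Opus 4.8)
The plan is to reduce the homology-manifold property of the limit to a statement about a single space of directions at each point, and then to transport the correct homology from the approximating links using the homotopy stability of Grove--Petersen--Wu. First I would record that the limit $X$ is again a compact, geodesically complete $\CAT(\kappa)$ space: completeness and the $\CAT(\kappa)$ condition pass to Gromov--Hausdorff limits, and the geodesic completeness of compact $\CAT(\kappa)$ spaces is preserved under such limits as recalled in the preliminaries. Since each $X_i$ is a generalized $n$-manifold we have $\dim X_i = n$, so Lemma~\ref{lem: stab} gives $\dim X = n$. By Proposition~\ref{prop: hm} it then suffices to prove that for every $p \in X$ the space of directions $\Sigma_pX$ has the same homology as $\Sph^{n-1}$.

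Next I would fix $p \in X$ and choose points $p_i \in X_i$ with $p_i \to p$. Applying Proposition~\ref{prop: hm} to each homology $n$-manifold $X_i$, every $\Sigma_{p_i}X_i$ is a compact $\CAT(1)$ homology $(n-1)$-manifold and, in particular, has the same homology as $\Sph^{n-1}$. The crucial point is to pass this information to $\Sigma_pX$, and for this I would establish the Gromov--Hausdorff convergence $\Sigma_{p_i}X_i \to \Sigma_pX$ (after passing to a subsequence and adjusting the $p_i$ if necessary). Concretely, tangent cones are pointed Gromov--Hausdorff limits of blow-ups $(\tfrac1r X, p) \to (T_pX, o_p)$ as $r \to 0$, and a diagonal argument over the rescaled spaces $(\tfrac1r X_i, p_i)$ produces $r_i \downarrow 0$ with $(\tfrac{1}{r_i}X_i, p_i) \to (T_pX, o_p)$; because $\dim X = n$ the convergence is non-collapsing in the sense of the dimension-stability analysis of \cite[Subsection 11.3]{lytchak-nagano1}, so the unit links converge, $\Sigma_{p_i}X_i \to \Sigma_pX$. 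I expect this non-collapse, equivalently this continuity of spaces of directions, to be the main obstacle, since spaces of directions need not in general converge to the space of directions of the Gromov--Hausdorff limit; here it is exactly the preservation of dimension furnished by Lemma~\ref{lem: stab} that rules out the degeneration.

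Finally I would feed the convergent sequence of links into Theorem~\ref{thm: gpw}. Each $\Sigma_{p_i}X_i$ is a compact $\CAT(1)$ space, hence $\LGC(\id_{[0,\pi)})$ with a contractivity function independent of $i$, and all have dimension $\le n-1$, while the limit $\Sigma_pX$ is finite-dimensional. Thus Theorem~\ref{thm: gpw}(2) yields that $\Sigma_pX$ is homotopy equivalent to $\Sigma_{p_i}X_i$ for all sufficiently large $i$, whence $\Sigma_pX$ has the same homology as $\Sph^{n-1}$. As $p \in X$ was arbitrary, Proposition~\ref{prop: hm} applied to $X$ shows that $X$ is a homology $n$-manifold, completing the argument.
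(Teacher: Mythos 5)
Your reductions are sound: the limit $X$ is a compact, geodesically complete $\CAT(\kappa)$ space with $\dim X = n$, and by Proposition~\ref{prop: hm} it would indeed suffice to show that every $\Sigma_pX$ has the homology of $\Sph^{n-1}$. The fatal gap is exactly the step you flag as the crux: the asserted Gromov--Hausdorff convergence $\Sigma_{p_i}X_i \to \Sigma_pX$. This is false, and non-collapsing (constancy of dimension) does not rescue it: spaces of directions are genuinely discontinuous under non-collapsed convergence of geodesically complete spaces with an upper curvature bound, even inside the class of homology manifolds. Concretely, let $X$ be the Euclidean cone over a circle of length $2\pi+2a$ (a $\CAT(0)$ topological $2$-manifold with vertex $p$), and let $X_i$ be the complete flat surface with two cone points of angle $2\pi+a$ at mutual distance $1/i$, obtained by inserting two wedges of angle $a$ along two disjoint rays in the plane; since all cone angles are $\ge 2\pi$, each $X_i$ is $\CAT(0)$, geodesically complete, and homeomorphic to $\R^2$. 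Then $(X_i,p_i)\to(X,p)$ with $\dim\equiv 2$, but every space of directions in every $X_i$ is a circle of length $2\pi+a$ or $2\pi$, while $\Sigma_pX$ is a circle of length $2\pi+2a$: no subsequence and no choice of $p_i\to p$ yields link convergence. One can even take the $X_i$ to be smooth surfaces of curvature $\le 0$ (mollify the cone tip), so that all links of all approximants are round circles of length exactly $2\pi$. Since your argument funnels through pointed blow-up limits anyway, these pointed examples break it directly. What \cite[Subsection 11.3]{lytchak-nagano1} provides (and what Lemmas~\ref{lem: lstab}--\ref{lem: npure} actually use) is semicontinuity of the \emph{dimension} of links, not their continuity as metric spaces; in the examples above the link jumps up in the limit. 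Your diagonal argument does prove $(\tfrac{1}{r_i}X_i,p_i)\to(T_pX,o_p)$, but links are scale-invariant, and convergence of pointed spaces simply does not imply convergence of links at the basepoints --- that failure is the whole difficulty of topological stability in this subject.

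For comparison: the paper does not prove this lemma at all; it quotes it as \cite[Lemma 3.3]{lytchak-nagano2}. The proof that works runs through controlled topology rather than through infinitesimal structure, i.e., through the same mechanism as Theorem~\ref{thm: gpw}: the spaces $X_i$ and $X$ are uniformly $\LGC(\id_{[0,D_{\kappa})})$, so Gromov--Hausdorff approximations can be upgraded to $\varepsilon_i$-homotopy equivalences $X_i \to X$ with $\varepsilon_i \to 0$ (\cite{grove-petersen-wu}, \cite{petersen}); the local homology $H_{\ast}(X,X-\{p\})$ is not a homotopy invariant, but it is computed from metric annuli at definite scales and is stable under homotopy equivalences whose control tends to zero. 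This is exactly how Theorem~\ref{thm: gpw}(3) (limits of $\LGC(\rho)$ topological $n$-manifolds are generalized $n$-manifolds) is proved, and the same argument accepts homology $n$-manifolds as input. If instead you try to salvage the link-based outline, note that the natural repair --- ``$T_pX$ is a pointed limit of rescaled $X_i$, which are homology manifolds, hence $T_pX$ is a homology manifold, hence $\Sigma_pX$ has the homology of $\Sph^{n-1}$ by Proposition~\ref{prop: hm}'' --- is circular: the middle implication is precisely the (pointed version of the) lemma you are trying to prove.
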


\begin{rem}\label{rem: agpw}
Lytchak and the author \cite[Theorems 1.3 and 7.5]{lytchak-nagano2}
have proved that
if a sequence of compact $\CAT(\kappa)$ Riemannian $n$-manifolds 
$X_i$, $i = 1, 2, \dots$, converges to
some compact metric space $X$ in the Gromov-Hausdorff topology,
then $X$ is a topological $n$-manifold in which
all iterated spaces of directions are homeomorphic to spheres;
moreover,
$X$ is homeomorphic to $X_i$ for all sufficiently large $i$.
\end{rem}

\subsection{The Lusternik-Schnirelmann category}

Let $X$ be a topological space $X$.
The \emph{Lusternik-Schnirelmann category of $X$},
denoted by $\cat X$, is defined as the least non-negative integer $k$
such that there exists an open covering of $X$
consisting of $k+1$ contractible subsets in $X$
(possibly $\infty$ if such a finite covering does not exist).
By definition, it follows that
$\cat X = 0$ if and only if $X$ is contractible.
Notice that
$\cat X$ depends only on the homotopy type of $X$.

The following seems to be well-known for experts.

\begin{prop}\label{prop: hs}
If a connected,
compact topological $n$-manifold $M$ satisfies $\cat M = 1$,
then $M$ is homotopy equivalent to $\Sph^n$;
in particular, $M$ is homeomorphic to $\Sph^n$.
\end{prop}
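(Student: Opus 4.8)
The plan is to prove the statement in two stages, matching its two clauses: first show that $\cat M = 1$ forces $M$ to be homotopy equivalent to $\Sph^n$, and then invoke topological manifold theory to upgrade homotopy equivalence to homeomorphism. For the first stage, the hypothesis $\cat M = 1$ means there is an open cover $M = A \cup B$ with both $A$ and $B$ contractible in $M$. Since $M$ is a connected compact topological $n$-manifold, it is in particular a finite-dimensional ANR, so I may work with the covering $\{A, B\}$ categorically. The natural tool is the Mayer--Vietoris sequence (or equivalently a nerve/Lusternik--Schnirelmann argument). Because $A$ and $B$ are each contractible in $M$, the inclusions $A \hookrightarrow M$ and $B \hookrightarrow M$ are null-homotopic, which lets me compute the homology of $M$ from that of the intersection $A \cap B$.

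First I would set up Mayer--Vietoris for the (open, contractible-in-$M$) pair $A, B$. The key point is that while $A$ and $B$ need not themselves be contractible, they are contractible \emph{inside} $M$, so the maps $H_k(A) \to H_k(M)$ and $H_k(B) \to H_k(M)$ induced by inclusion are zero for $k \ge 1$. Feeding this into Mayer--Vietoris, the connecting maps become isomorphisms in the appropriate range and one finds $\tilde H_k(M) \cong \tilde H_{k-1}(A \cap B)$ in positive degrees (after accounting for the low-degree bookkeeping that $M$ is connected). Thus $M$ has the homology of a suspension-type space built from $A \cap B$. Since $M$ is a closed orientable (or $\Z/2$-orientable) $n$-manifold, Poincaré duality constrains its homology sharply: the total Betti number is at least $2$, with equality exactly for a homology sphere. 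The category-one condition is precisely what pins the cup-length down, so the cohomology ring of $M$ must agree with that of $\Sph^n$. Combining this with simple connectivity (which I obtain from $\cat M = 1$ via the fact that a space covered by two sets each null-homotopic in it has trivial fundamental group, by van Kampen) shows $M$ is a simply connected homology $n$-sphere, hence by the Hurewicz theorem and Whitehead's theorem homotopy equivalent to $\Sph^n$.

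For the second stage I would invoke the resolution of the generalized (topological) Poincaré conjecture: a closed topological $n$-manifold homotopy equivalent to $\Sph^n$ is homeomorphic to $\Sph^n$. This is classical for $n \le 3$, follows from work of Smale and Newman for $n \ge 5$, and from Freedman for $n = 4$; I would simply cite this as the manifold-theoretic input. The main obstacle is the first stage, specifically the careful homological bookkeeping: the subsets $A, B$ in the definition of $\cat M$ are only contractible \emph{in} $M$, not contractible as spaces, so I must be disciplined about using the vanishing of the inclusion-induced maps rather than the homology of $A, B$ themselves, and I must correctly handle the low-degree terms of Mayer--Vietoris to extract simple connectivity alongside the homology-sphere conclusion. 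Once the homotopy type is identified, the passage to homeomorphism is a black-box citation and presents no difficulty.
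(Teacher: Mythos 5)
Your overall route has the same architecture as the paper's: the paper disposes of the first half by citing \cite[Lemma 8.2]{lytchak-nagano2} as a well-known fact of algebraic topology, and of the second half by citing the resolutions of the Poincar\'e conjecture due to Perelman, Freedman, and Smale --- exactly the black box you invoke at the end. The cup-length-plus-Poincar\'e-duality scheme you sketch is indeed the standard proof hiding behind that citation. However, your treatment of simple connectivity contains a genuine error, and simple connectivity is indispensable for the Hurewicz--Whitehead step.

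The lemma you invoke --- ``a space covered by two sets each null-homotopic in it has trivial fundamental group, by van Kampen'' --- is false: $\Sph^1$ is covered by two contractible open arcs, yet $\pi_1(\Sph^1) \cong \Z$. Since $A \cap B$ need not be connected, the (groupoid) van Kampen theorem only yields that $\pi_1(M)$ is \emph{free}: the images of $\pi_1(A)$ and $\pi_1(B)$ die, but each extra component of $A \cap B$ contributes a free generator, which is precisely what happens for the circle. Freeness alone does not suffice ($\Sph^1 \times \Sph^{n-1}$ is a closed manifold with free nontrivial $\pi_1$ and category $2$), and your homological conclusion alone does not suffice either (the Poincar\'e homology sphere is an integral homology $\Sph^3$ with nontrivial $\pi_1$); the argument genuinely needs both ingredients combined. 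Concretely, for $n \ge 2$: run your cup-length/duality step with $\Z/2$-coefficients (valid without orientability) to get $H^1(M;\Z/2) = 0$; if $\pi_1(M)$ were free of rank $r \ge 1$, universal coefficients would give $H^1(M;\Z/2) \cong (\Z/2)^r \neq 0$; hence $\pi_1(M)$ is trivial. (For $n = 1$ the proposition holds trivially since $M \cong \Sph^1$, and your argument had better fail there, as written it ``proves'' $\Sph^1$ is simply connected.) Two smaller points: killing the inclusion-induced maps in Mayer--Vietoris makes the connecting homomorphisms $H_k(M) \to H_{k-1}(A \cap B)$ injective, not isomorphisms, though you never actually use that claim; and to rule out torsion and get a genuine integral homology sphere, run the duality/cup-length argument over $\Q$ and over every $\Z/p$ before applying Hurewicz and Whitehead.
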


The second half of Proposition \ref{prop: hs}
is derived from the first half and
the resolutions of the Poincar\'{e} conjecture due to Perelman,
and the generalized Poincar\'{e} conjecture due to Freedman and Smale.
The first half is well-known in algebraic topology
(see \cite[Lemma 8.2]{lytchak-nagano2}).

\subsection{A sphere theorem for topological manifolds}

Before showing Theorem \ref{thm: volsphhm},
we prove a weaker one:

\begin{prop}\label{prop: volsphmfd}
For every positive integer $n$,
there exists a positive number $\delta \in (0,\infty)$ 
such that
if a compact $\CAT(1)$ topological $n$-manifold $X$ satisfies
\eqref{eqn: volsphhma} for $\delta$,
then $X$ is homeomorphic to $\Sph^n$.
\end{prop}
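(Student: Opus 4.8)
The plan is to argue by contradiction, constructing a sequence of counterexamples and extracting a Gromov-Hausdorff limit whose geometry is pinned down by Theorem \ref{thm: critical}. Suppose the proposition fails for some fixed $n$. Then for every $\delta = 1/i$ there exists a compact $\CAT(1)$ topological $n$-manifold $X_i$ satisfying $\Haus^n(X_i) < (3/2)\Haus^n(\Sph^n) + 1/i$ that is \emph{not} homeomorphic to $\Sph^n$. Every topological $n$-manifold is purely $n$-dimensional, and every $\CAT(1)$ manifold is geodesically complete; moreover $\dim X_i = n$. The volumes $\Haus^n(X_i)$ are uniformly bounded above (say by $2\Haus^n(\Sph^n)$ for large $i$), so by Proposition \ref{prop: precpt} the sequence is precompact in the Gromov-Hausdorff topology. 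Passing to a subsequence, I would assume $X_i \to X$ for some compact metric space $X$.

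\emph{Identifying the limit.} By Lemmas \ref{lem: stab} and \ref{lem: npure}, the limit $X$ is a purely $n$-dimensional, compact, geodesically complete $\CAT(1)$ space with $\dim X = n$. By the volume convergence theorem (Theorem \ref{thm: volconv}),
\[
\Haus^n(X) = \lim_{i \to \infty} \Haus^n(X_i) \le \frac{3}{2}\Haus^n(\Sph^n).
\]
On the other hand, since $X$ is purely $n$-dimensional, compact, and geodesically complete, the lower volume bound $\Haus^n(X) \ge \Haus^n(\Sph^n)$ holds, but more to the point the hypotheses of Theorem \ref{thm: critical} require equality. So I would split into two cases according to whether the limiting volume is strictly below $(3/2)\Haus^n(\Sph^n)$ or equal to it. If $\Haus^n(X) < (3/2)\Haus^n(\Sph^n)$, then by the volume sphere theorem \cite[Theorem 8.3]{lytchak-nagano2} the limit $X$ is homeomorphic to $\Sph^n$. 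If $\Haus^n(X) = (3/2)\Haus^n(\Sph^n)$, then Theorem \ref{thm: critical} applies and $X$ is either homeomorphic to $\Sph^n$ or isometric to $\Sph^{n-1} \ast T$.

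\emph{Ruling out the join and concluding.} The case $X \cong \Sph^{n-1} \ast T$ must be excluded: since every $X_i$ is a topological $n$-manifold, the $\LGC(\id)$ property is uniform (every $\CAT(1)$ space is $\LGC(\id_{[0,D_1)})$), so Theorem \ref{thm: gpw}(3) forces the limit $X$ to be a generalized $n$-manifold. But $\Sph^{n-1} \ast T$ fails to be a homology $n$-manifold at the points of its spherical factor $\Sph^{n-1}$ (three hemispheres meet there, so the local homology is wrong), hence $X$ cannot be isometric to $\Sph^{n-1} \ast T$. Therefore in both surviving cases $X$ is homeomorphic to $\Sph^n$, and in particular $\cat X = 1$. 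By Theorem \ref{thm: gpw}(2), $X$ is homotopy equivalent to $X_i$ for all large $i$; since $\cat$ depends only on homotopy type, $\cat X_i = \cat X = 1$ for all large $i$. Then Proposition \ref{prop: hs} shows each such $X_i$ is homeomorphic to $\Sph^n$, contradicting the choice of the $X_i$.

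\emph{The main obstacle} is cleanly excluding the degenerate limit $\Sph^{n-1} \ast T$: I must be sure the uniform local contractibility survives in the limit well enough to invoke Theorem \ref{thm: gpw}(3), and that the resulting generalized-manifold constraint genuinely rules out the join. The rest is bookkeeping with the convergence lemmas, but this incompatibility between the manifold hypothesis on the $X_i$ and the singular structure of the join is the crux of why the threshold can be pushed past $(3/2)\Haus^n(\Sph^n)$ for manifolds.
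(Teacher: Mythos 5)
Your proof is correct, and it shares the paper's overall skeleton --- argue by contradiction, extract a Gromov--Hausdorff limit via Proposition \ref{prop: precpt}, identify the limit as a purely $n$-dimensional, compact, geodesically complete $\CAT(1)$ space with $\Haus^n \le \frac{3}{2}\Haus^n(\Sph^n)$ by Lemmas \ref{lem: stab}, \ref{lem: npure} and Theorem \ref{thm: volconv}, classify it via Theorem \ref{thm: critical}, exclude $\Sph^{n-1} \ast T$ by Theorem \ref{thm: gpw}(3), and finish with $\cat = 1$ and Proposition \ref{prop: hs} --- but your endgame takes a genuinely different route. The paper first invokes the capacity sphere theorem so that each counterexample $X_i$, and hence the limit $X$, has a tripod; the tripod case of Theorem \ref{thm: critical} then pins $X$ down \emph{isometrically} as the $n$-triplex, and the paper lifts the explicit metric covering $X = B_{\pi/2}(p_1) \cup B_{\pi/2}(p_2)$ (with $p_1, p_2$ antipodal in the core $\Sph^{n-2}$) to a covering of $X_j$ by two contractible balls, giving $\cat X_j = 1$ by hand. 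You bypass tripods and the triplex entirely: you only need the limit to be \emph{homeomorphic} to $\Sph^n$, after which Theorem \ref{thm: gpw}(2) (Petersen's homotopy stability) transfers the homotopy type to $X_i$, and homotopy invariance of $\cat$ gives $\cat X_i = 1$; this also quietly supplies the connectedness of $X_i$ required by Proposition \ref{prop: hs}, and your dismissal of $\Sph^{n-1} \ast T$ as a non-homology-manifold is at the same level of detail as the paper's own. In effect your argument is the alternative the paper itself records in Remark \ref{rem: aftervolhomsphhm}: it is the proof of Proposition \ref{prop: volhomsphhm} combined with Proposition \ref{prop: hs}, and it is shorter, avoiding the capacity sphere theorem, the tripod half of Theorem \ref{thm: critical}, and the triplex geometry. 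What the paper's longer route buys is precisely that explicit two-ball covering: in the appendix, for $n = 3$, it lets Proposition \ref{prop: hs} be replaced by Theorem \ref{thm: thurston} and the generalized Schoenflies theorem, yielding a proof independent of the Poincar\'{e} conjecture, whereas your route is inseparably tied to the Poincar\'{e} conjecture through Proposition \ref{prop: hs}.
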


\begin{proof}
Suppose the contrary.
By virtue of the volume sphere theorem \cite[Theorem 8.3]{lytchak-nagano2},
we may suppose that
there exists a sequence of compact $\CAT(1)$ topological $n$-manifolds
$X_i$, $i = 1, 2, \dots$,
with $\lim_{i \to \infty} \Haus^n(X_i) = (3/2) \Haus^n(\Sph^n)$
such that each $X_i$ is not homeomorphic to $\Sph^n$.
Due to the capacity sphere theorem 
\cite[Theorem 1.5]{lytchak-nagano2},
we may assume that
each $X_i$ has a tripod.
By Proposition \ref{prop: precpt},
the sequence $X_i$, $i = 1, 2, \dots$,
has a convergent subsequence $X_j$,
$j = 1, 2, \dots$,
tending to some 
compact metric space $X$
in the Gromov-Hausdorff topology.
By Lemma \ref{lem: npure},
the limit
$X$ is a purely $n$-dimensional, compact, 
geodesically complete $\CAT(1)$ space.
Since each $X_j$ has a tripod,
so does $X$.
From Theorem \ref{thm: volconv} we derive
$\Haus^n(X) = (3/2) \Haus^n(\Sph^n)$.

Theorem \ref{thm: critical} implies that
$X$ is isometric to either the $n$-triplex
or $\Sph^{n-1} \ast T$.
Theorem \ref{thm: gpw} tells us that
$X$ is a homology $n$-manifold,
and hence it must be isometric to the $n$-triplex. 
Recall that the $n$-triplex $X$ consists of $\Sph^{n-2}\ast T$
and the three copies of unit $n$-hemispheres
(see Example \ref{exmp: triplex}).
Hence we find a pair of two points $p_1, p_2$ 
in the $\pi$-convex subset $\Sph^{n-2}$ of $X$
with $d(p_1,p_2) = \pi$
such that
$X = B_{\pi/2}(p_1) \cup B_{\pi/2}(p_2)$.
Take $p_{j,k} \in X_j$, $k = 1, 2$,
converging to $p_k \in X$ as $j \to \infty$.
Then $X_j = U_{3\pi/4}(p_{j,1}) \cup U_{3\pi/4}(p_{j,2})$,
provided $j$ is large enough;
in particular,
$\cat X_j = 1$.
From Proposition \ref{prop: hs} it follows that
$X_j$ is homeomorphic to $\Sph^n$.
Thus we obtain a contradiction.
This completes the proof.
\end{proof}

\subsection{A homotopy sphere theorem for homology manifolds}

Similarly to Proposition \ref{prop: volsphmfd},
we obtain:

\begin{prop}\label{prop: volhomsphhm}
For every positive integer $n$,
there exists a positive number $\delta \in (0,\infty)$ 
such that
if a compact $\CAT(1)$ homology $n$-manifold $X$ satisfies
\eqref{eqn: volsphhma} for $\delta$,
then $X$ is homotopy equivalent to $\Sph^n$.
\end{prop}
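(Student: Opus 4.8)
The plan is to mirror the proof of Proposition \ref{prop: volsphmfd} almost verbatim, the only substantial change being that I must now produce a homotopy equivalence rather than a homeomorphism, and that the sequence I extract now consists of homology $n$-manifolds rather than topological $n$-manifolds. First I would argue by contradiction: assuming the statement fails, and invoking the volume sphere theorem \cite[Theorem 8.3]{lytchak-nagano2} to discard the case where $\Haus^n(X)$ is bounded strictly below $(3/2)\Haus^n(\Sph^n)$, I obtain a sequence $X_i$ of compact $\CAT(1)$ homology $n$-manifolds with $\lim_{i\to\infty}\Haus^n(X_i) = (3/2)\Haus^n(\Sph^n)$, none of which is homotopy equivalent to $\Sph^n$. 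By the capacity sphere theorem \cite[Theorem 1.5]{lytchak-nagano2} I may assume each $X_i$ has a tripod.

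Next I would run the convergence machinery exactly as before. Since every $\CAT(1)$ homology $n$-manifold is purely $n$-dimensional (being a geodesically complete generalized $n$-manifold), Proposition \ref{prop: precpt} provides a subsequence $X_j$ converging in the Gromov-Hausdorff topology to a compact metric space $X$. By Lemmas \ref{lem: stab} and \ref{lem: npure} the limit $X$ is a purely $n$-dimensional, compact, geodesically complete $\CAT(1)$ space, having a tripod since each $X_j$ does, and by Theorem \ref{thm: volconv} it satisfies $\Haus^n(X) = (3/2)\Haus^n(\Sph^n)$. Now Theorem \ref{thm: critical} forces $X$ to be isometric either to the $n$-triplex or to $\Sph^{n-1}\ast T$. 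Here I invoke Lemma \ref{lem: stabcathm}: the Gromov-Hausdorff limit of compact $\CAT(\kappa)$ homology $n$-manifolds is again a homology $n$-manifold, which rules out $\Sph^{n-1}\ast T$ (it has non-manifold points in the spherical factor) and pins down $X$ as the $n$-triplex.

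Finally I would convert the geometric structure of the $n$-triplex into a category bound on $X_j$. As in Example \ref{exmp: triplex}, the $n$-triplex is covered by two balls $B_{\pi/2}(p_1)\cup B_{\pi/2}(p_2)$ centered at antipodal points $p_1,p_2$ with $d(p_1,p_2)=\pi$ lying in the $\Sph^{n-2}$ factor. Choosing $p_{j,k}\in X_j$ converging to $p_k$, the open balls $U_{3\pi/2}(p_{j,1})\cup U_{3\pi/2}(p_{j,2})$ cover $X_j$ for all large $j$; since each such ball is contractible (a $\CAT(1)$ ball of radius $<\pi=D_1$ contracts inside itself along geodesics from its center), this shows $\cat X_j = 1$. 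The only genuine divergence from Proposition \ref{prop: volsphmfd} is the last step: there one applied Proposition \ref{prop: hs} to a topological manifold to get a homeomorphism, but here $X_j$ is only a generalized (homology) $n$-manifold, so I would instead invoke only the first, purely homotopy-theoretic half of Proposition \ref{prop: hs} (equivalently the cited algebraic fact \cite[Lemma 8.2]{lytchak-nagano2}), which concludes that a connected compact space of category $1$ with the homotopy type of a manifold is homotopy equivalent to $\Sph^n$. The main obstacle, and the one point requiring care, is precisely this: since $X_j$ need not be a topological manifold, I cannot quote the Poincar\'e-conjecture input, and must verify that the category argument yielding homotopy equivalence to $\Sph^n$ applies to generalized manifolds; the covering-dimension and $\ANR$ hypotheses built into being a generalized $n$-manifold are what make the algebraic-topological argument go through. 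This contradicts the assumption that no $X_j$ is homotopy equivalent to $\Sph^n$, completing the proof.
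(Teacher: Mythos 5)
Your proof is workable in outline, but it is not the paper's argument, and it is heavier than necessary. The paper's proof of Proposition \ref{prop: volhomsphhm} runs the same compactness setup (in fact without needing the capacity sphere theorem or tripods at all): after extracting the limit $X$, which is purely $n$-dimensional, compact, geodesically complete, $\CAT(1)$ and satisfies $\Haus^n(X) = \frac{3}{2}\Haus^n(\Sph^n)$, it applies the \emph{first} alternative of Theorem \ref{thm: critical} — $X$ is either homeomorphic to $\Sph^n$ or isometric to $\Sph^{n-1} \ast T$ — and then Lemma \ref{lem: stabcathm} rules out $\Sph^{n-1} \ast T$, so $X$ is homeomorphic to $\Sph^n$ (the $n$-triplex needs no separate treatment, being itself homeomorphic to $\Sph^n$). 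The conclusion then comes in one stroke from Theorem \ref{thm: gpw}(2): the spaces $X_j$ are uniformly $\LGC(\id_{[0,\pi)})$ and converge to the finite-dimensional $X$, hence are homotopy equivalent to $X \cong \Sph^n$ for large $j$. This single application of the Grove--Petersen--Wu/Petersen stability theorem replaces your entire triplex-identification, two-ball covering, and category endgame.

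The step you yourself flag is indeed the weak point of your route: Proposition \ref{prop: hs} is stated for topological $n$-manifolds, whereas your $X_j$ are only homology $n$-manifolds, so you need the assertion that a connected, compact \emph{generalized} $n$-manifold of category $1$ is homotopy equivalent to $\Sph^n$. This is true ($\cat \le 1$ makes $X_j$ a co-H-space, so $\pi_1$ is free and the cup-length is at most $1$; Poincar\'e duality, valid for compact $\ANR$ homology manifolds, then forces trivial $\pi_1$ and the cohomology of a sphere over every field; Hurewicz and Whitehead, applicable since compact $\ANR$s have CW homotopy type, finish), but you assert it rather than prove it, and the paper provides no statement in this generality — this is exactly the issue the paper's use of Theorem \ref{thm: gpw}(2) is designed to avoid. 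Separately, your radius is internally inconsistent: you cover $X_j$ by $U_{3\pi/2}(p_{j,k})$ but justify contractibility by "radius $< \pi = D_1$"; you need a radius strictly between $\pi/2$ and $\pi$, say $3\pi/4$, which suffices for the covering once $j$ is large. (The same $3\pi/2$ appears in the paper's proof of Proposition \ref{prop: volsphmfd}; it is evidently a typo for $3\pi/4$, so you have inherited it, but your added justification makes the inconsistency visible.)
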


\begin{proof}
Suppose the contrary.
Similarly to the proof of Proposition \ref{prop: volsphmfd},
we may suppose that there exists a sequence of compact $\CAT(1)$
homology $n$-manifolds
$X_i$, $i = 1, 2, \dots$,
with $\lim_{i \to \infty} \Haus^n(X_i) = (3/2) \Haus^n(\Sph^n)$
such that each $X_i$ 
is not homotopy equivalent to $\Sph^n$.
By Proposition \ref{prop: precpt},
the sequence $X_i$, $i = 1, 2, \dots$,
has a convergent subsequence $X_j$, $j = 1, 2, \dots$,
tending to some $X$ in the Gromov-Hausdorff topology.
By Lemma \ref{lem: npure},
the limit $X$ is a purely $n$-dimensional, compact, 
geodesically complete $\CAT(1)$ space.
From Theorem \ref{thm: volconv} we derive
$\Haus^n(X) = (3/2) \Haus^n(\Sph^n)$.
Due to Theorem \ref{thm: critical},
we see that $X$ is either homeomorphic to $\Sph^n$
or isometric to $\Sph^{n-1} \ast T$.
By Lemma \ref{lem: stabcathm},
the limit $X$ must be a homology $n$-manifold,
and hence homeomorphic to $\Sph^n$.
It follows from Theorem \ref{thm: gpw} that
$X_j$ is homotopy equivalent to an $n$-sphere $X$
for all sufficiently large $j$.
This is a contradiction.
\end{proof}

\begin{rem}\label{rem: aftervolhomsphhm}
Combining Proposition \ref{prop: volhomsphhm} and 
the resolutions of the Poincar\'{e} conjecture
also lead to Proposition \ref{prop: volsphmfd}.
\end{rem}

\subsection{Proof of Theorem \ref{thm: vgtopreghm}}

Let $\delta \in (0,\infty)$ be sufficiently small.
Let $X$ be a $\CAT(\kappa)$ homology $n$-manifold,
and let $W$ be an open subset of $X$.
Assume that for every $x \in W$ there exists $r \in (0,D_{\kappa})$
satisfying \eqref{eqn: vgtopreghma}.
By Lemma \ref{lem: volgrowth},
for every $x \in W$ the condition \eqref{eqn: vgtopreghma} leads to
\[
\frac{\Haus^{n-1} (\Sigma_xX)}{\Haus^{n-1} (\Sph^{n-1})} < \frac{3}{2} + \delta.
\]
From Proposition \ref{prop: hm} it follows that
$\Sigma_xX$ is a compact $\CAT(1)$ homology $(n-1)$-manifold.
By Proposition \ref{prop: volhomsphhm}, the space
$\Sigma_xX$ is homotopy equivalent to $\Sph^{n-1}$,
provided $\delta$ is small enough.
Due to the local topological regularity theorem 
\cite[Theorem 1.1]{lytchak-nagano2},
we conclude that $W$ is a topological $n$-manifold.
Thus we obtain Theorem \ref{thm: vgtopreghm}.
\qed

\subsection{Proof of Theorem \ref{thm: volsphhm}}

Let $\delta \in (0,\infty)$ be sufficiently small.
Let $X$ be a compact $\CAT(1)$ homology $n$-manifold
satisfying \eqref{eqn: volsphhma}.
From Propositon \ref{prop: relvolcomp} and \eqref{eqn: volsphhma},
for every $x \in X$ we derive
\[
\frac{\Haus^n \bigl( B_r(x) \bigr)}{\omega_1^n(r)} 
\le \frac{\Haus^n (X)}{\Haus^n (\Sph^n)} 
< \frac{3}{2} + \frac{\delta}{\Haus^n (\Sph^n)} 
\]
for any $r \in (0,\pi)$.
From Theorem \ref{thm: vgtopreghm} we deduce that
$X$ is a topological $n$-manifold, 
provided $\delta$ is small enough.
This together with Proposition \ref{prop: volsphmfd}
completes the proof of Theorem \ref{thm: volsphhm}.
\qed


\appendix
\section{Three-manifold recognition revisited}

One of the key points in the proof of Theorem \ref{thm: volsphhm}
is to prove Proposition \ref{prop: volsphmfd}.
In the proof of Proposition \ref{prop: volsphmfd} discussed above,
we rely on the resolutions of the (generalized) Poincar\'{e} conjecture
when we use Proposition \ref{prop: hs}.
As explained below,
we can prove Proposition \ref{prop: volsphmfd} in the $3$-dimensional case
without relying on the Poincar\'{e} conjecture.

A locally compact, separable metric space $M$ is said to be a 
\emph{homology $n$-manifold with boundary}
if for every $p \in M$ there exists $x \in D^n$ 
such that $H_{\ast}(M,M-\{p\})$ coincides with $H_{\ast}(D^n,D^n-\{x\})$,
where $D^n$ is the Euclidean closed unit $n$-disk
centered at the origin in $\R^n$;
the 
\emph{boundary $\partial M$ of $M$} 
is defined as the set of all points $p \in M$
at which the local homologies $H_{\ast}(M,M-\{p\})$ are trivial.
A homology $n$-manifold $M$ with boundary is a 
\emph{generalized $n$-manifold with boundary}
if $M$ is an $\ANR$ of $\dim M < \infty$.
If $M$ is a generalized $n$-manifold with boundary,
then $\dim M = n$, 
and $\partial M$ is closed and nowhere dense in $M$
(see e.g., \cite[Lemma 2]{mitchell}).
From the theorem of Mitchell \cite[Theorem]{mitchell},
it follows that if $M$ is a generalized $n$-manifold with boundary,
then $\partial M$ is either empty or
a generalized $(n-1)$-manifold without boundary.

Thurston showed in \cite[Proposition 2.7]{thurston} that
if $X$ is a $\CAT(\kappa)$ homology $n$-manifold,
then for every $r \in (0,D_{\kappa}/2)$, and for every $p \in X$,
the compact contractible metric ball $B_r(x)$ is a generalized $n$-manifold
with boundary $\partial B_r(p)$;
in particular,
by the theorem of Mitchel \cite[Theorem]{mitchell},
and the Poincar\'{e} duality for homology manifolds (see e.g., \cite{bredon}), 
the metric sphere $\partial B_r(p)$ is a generalized $(n-1)$-manifold
with the same homology as $\Sph^{n-1}$
(see also \cite[Lemma 3.2]{lytchak-nagano2}).
This property holds true for any $r \in (0,D_{\kappa})$ beyond $D_{\kappa}/2$.

\begin{lem}\label{lem: bms}
{\em (cf.~\cite{thurston})}
If $X$ is a $\CAT(\kappa)$ homology $n$-manifold,
then for every $r \in (0,D_{\kappa})$, 
and for every $p \in X$,
the ball $B_r(p)$ is a generalized $n$-manifold with boundary $\partial B_r(p)$;
in particular,
$\partial B_r(p)$ is a generalized $(n-1)$-manifold
with the same homology as $\Sph^{n-1}$.
\end{lem}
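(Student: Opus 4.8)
The plan is to reduce the general case $r \in (0,D_{\kappa})$ to the range $(0,D_{\kappa}/2)$ already settled by Thurston, transporting his conclusion along a radial homeomorphism built from the geodesic structure of $X$. Fix $p \in X$ and $r \in (0,D_{\kappa})$, pick an auxiliary radius $r_0 \in (0,D_{\kappa}/2)$, and set $\lambda := r_0/r \in (0,1)$. First I would define the radial scaling map $\Phi_{\lambda} \colon B_r(p) \to B_{r_0}(p)$ by $\Phi_{\lambda}(p) := p$ and, for $x \ne p$, by letting $\Phi_{\lambda}(x)$ be the point on the geodesic $px$ at distance $\lambda\, d(p,x)$ from $p$. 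Since $d(p,x) < D_{\kappa}$ the geodesic $px$ is unique, and along it the distance to $p$ increases monotonically, so $px \subset B_r(p)$ and the scaled point lies at distance $\lambda\, d(p,x) \le r_0 < D_{\kappa}/2$ from $p$; thus $\Phi_{\lambda}$ is well defined with values in $B_{r_0}(p)$.

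The central step is to check that $\Phi_{\lambda}$ is a homeomorphism respecting the radial structure. Because every pair of points of $X$ at distance $< D_{\kappa}$ is joined by a unique geodesic depending continuously on its endpoints, the map $x \mapsto \Phi_{\lambda}(x)$ is continuous (continuity at $p$ being immediate, as $d(p,\Phi_{\lambda}(x)) = \lambda\, d(p,x) \to 0$ when $x \to p$); its inverse is the scaling $\Phi_{1/\lambda}$, continuous for the same reason, and uniqueness of the geodesics emanating from $p$ yields injectivity, while surjectivity onto $B_{r_0}(p)$ is clear. Since $d(p,\Phi_{\lambda}(x)) = \lambda\, d(p,x)$ by construction, $\Phi_{\lambda}$ restricts to homeomorphisms $U_r(p) \to U_{r_0}(p)$ and $\partial B_r(p) \to \partial B_{r_0}(p)$.

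Finally I would invoke the topological invariance of the relevant notions. Being a generalized $n$-manifold with boundary is defined through local homology together with the $\ANR$ and finite-dimensionality conditions, all preserved under homeomorphism, and the boundary is intrinsically the set of points with trivial local homology. Hence, as $B_{r_0}(p)$ is a generalized $n$-manifold with boundary $\partial B_{r_0}(p)$ by the result of Thurston recalled above, the homeomorphism $\Phi_{\lambda}$ forces $B_r(p)$ to be a generalized $n$-manifold whose boundary is $\Phi_{\lambda}^{-1}(\partial B_{r_0}(p)) = \partial B_r(p)$. The remaining assertion transfers the same way: $\partial B_r(p)$ is homeomorphic to $\partial B_{r_0}(p)$, which is a generalized $(n-1)$-manifold with the homology of $\Sph^{n-1}$ by the theorem of Mitchell and Poincar\'e duality, exactly as in the small-radius case.

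I expect the only delicate point to be the continuity of $\Phi_{\lambda}$ and of $\Phi_{1/\lambda}$, that is, the continuous dependence of the geodesic $px$ on the endpoint $x$ as $x$ ranges over $B_r(p)$. This is precisely where it is used that $r < D_{\kappa}$: all geodesics issuing from $p$ remain unique and vary continuously, so that the scaling is a genuine homeomorphism rather than merely a continuous bijection, and the generalized-manifold-with-boundary structure can be pulled back from the Thurston range without any further local analysis at the metric sphere.
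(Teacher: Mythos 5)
There is a genuine gap, and it is fatal to the whole strategy: the radial scaling map $\Phi_{\lambda}$ is in general \emph{not} injective, and its proposed inverse $\Phi_{1/\lambda}$ is not even well defined. Uniqueness of the geodesic $px$ for $d(p,x) < D_{\kappa}$ only says that two points at distance $< D_{\kappa}$ are joined by a unique geodesic; it does not prevent two geodesics $px$ and $py$ with $x \ne y$ from sharing an initial segment $pz$ and then diverging. In a $\CAT(\kappa)$ space with singularities such branching does occur, and it occurs even in $\CAT(1)$ homology manifolds: in the $n$-triplex of Example \ref{exmp: triplex} (a compact $\CAT(1)$ space homeomorphic to $\Sph^n$), a geodesic reaching the singular $(n-2)$-sphere of the spine $\Sph^{n-2} \ast T$ admits a whole family of geodesic extensions, since the space of directions there contains a circle of length $3\pi$ and every direction at angular distance $\ge \pi$ from the incoming one yields an extension. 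Taking $x \ne y$ on two such branches with $d(p,x)=d(p,y)$ and $\lambda$ small enough that $\lambda\, d(p,x)$ is below the branch point, one gets $\Phi_{\lambda}(x) = \Phi_{\lambda}(y)$. For the same reason $\Phi_{1/\lambda}(z)$ would require choosing an extension of $pz$ beyond $z$, and such extensions exist (by geodesic completeness) but are not unique. So $\Phi_{\lambda}$ is a continuous surjection but not a homeomorphism, and the generalized-manifold structure cannot be transported this way; the statement genuinely requires an argument at the large radius $r$, not a reduction to Thurston's range.

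The correct route, which is what the paper does, uses the radial geodesics only as a \emph{contraction}, never as a homeomorphism: for every $x \in \partial B_r(p)$ the set $B_r(p) - \{x\}$ is contractible to $p$ inside itself along the geodesics from $p$ (this is fine for all $r < D_{\kappa}$, and non-injectivity of the contraction is harmless for a homotopy), so the exact sequence of the pair gives that $\tilde{H}_{\ast}\bigl(B_r(p), B_r(p)-\{x\}\bigr)$ is trivial; for interior points excision identifies the local homology with that of $X$, which is that of $\R^n$. Hence $B_r(p)$ is a generalized $n$-manifold with boundary exactly $\partial B_r(p)$, and the second half follows from Mitchell's theorem together with Poincar\'e duality, as in the small-radius case. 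You could salvage your write-up by replacing the homeomorphism claim with this contraction argument, but as written the key step fails.
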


\begin{proof}
For every $x \in \partial B_r(p)$,
the set $B_r(p)-\{x\}$ is contractible to $p$ inside itself
along the geodesics from $p$,
and hence the reduced homology
$\tilde{H}_{\ast}(B_r(p),B_r(p)-\{x\})$
is trivial
since we have the exact sequence
\begin{multline*}
\tilde{H}_k(B_r(p))
\to
\tilde{H}_k(B_r(p),B_r(p)-\{x\}) \\
\to
\tilde{H}_{k-1}(B_r(p)-\{x\})
\to
\tilde{H}_{k-1}(B_r(p))
\end{multline*}
for all $k \in \N$.
Hence $\partial B_r(p)$ is the boundary of $B_r(p)$
as generalized manifolds.
The theorem of Mitchel \cite[Theorem]{mitchell} together with
the Poincar\'{e} duality
leads to the second half of the lemma.
\end{proof}

From now on,
we focus on the $3$-dimensional case.
Thurston proved in \cite[Theorem 3.3]{thurston} that
if $p$ is a point in a $\CAT(\kappa)$ homology $3$-manifold,
then $U_r(p)$ is homeomorphic to $\R^3$ for any $r \in (0,D_{\kappa}/2)$
whose upper bound $D_{\kappa}/2$ guarantees the strong convexity of $U_r(p)$.
By the same arguments as in \cite{thurston},
we can prove the following:

\begin{thm}\label{thm: thurston}
{\em (cf.~\cite{thurston})}
Let $X$ be a $\CAT(\kappa)$ homology $3$-manifold.
Then for every $p \in X$, 
and for every $r \in (0,D_{\kappa})$,
the ball $U_r(p)$ is homeomorphic to $\R^3$.
\end{thm}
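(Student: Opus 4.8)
The plan is to follow Thurston's original strategy from \cite[Theorem 3.3]{thurston} but remove the restriction $r < D_{\kappa}/2$ by exploiting geodesic completeness and the structural results already available in this section. First I would fix $p \in X$ and $r \in (0,D_{\kappa})$. By Lemma \ref{lem: bms}, the ball $B_r(p)$ is a generalized $3$-manifold with boundary $\partial B_r(p)$, and $\partial B_r(p)$ is a generalized $2$-manifold with the same homology as $\Sph^2$. Since a generalized $2$-manifold is a topological $2$-manifold by the theorem of Moore, $\partial B_r(p)$ is a closed surface with the homology of $\Sph^2$, hence homeomorphic to $\Sph^2$. The interior $U_r(p)$ is therefore an open generalized $3$-manifold; the goal is to upgrade this to a genuine topological $3$-manifold and then identify it with $\R^3$.

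The key geometric input is that $B_r(p)$ is contractible (it contracts to $p$ along geodesics emanating from $p$, which extend to all of $(0,D_{\kappa})$ by the $\CAT(\kappa)$ structure), and that for each boundary point $x$ the punctured ball $B_r(p)-\{x\}$ is still contractible to $p$, as recorded in the proof of Lemma \ref{lem: bms}. The plan is to show $U_r(p)$ is a topological $3$-manifold by verifying the local condition at each interior point. For a point $q \in U_r(p)$, I would pass to a small metric ball $U_s(q)$ with $s < D_{\kappa}/2$, where Thurston's local result \cite[Theorem 3.3]{thurston} already gives a chart homeomorphic to $\R^3$; this handles the interior regularity directly. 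Thus $U_r(p)$ is an open topological $3$-manifold which, being a contractible open generalized $3$-manifold with the homology of a point and simply connected, must be identified with $\R^3$.

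The main obstacle, and the place where I would invest the most care, is the passage from ``$U_r(p)$ is a contractible simply connected open $3$-manifold'' to ``$U_r(p)$ is homeomorphic to $\R^3$.'' In the spirit of \cite{thurston} this is where one invokes the topological characterization of $\R^3$: an open, contractible, simply connected $3$-manifold that is \emph{simply connected at infinity} is homeomorphic to $\R^3$. The simple connectivity at infinity is precisely what the boundary analysis supplies, since $\partial B_r(p) \cong \Sph^2$ furnishes a cofinal system of $2$-spheres separating $q$ from the frontier, so one can arrange nested neighborhoods of the end with simply connected complements. I would make this rigorous by using the contractibility of $B_r(p)-\{x\}$ to control the fundamental group of the complement of large compacta, exactly paralleling Thurston's argument but now with $r$ allowed up to $D_{\kappa}$ because geodesic completeness guarantees the geodesic contraction and the boundary sphere structure persist past $D_{\kappa}/2$.

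Since the statement explicitly reads ``by the same arguments as in \cite{thurston},'' the intended proof is to transcribe Thurston's proof verbatim, checking at each step that the only property used is the conclusion of Lemma \ref{lem: bms} (that metric spheres are homology $\Sph^2$'s and balls are contractible generalized $3$-manifolds with boundary), which this lemma now provides for the full range $r \in (0,D_{\kappa})$. I would therefore present the proof as a verification that Thurston's local recognition argument uses no hypothesis beyond what Lemma \ref{lem: bms} secures, and then cite \cite{thurston} for the remaining topological steps rather than reproduce them.
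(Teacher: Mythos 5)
There is a genuine gap at the decisive step. Your whole argument funnels into the claim that an open, contractible $3$-manifold that is simply connected at infinity is homeomorphic to $\R^3$, which you invoke as a classical ``topological characterization of $\R^3$.'' Without an additional irreducibility hypothesis this is not a classical theorem: if $\Sigma$ were a fake $3$-sphere, then $\Sigma$ minus a point would be an open, contractible $3$-manifold, simply connected at infinity, and homeomorphic to $\R^3$ only if $\Sigma \cong \Sph^3$; so the statement you use \emph{implies} the Poincar\'{e} conjecture. Hence you must either prove that $U_r(p)$ is irreducible (your proposal does not attempt this, and it is not accessible directly), or invoke Perelman. The latter is fatal in context: Theorem \ref{thm: thurston} sits in an appendix whose stated purpose is to prove Proposition \ref{prop: volsphmfd} in dimension $3$ \emph{without} relying on the Poincar\'{e} conjecture, and Thurston's own proof of the case $r < D_{\kappa}/2$ --- which you claim to be transcribing --- is Poincar\'{e}-free: it is a decomposition-space argument, not an engulfing or simple-connectivity-at-infinity argument. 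So your sketch also mischaracterizes the method it purports to follow.

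The paper's proof is exactly the content your last paragraph defers to a citation. One considers the radial projections $c_{r,s} \colon \partial B_r(p) \to \partial B_s(p)$, $c_{r,s}(x) := \gamma_{px}(s)$, between metric spheres, which are topological $2$-spheres by Lemma \ref{lem: bms}; one shows each $c_{r,s}$ is acyclic using geodesic cones and the Alexander-Lefschetz duality, so that its fibers fail to separate $\partial B_r(p) \cong \Sph^2$ and $c_{r,s}$ is approximable by homeomorphisms; and one then applies the Daverman-Preston sliced shrinking theorem \cite{daverman-preston} to the map $f_r \colon \partial B_r(p) \times (0,r) \to U_r(p) - \{p\}$, $f_r(y,s) := c_{r,s}(y)$, to conclude that $U_r(p) - \{p\}$ is homeomorphic to $\partial B_r(p) \times (0,r) \cong \Sph^2 \times (0,r)$, hence $U_r(p) \cong \R^3$. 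None of this appears in your proposal, and verifying that precisely these steps survive for all $r \in (0,D_{\kappa})$ --- which is what Lemma \ref{lem: bms} is designed for --- is the substance of the proof. For what it is worth, the earlier parts of your outline are sound and fillable: interior points have Euclidean neighborhoods by Thurston's local theorem, and simple connectivity at infinity does follow by radially retracting $\{\, x \mid t \le d(p,x) < r \,\}$ onto $\partial B_t(p) \cong \Sph^2$; but these facts alone cannot close the argument without the Poincar\'{e} conjecture.
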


Reviewing the arguments discussed in \cite{thurston},
we sketch the proof.

Let $Y$ and $Z$ be topological spaces.
A map $f \colon Y \to Z$ is said to be
\emph{approximable by homeomorphisms},
abbreviated as $\ABH$,
if for every open covering $\mathcal{U}$ of $Z$
there exists a homeomorphism $h \colon Y \to Z$
such that for each $y \in Y$ we find $U \in \mathcal{U}$
with $f(y) \in U$ and $h(y) \in U$.
By the Daverman-Preston sliced shrinking theorem \cite[Theorem]{daverman-preston},
we already know that
if $f \colon Y \times \R \to Z$ is a proper, surjective continuous map
such that each fiber $f^{-1}(z)$ is contained in some slice $Y \times \{ t \}$,
and if each of the level maps of $f$ is $\ABH$,
then $f$ is $\ABH$ too.

Let $\tilde{H}^{\ast}$ denote
the reduced \v{C}ech cohomology with $\Z$-coefficients.
A proper surjective map $c \colon Y \to Z$
between locally compact Hausdorff spaces is said to be 
\emph{acyclic}
if $\tilde{H}^{\ast}(c^{-1}(z))$ is trivial for all $z \in Z$.

\begin{proof}[Proof of Theorem \ref{thm: thurston}]
Let $X$ be a $\CAT(\kappa)$ homology $3$-manifold.
Let $p \in X$ be arbitrary.
From Lemma \ref{lem: bms}
we see that for each $t \in (0,D_{\kappa})$
the metric sphere $\partial B_t(p)$ is homeomorphic to $\Sph^2$
since every generalized $2$-manifold is a topological $2$-manifold.

Take arbitrary $s, r \in (0,D_{\kappa})$ with $s < r$.
Let $c_{r,s} \colon \partial B_r(p) \to \partial B_s(p)$
be the continuous surjective map
defined as $c_{r,s}(x) := \gamma_{px}(s)$,
where $\gamma_{px} \colon [0,d(p,x)] \to X$ is the geodesic from $p$ to $x$.
Choose a point $z \in \partial B_s(p)$.
Let $\Gamma_z c_{r,s}^{-1}(z)$ be the geodesic cone in $X$
defined as 
\[
\Gamma_z c_{r,s}^{-1}(z) := \bigcup \{ \, zy \mid y \in c_{r,s}^{-1}(z) \, \}.
\]
By definition,
any point in $\Gamma_z c_{r,s}^{-1}(z)$ lies on a geodesic
joining $p$ and some point in $\partial B_r(p)$.
Hence $\Gamma_z c_{r,s}^{-1}(z)$ is contained in $B_r(p)$.

Observe that
$B_r(p) - \Gamma_{z}c_{r,s}^{-1}(z)$
is contractible to $p$ inside itself along the geodesics from $p$.
Following the same way as in \cite[Corollary 2.10]{thurston},
by Lemma \ref{lem: bms}
and the Alexander-Lefschetz duality for homology manifolds
(see e.g., \cite[Proposition 2.8]{thurston}),
we see that
$\tilde{H}^{\ast}(c_{r,s}^{-1}(z))$ is trivial.
Since $z$ is arbitrary in $\partial B_s(p)$,
the map $c_{r,s}$ is acyclic;
in particular,
each of the fibers of $c_{r,s}$ fails to separate the $2$-sphere $\partial B_r(p)$.
This implies that the map
$c_{r,s}$ is $\ABH$ for any $s, r \in (0,D_{\kappa})$ with $s < r$.

For a fixed $r \in (0,D_{\kappa})$,
we consider the proper, continuous surjective map
$f_r \colon \partial B_r(p) \times (0,r) \to U_r(p) - \{ p \}$
defined by $f_r(y,s) := c_{r,s}(y)$.
Each fiber $f_r^{-1}(z)$ is contained in some slice 
$\partial B_r(p) \times \{ s \}$.
Due to the Daverman-Preston sliced shrinking theorem \cite[Theorem]{daverman-preston},
we obtain a homeomorphism
between $\partial B_r(p) \times (0,r)$ and $U_r(p) - \{ p \}$,
and hence between $\R^3$ and $U_r(p)$.
Thus we conclude Theorem \ref{thm: thurston}.
\end{proof}

We give another proof of Proposition \ref{prop: volsphmfd}
in the $3$-dimensional case 
without using the resolution of the Poincar\'{e} conjecture.

\begin{proof}[Proof of Proposition \ref{prop: volsphmfd} in the $3$-dimensional case]
Suppose now that
there exists a sequence of compact $\CAT(1)$ topological $3$-manifolds
$X_i$, $i = 1, 2, \dots$,
with $\lim_{i \to \infty} \Haus^3(X_i) = (3/2) \Haus^3(\Sph^3)$
such that each $X_i$ is not homeomorphic to $\Sph^3$.
Similarly to the proof in Subsection 5.4,
we see that the sequence $X_i$, $i = 1, 2, \dots$,
has a convergent subsequence $X_j$,
$j = 1, 2, \dots$,
tending to the $3$-triplex $X$
in the Gromov-Hausdorff topology;
moreover, 
we find a pair of two points $p_1, p_2 \in X$ with $d(p_1,p_2) = \pi$
such that
$X = B_{\pi/2}(p_1) \cup B_{\pi/2}(p_2)$.
Take $p_{j,k} \in X_j$, $k = 1, 2$,
converging to $p_k \in X$ as $j \to \infty$.
Then $X_j = U_{3\pi/4}(p_{j,1}) \cup U_{3\pi/4}(p_{j,2})$,
provided $j$ is large enough.
From Theorem \ref{thm: thurston}
it follows that the balls 
$U_{3\pi/4}(p_{j,1})$ and $U_{3\pi/4}(p_{j,2})$ are homeomorphic to $\R^3$.
The generalized Schoenflies theorem (see e.g., \cite[Theorem 1.8.2]{rushing})
implies that $X_j$ is homeomorphic to $\Sph^3$,
and leads to a contradiction.
This completes the proof.
\end{proof}




\end{document}